\numberwithin{equation}{section}
\theoremstyle{plain}
\newtheorem{theorem}{Theorem}[section]
\newtheorem{lemma}[theorem]{Lemma}
\newtheorem{corollary}[theorem]{Corollary}
\newtheorem{proposition}[theorem]{Proposition}
\newtheorem{assumption}[theorem]{Assumption}
\newtheorem{remark}[theorem]{Remark}
\theoremstyle{remark}
\newtheorem*{example}{Example}
\newtheorem{definition}[theorem]{Definition}
\newtheorem{claim}[theorem]{Claim}
\newcommand{\MTkillspecial}[1]{% helper macro
	\begingroup%
	\catcode`\&=9%
	\let\\\relax%
	\scantokens{#1}%
	\endgroup%
}
\newcommand{\MTemptyplaceholder}{\:\cdot\:}
\DeclarePairedDelimiter\abs\lvert\rvert
\reDeclarePairedDelimiterInnerWrapper\abs{star}{%
	\mathopen{#1\vphantom{\MTkillspecial{#2}}\kern-\nulldelimiterspace\right.}%
	\ifblank{#2}{\MTemptyplaceholder}{#2}%
	\mathclose{\left.\kern-\nulldelimiterspace\vphantom{\MTkillspecial{#2}}#3}%
}
\DeclarePairedDelimiterXPP\iprodWrapper[3]{}{\langle}{\rangle}{#1}{
	\ifblank{#2}{\MTemptyplaceholder}{#2},
	\ifblank{#3}{\MTemptyplaceholder}{#3}
}
\NewDocumentCommand\iprod{ s o m m }{
	\IfBooleanTF {#1}
	{ \iprodWrapper*{\IfNoValueF{#2}{_{#2}}}{#3}{#4} }
	{ \iprodWrapper {\IfNoValueF{#2}{_{#2}}}{#3}{#4} }
}
\DeclarePairedDelimiterXPP\normWrapper[2]{}\lVert\rVert{#1}{\ifblank{#2}{\MTemptyplaceholder}{#2}}
\NewDocumentCommand\norm{ s o m }{
	\IfBooleanTF {#1}
	{ \normWrapper*{\IfNoValueF{#2}{_{#2}}}{#3}}
	{ \normWrapper {\IfNoValueF{#2}{_{#2}}}{#3}}
}
\providecommand\given{}
\DeclarePairedDelimiterX\Set[1]\{\}{%
	\renewcommand\given{\SetSymbol[\delimsize]}
	#1
}
\DeclarePairedDelimiterXPP\BorelSet[1]{\mathfrak{B}}(){}{%
\DeclarePairedDelimiterXPP\ACSet[1]{\ACfunc}(){}{%
\DeclarePairedDelimiterXPP\ProbWrapper[2]{#1}(){}{
	\renewcommand\given{\nonscript\:\delimsize\vert\nonscript\:\mathopen{}}
\NewDocumentCommand\prob{ s O{} O{} O{} m }{
	\ifblank {#5}{\mathrm{P}_{#2}^{#3}}
		{\IfBooleanTF {#1}
			{ \ProbWrapper*{\mathrm{P}_{#2}^{#3}}{#5} }
			{ \IfNoValueTF{#4}
				{
					\ProbWrapper{\mathrm{P}_{#2}^{#3}}{#5}
				}
				{
					\ProbWrapper[#4]{\mathrm{P}_{#2}^{#3}}{#5}
				}
			}
		}
}
\NewDocumentCommand\qrob{ s O{} O{} O{} m }{
	\ifblank {#5}{\mathrm{Q}_{#2}^{#3}}
		{\IfBooleanTF {#1}
			{ \ProbWrapper*{\mathrm{Q}_{#2}^{#3}}{#5} }
			{ \IfNoValueTF{#4}
				{
					\ProbWrapper{\mathrm{Q}_{#2}^{#3}}{#5}
				}
				{
					\ProbWrapper[#4]{\mathrm{Q}_{#2}^{#3}}{#5}
				}
			}
		}
}
\DeclarePairedDelimiterXPP\EVWrapper[2]{#1}[]{}{
	\renewcommand\given{\mathrel{}\mathclose{}\delimsize\vert\mathopen{}\mathrel{}}
	#2
}
\NewDocumentCommand\EV{ s O{} O{} O{} m }{
	\ifblank {#5}{\mathrm{E}_{#2}^{#3}}
		{\IfBooleanTF {#1}
			{ \EVWrapper*{\mathrm{E}_{#2}^{#3}}{#5} }
			{ \IfNoValueTF{#4}
				{
					\EVWrapper{\mathrm{E}_{#2}^{#3}}{#5}
				}
				{
					\EVWrapper[#4]{\mathrm{E}_{#2}^{#3}}{#5}
				}
			}
		}
}
\newcommand{\inv}{^{{\mathsmaller{-1}}}}
	\renewcommand{\phi}{\varphi}			%schöneres phi
	\renewcommand{\epsilon}{\varepsilon}	%schöneres epsilon
	\renewcommand{\theta}{\vartheta}		%schöneres theta
	\renewcommand{\Delta}{\varDelta}		%schöneres Delta
	\newcommand{\identity}{\mathbf{I}}		%identity
\DeclareMathOperator{\var}{Var}
\DeclareMathOperator{\cov}{Cov}
\newcommand{\D}{\ensuremath{\mathrm{d}}}
\renewcommand{\exp}[1]{\operatorname{exp}\left\{#1\right\}} %Exponentialfunktion
		\newcommand{\smallo}{
	  \mathchoice
	    {{\scriptstyle\mathcal{O}}}% \displaystyle
	    {{\scriptstyle\mathcal{O}}}% \textstyle
	    {{\scriptscriptstyle\mathcal{O}}}% \scriptstyle
	    {\scalebox{.7}{$\scriptscriptstyle\mathcal{O}$}}%\scriptscriptstyle
	  }	%kleines mathcal o
\begin{document}

\begin{frontmatter}
%%%%%%%%%%%%%%%%%%%%%%%%%%%%%%%%%%%%%%%%%%%%%%
%%                                          %%
%% Enter the title of your article here     %%
%%                                          %%
%%%%%%%%%%%%%%%%%%%%%%%%%%%%%%%%%%%%%%%%%%%%%%
\title{Estimation for the reaction term in semi-linear SPDEs\\
 under small diffusivity}
%\title{A sample article title with some additional note\thanksref{T1}}
\runtitle{Small diffusivity: Estimation for the reaction term}
\thankstext[]{T1}{We thank Gregor Pasemann for many fruitful discussions. Financial support by Deutsche Forschungsgesellschaft (DFG) via IRTG 2544 and CRC 1294 is gratefully acknowledged.}

\begin{aug}
%%%%%%%%%%%%%%%%%%%%%%%%%%%%%%%%%%%%%%%%%%%%%%%
%% Only one address is permitted per author. %%
%% Only division, organization and e-mail is %%
%% included in the address.                  %%
%% Additional information can be included in %%
%% the Acknowledgments section if necessary. %%
%%%%%%%%%%%%%%%%%%%%%%%%%%%%%%%%%%%%%%%%%%%%%%%
\author[A]{\fnms{Sascha} \snm{Gaudlitz}\ead[label=e1,mark]{sascha.gaudlitz@hu-berlin.de}}
\and
\author[A]{\fnms{Markus} \snm{Reiß}\ead[label=e2,mark]{mreiss@math.hu-berlin.de}}
%\and
%\author[B]{\fnms{???} \snm{???}\ead[label=e3,mark]{???@???}}
%%%%%%%%%%%%%%%%%%%%%%%%%%%%%%%%%%%%%%%%%%%%%%
%% Addresses                                %%
%%%%%%%%%%%%%%%%%%%%%%%%%%%%%%%%%%%%%%%%%%%%%%
\address[A]{Institut für Mathematik, Humboldt-Universität zu Berlin \printead{e1,e2}}

%\address[B]{???, \printead{e2,e3}}
\end{aug}

\begin{abstract}

We consider the estimation of a non-linear reaction term in the stochastic heat or more generally in a semi-linear stochastic partial differential equation (SPDE). 
%In contrast to existing approaches, we do not assume that the data are observed on growing time intervals or that the level of the driving noise does not tend to zero.
Consistent inference is achieved by studying a small diffusivity level, which is realistic in applications.
%We study the effect of a small diffusivity level, which makes inference possible and is realistic in applications.
Our main result is a central limit theorem for the estimation error of a parametric estimator, from which confidence intervals can be constructed. Statistical efficiency is demonstrated by establishing local asymptotic normality. The estimation method is extended to local observations in time and space, which allows for non-parametric estimation of a reaction intensity varying in time and space. Furthermore, discrete observations in time and space can be handled. The statistical analysis requires advanced tools from stochastic analysis like Malliavin calculus for SPDEs, the infinite-dimensional Gaussian Poincar\'e inequality and regularity results for SPDEs in $L^p$-interpolation spaces.
\end{abstract}

\begin{keyword}[class=MSC]
\kwd[Primary ]{60H15}
\kwd{62F12}
\kwd{62G05}
%\kwd[; secondary ]{???}
\end{keyword}

\begin{keyword}
\kwd{Fractional heat equation}
\kwd{Poincaré Inequality}
\kwd{LAN property}
\kwd{Maximum likelihood estimation}
\kwd{Splitting trick}
\kwd{Non-parametric estimation}
\kwd{Discrete observations}
\end{keyword}

\end{frontmatter}
%%%%%%%%%%%%%%%%%%%%%%%%%%%%%%%%%%%%%%%%%%%%%%
%% Please use \tableofcontents for articles %%
%% with 50 pages and more                   %%
%%%%%%%%%%%%%%%%%%%%%%%%%%%%%%%%%%%%%%%%%%%%%%
%\tableofcontents

%%%%%%%%%%%%%%%%%%%%%%%%%%%%%%%%%%%%%%%%%%%%%%
%%%% Main text entry area:

\section{Introduction}\label{sec:Intro}

\subsection{\textbf{Overview}}

While theory and also numerics for stochastic partial differential equations (SPDEs) have advanced significantly over the last years (see e.g.\ \cite{Hairer2009} and \cite{Lord2014} for a comprehensive account), the statistical methodology is less developed and some core  statistical questions are still unresolved. Statistics is not only vital for modeling with SPDEs in concrete applications, but poses also challenging mathematical problems for the analysis of SPDEs. Consider the following class of semi-linear SPDEs
	\begin{equation}
		\begin{cases}
			\D X_t = \nu A X_t\,\D t + \theta F(X_t)\,\D t +  B\D W_t,&t\in[0,T],\\
			X_0\quad\mathcal{F}_0\text{-measurable},
		\end{cases}\label{eq:SPDE}
	\end{equation}
on a filtered probability space $(\Omega,\mathcal{F},(\mathcal{F}_t)_{t\ge 0},\prob*{})$ supporting a cylindrical Brownian motion $(W_t)_{t\ge 0}$ with values in $\mathcal{H}\coloneqq L^2(\Lambda)$, where $\Lambda\subset{\mathbb R}^d$ is an open, bounded domain, $A\colon \operatorname{dom}(A)\subset \mathcal{H}\to \mathcal{H}$ is the generator of a semi-group $(S_t)_{t\ge 0}$ on $\mathcal{H}$, $F\colon \mathcal{H}\to\mathcal{H}$ is a continuous, possibly non-linear reaction term and $B\colon \mathcal{H}\to \mathcal{H}$ is a bounded linear dispersion operator. Our aim is to estimate the parameter $\theta\in\mathbb{R}$ for a small diffusivity level $\nu>0$. The terminology of \emph{diffusion} and \emph{reaction} term originates from the specific case of the stochastic reaction-diffusion or semi-linear heat equation with the Laplace operator $A=\Updelta$ in \eqref{eq:SPDE}.

Estimation of the diffusivity parameter $\nu$ has been extensively studied. Using the spectral decomposition of $A$,
\cite{Huebner1993, Huebner1995} have developed a general spectral  method for linear equations. It allows to estimate $\nu$ consistently from the dynamics of spectral statistics (Fourier modes), even from observations with a finite time horizon $T$. The spectral estimation method has subsequently been extended to cover also non-linear SPDEs like the stochastic Navier-Stokes equation \cite{Cialenco2011} and more general semi-linear equations \cite{Pasemann2020, Pasemann2021}. The non-parametric estimation of a space-dependent diffusivity $\nu$, based on local measurements only, has been pursued by \cite{Altmeyer2021, Altmeyer2020} and by \cite{Altmeyer2020b} with application to a concrete SPDE model for cell motility data. In the case $B=\sigma_t \identity$, that is driving space-time white noise of unknown time-dependent level $\sigma_t>0$, \cite{Bibinger2020} have used the realised volatility, based on high-frequency observations in time, to estimate $\sigma_t$. An estimator based on multipower variations in \cite{Chong2020} allows also to estimate a space-time dependent $\sigma$ from high-frequency observations. These approaches have been successfully extended to estimating $\nu$, $\sigma$ \cite{Markussen2003, Cialenco2020, Chong2020} and furthermore also $\theta F$ if, additionally, the time horizon $T$ increases to infinity \cite{Hildebrandt2021b}. To the best of our knowledge, the only estimation of the non-linear reaction intensity $\theta$ for a fixed time horizon $T$, has been conducted in the series of papers \cite{Ibragimov1999, Ibragimov2000, Ibragimov2001, Ibragimov2003} for the small-noise regime $\sigma\to 0$, where the limiting SPDE becomes deterministic. For a more thorough review of statistical methods for SPDEs we refer to Chapter 6 of \cite{Lototsky2017} and the excellent survey \cite{Cialenco2018}.

A consequence of the theory in \cite{Huebner1995} is that usually $\theta$ cannot be estimated consistently even from full space-time observations, when $T$, $\nu$ and $\sigma$ are fixed, e.g. in the case of the Laplace operator $A=\Updelta$ and $B=\sigma\identity$ in $d=1$.  Long time asymptotics $T\to\infty$ require ergodicity properties of the solution process $X$, which are difficult to verify and furthermore not satisfied for several models of interest. Yet, parabolic SPDE models in applications often focus on lower order transport or reaction terms which describe the fundamental dynamics and become superposed with diffusion phenomena and random fluctuations. In concrete examples, the diffusivity $\nu$ will mostly be of the order of the reaction term \cite{Soh2010} or  even smaller \cite{Alonso2018, Flemming2020, Altmeyer2020b}, so that the reaction term has a clear impact on the dynamics. From a statistician's point of view this indicates that inference on the reaction parameters must be feasible.

We corroborate this view by constructing an estimator $\hat{\theta}_\nu$ and by showing that its estimation error for fixed $T$ and $B$ is small for small diffusivity $\nu$. More precisely, we derive a central limit theorem (CLT) for the scaled estimation error $\phi(\nu)^{1/2}(\hat{\theta}_\nu-\theta)$ as $\nu\to 0$ with a convergence rate $\phi(\nu)^{-1/2}$ depending on the generator $A$, e.g.\ $\phi(\nu)^{-1/2}\sim\nu^{1/4}$ for the one-dimensional Laplace operator $\Updelta$. Consequently, we can quantify the uncertainty of the estimator $\hat\theta_\nu$ by explicit and data-driven confidence intervals. The rate of converge is determined by the growth of the spatial $L^2$-norm of $X$ as $\nu \to 0$. Moreover, we are able to establish the LAN property (local asymptotic normality) of our model with the same convergence rate and variance, showing that our estimator is statistically efficient. The estimator is based on the maximum-likelihood method and uses the full spatio-temporal observation $(X_t(y),t\in[0,T],y\in\Lambda)$. Simulation results confirm the theoretical findings and exhibit good finite sample behaviour.

We are able to extend the results to less informative observation schemes. First, we consider observations of $X$ in a small spatio-temporal domain and show that estimation is still feasible. The convergence rate scales with the size of the spatio-temporal domain tending to zero. These local observations then even  permit to estimate a space-time varying parameter $\theta(y,t)$ non-parametrically. We find standard pointwise estimation rates over anisotropic H\"older classes. Finally, we treat discrete observations in space and time. We propose a discretised estimator and derive sufficient conditions on the discretization mesh to ensure the same asymptotics as for $\hat\theta_\nu$.

While the derivation of the maximum-likelihood estimator (MLE) relies on a proper analysis of the Girsanov theorem for SPDEs, the analysis of the estimation error is non-trivial. We establish the CLT by martingale methods, for which we have to control the fluctuations of the observed Fisher information around its expectation. To this end, we employ Malliavin calculus for SPDEs and the infinite-dimensional Poincar\'e inequality on one hand and the Da Prato-Debussche splitting trick to control the non-linear part of the semi-linear SPDE on the other hand. The local observation scheme requires uniform control of certain statistics, as functions of time and space, which is achieved by corresponding  properties of the semi-group and its Green function as well as by Malliavin calculus again. The handling of discrete observations calls for regularity results in $L^p$-interpolation and $L^p$-Sobolev spaces to give a tight control of the discretization error as $\nu\to 0$. These have not yet been explored in the literature and might be of independent interest.

In Subsection \ref{subsec:Setting} we present the exact setting and assumptions as well as the fractional heat equation, which serves as our guiding example in the sequel. Section \ref{sec:MainResults} constructs the MLE $\hat{\theta}_\nu$, presents its asymptotics and discusses its efficiency. The practical implementation and numerical examples are discussed in Subsection \ref{subsec:Numerics}. We will  explain the main steps for proving the CLT in Subsection \ref{subsec:towardsCLT}. Reduced observation schemes are treated in Section \ref{sec:discretize}, allowing for parametric estimation from local observations, non-parametric estimation via localisation and parametric estimation from discrete observations. Most proofs are postponed to Appendices \ref{subsec:Proof_Lemma_Example}--\ref{sec:TechnicalTool}.

\subsection{\bf{Setting and guiding example}}\label{subsec:Setting}

	For functions $u,v\colon (0,\infty)\to\mathbb{R}$ we write $u\lesssim v$ (equivalently $v\gtrsim u$) if $\limsup_{\nu\to 0}u(\nu)/v(\nu)<\infty$ and $u\sim v$ if $u\lesssim v$ and $u\gtrsim v$.
%We make use of the standard $\mathcal{O}_{\prob*{}}$ and $\smallo_{\prob*{}}$ notation of \cite{Vaart1998}.
	
	Let $\Lambda\subset\mathbb{R}^d$ be an open and bounded domain with Lipschitz boundary. Define $\mathcal{H}\coloneqq L^2(\Lambda)$ with inner product $\iprod*{}{}$ and assume that $A\colon \operatorname{dom}(A)\subset\mathcal{H}\to\mathcal{H}$ is a densely defined operator generating an analytic contraction semi-group $(S_t)_{t\ge 0}$ on $\mathcal{H}$. Then the semi-group generated by $\nu A$ is given by $(S_{\nu t})_{t\ge 0}$. We denote by $\norm*{}_{\operatorname{HS}}$ the Hilbert-Schmidt norm and by $\norm*{}$ the spectral (operator) norm of operators on $\mathcal{H}$. $F\colon\mathcal{H}\to\mathcal{H}$ is a, possibly non-linear, operator and $B\colon\mathcal{H}\to\mathcal{H}$ is a bounded linear operator with bounded inverse $B\inv$. For a parameter $\theta\in\mathbb{R}$ we consider the semi-linear SPDE \eqref{eq:SPDE}, assuming $\EV{e^{\norm{X_0}^2}}<\infty$ for the initial condition.  In Subsection \ref{subsec:Nonparametric} we consider a space-time dependent parameter $\theta(y,t)$, given by a locally Hölder-continuous function on $\Lambda\times [0,T]$. In this case we understand $[\theta_t F(z)](y)=\theta(y,t)[F(z)](y)$, $z\in L^2(\Lambda)$, $(y,t)\in\Lambda\times[0,T]$.
	
	Besides these standing requirements, we proceed to state a collection of assumptions that will be used in the following. We will state for every result individually, which of these assumptions are necessary. Lemma \ref{lem:examples_Assumptions} (below), proven in Appendix \ref{subsec:Proof_Lemma_Example}, shows that one important class of SPDEs for which the following Assumptions hold is given by the following example. It will be considered throughout the presentation.
	
	\begin{example}[Fractional heat equation]%\label{examp:Main}
	Fix some $l>0$, let $\Lambda=(0,l)$ and consider the semi-linear SPDE
	\begin{equation}
		\begin{cases}
			\D X_t(y) = -\nu(-\Updelta)^{\alpha/2}X_t(y)\,\D t + \theta f(X_t(y))\,\D t + \sigma(y)\D W_t(y),& t\in[0,T],\\
			X_0\equiv 0,&
		\end{cases}\label{eq:Guiding_Example}
	\end{equation}
	for $\theta\in\mathbb{R}$. $(-\Updelta)^{\alpha/2}$, $\alpha\in(1,2]$, is the (spectral) fractional Laplacian with Dirichlet boundary conditions, defined by
	\begin{equation*}
		(-\Updelta)^{\alpha/2}z \coloneqq \sum_{k=1}^\infty\lambda_k^{\alpha/2}\iprod*{z}{e_k}e_k
	\end{equation*}
	for any $z\in L^2(\Lambda)$ providing convergence. $(\lambda_k,e_k)_{k\in\mathbb{N}}$ is the singular value decomposition of $-\Updelta$ on $\Lambda$, i.e.\ $e_k(x) = \sqrt{2/l}\sin(k\pi x/l)$ and $\lambda_k= (k\pi)^2/l^2$. $B$ is a multiplication operator $Bz = \sigma z$, $z\in L^2(\Lambda)$, where $\sigma\colon \Lambda\to\mathbb{R}$ is smooth, bounded and $\inf_\Lambda\sigma>0$. Additionally, $F(z)=f\circ z$, $z\in L^2(\Lambda)$, where $f\colon\mathbb{R}\to\mathbb{R}$  has a bounded, Lipschitz-continuous derivative.
\end{example}

We postulate well-posedness of \eqref{eq:SPDE}:

\begin{assumption}\label{assump:general}
		The semi-linear SPDE \eqref{eq:SPDE} has a unique weak solution $X=(X_t)_{t\in[0,T]}$ with paths in $C([0,T],L^2(\Lambda))$, satisfying the variation-of-constants formula
		\begin{equation*}
			X_t = S_{\nu t}X_0 + \int_0^t S_{\nu(t-s)}\theta F(X_s)\,\D s + \int_0^t S_{\nu(t-s)}B\,\D W_s.
		\end{equation*}
\end{assumption}

	 We refer to Theorems 7.2 and 7.5 of \cite{DaPrato2014} for general conditions under which Assumption \ref{assump:general} is satisfied.
	
\begin{definition}
	For $\nu>0$ and $t\in[0,T]$ we set $\phi(\nu,t)\coloneqq \int_0^t \norm*{S_{\nu s}}_{\operatorname{HS}}^2\,\D s$ and $\phi(\nu):=\phi(\nu,T)$, which will later describe the convergence rate of the estimator.
\end{definition}

	\begin{remark}\label{rmk:beta_growth}
$\phi(\nu,t)$ is increasing in $t$ and $\phi(\nu,T)<\infty$ holds for all $\nu>0$ under Assumption \ref{assump:general}. Because of $\norm*{S_{\nu t}}_{\operatorname{HS}}\to\infty$ for $\nu\to 0$, we have $\phi(\nu,t)\to\infty$ as $\nu\to 0$. As $(S_t)_{t\ge 0}$ is a contraction semi-group, $t\mapsto\norm*{S_t}_{\operatorname{HS}}$ is decreasing. This gives for all $0<t_1\le t_2\le T$, $\nu>0$, the two-sided bound
		\begin{equation}
			\phi(\nu,t_1)\le \phi(\nu,t_2)\le (t_2/t_1)\phi(\nu,t_1),\label{eq:compatibility}
		\end{equation}
		which implies in particular $\phi(\nu,t)\thicksim \phi(\nu)$ for fixed $t>0$.
		% The identity
		%$			\phi(\nu,t) = \nu^{-1}\int_0^{\nu t}\norm*{S_s}_{\operatorname{HS}}^2\,\D s$
		%shows that $\phi$ is jointly continuous in $(\nu,t)$.\todo{needed???}

		%\item %If $(-A)^{-\rho}$ is Hilbert-Schmidt for some $\rho>0$, then we have for any $s\in[0,T]$ that
		%\begin{equation}
		%	\norm*{S_{\nu s}}_{\operatorname{HS}} \le \norm{(-A)^{-\rho}}_{\operatorname{HS}}\norm{(-A)^{\rho}S_{\nu s}}\le  C\norm{(-A)^{-\rho}}_{\operatorname{HS}}(\nu s)^{-\rho}\label{eq:HSTrick}
		%\end{equation}
		%for some constant $C>0$ by Proposition 4.37 of \cite{Hairer2009}. If $\rho<1/2$, then $\phi(\nu,t)\lesssim \nu^{-2\rho}t^{1-2\rho}$, $t\in[0,T]$.

\end{remark}

	 As we shall use Malliavin calculus to control certain statistics, we denote by $\mathbb{H}$ the Hilbert space $L^2(\Lambda)$ with the weighted inner product $\iprod*{}{}_{\mathbb{H}}\coloneqq\iprod*{B^\ast\MTemptyplaceholder}{B^\ast\MTemptyplaceholder}$. We can interpret $(B\D W_t)_{t\in[0,T]}$ as an isonormal process $\mathcal{W}$ on $\mathfrak{H}\coloneqq L^2([0,T],\mathbb{H})$. The Malliavin derivative $\mathcal{D}Z$ of a Malliavin differentiable random variable $Z$ based on $\mathcal{W}$ belongs to $\mathfrak{H}$ and we use the notation $\mathcal{D}Z = (\mathcal{D}_\tau Z)_{\tau\in[0,T]}$ with $\mathcal{D}_\tau Z\in\mathbb{H}$.

The following assumption paves the ground for connecting $\int_0^T S_{\nu(t-s)}B\,\D W_s$ to the Malliavin calculus. The condition for $A$ is satisfied e.g.\ if $A$ is the generator of a Markov process with transition density $P_t(x,y)$, in which case $G_t(x,y) = P_t(x,y)$.
	\begin{assumption}~\label{assump:generalA}
		The semi-group $S_t$ takes the form $S_t z(y)=\iprod*{G_t(y,\MTemptyplaceholder)}{z}$, $z\in L^2(\Lambda)$, $y\in\Lambda$, $t\in (0,T]$, with kernel (Green function) $G_{\MTemptyplaceholder}(y,\MTemptyplaceholder)\in\mathfrak{H}$ for all $y\in \Lambda$. The operator $B$ takes the multiplicative form $Bz = \sigma z$, $z\in L^2(\Lambda)$, for some bounded $\sigma\colon\mathbb{R}\to\mathbb{R}_+$ such that $1/\sigma$ is bounded.
\end{assumption}

\begin{remark}\label{rmk:Skorokhod_Intergal}
Define the Skorokhod integral
		\begin{equation*}
			\int_0^t\int_\Lambda G_{\nu(t-s)}(y,\eta)\mathcal{W}(\D s,\D\eta)\coloneqq \delta(G_{\nu(t-\MTemptyplaceholder)}(y,\MTemptyplaceholder)),
		\end{equation*}
		where $\delta$ is the divergence operator of Malliavin calculus, see e.g.\ \cite{Nualart2009}. Then
		\begin{equation*}
			\int_0^t S_{\nu(t-s)}B\,\D W_s = \int_0^t\int_\Lambda G_{\nu(t-s)}(\MTemptyplaceholder,\eta)\mathcal{W}(\D s,\D\eta)
		\end{equation*}
		holds with equality in $L^2(\Omega,C([0,T],L^2(\Lambda)))$, see e.g.\ Propositions 4.3 and 4.4 of \cite{SanzSole2013} for a comparison of Da Prato/Zabczyk-type stochastic integrals and Skorokhod integrals.
\end{remark}

In order to control fluctuations of $X_t$ and to apply later the Poincaré inequality, we require standard properties of the Malliavin derivative of the point evaluation $X_t(y)$.
	\begin{assumption}\label{assump:specific_F}
 		$F$ is a Nemytskii-type operator in the sense that there exists a function $f\in C^1(\mathbb{R})$ with $\norm*{f'}_\infty\coloneqq\sup_{\Lambda}\abs*{f'}<\infty$ such that $F(z)=f\circ z$ for all $z\in L^2(\Lambda)$. $G_t$, $f$ and the initial condition $X_0$ are such that the unique weak solution $X$ to the SPDE \eqref{eq:SPDE} allows for a random field representation
		\begin{equation*}
			X_t(y) = S_{\nu t}X_0(y) + \int_0^t \int_\Lambda G_{\nu(t-s)}(y,\eta)f(X_s(\eta))\,\D\eta\D s + \int_0^t\int_\Lambda G_{\nu(t-s)}(y,\eta)\mathcal{W}(\D s,\D\eta)
		\end{equation*}
		with equality in $L^2(\Omega,C([0,T],L^2(\Lambda)))$. Furthermore, for all $(y,t)\in \Lambda\times[0,T]$, the Malliavin derivative $\mathcal{D}X_t(y)\in\mathfrak{H}$ of $X_t(y)$ exists and satisfies
		\begin{equation*}
			\mathcal{D}_\tau X_t(y) = G_{\nu(t-\tau)}(y,\MTemptyplaceholder)+\int_0^t \int_\Lambda G_{\nu(t-s)}(y,\eta)f'(X_s(\eta))\mathcal{D}_\tau X_s(\eta)\,\D\eta\D s
		\end{equation*}
		for $\tau\in [0,t)$ and $\mathcal{D}_\tau X_t(y)=0$ for $\tau\in [t,T]$.
\end{assumption}
Assumption \ref{assump:specific_F} implies Assumption \ref{assump:general}. There are many cases known in the literature where Assumption \ref{assump:specific_F} holds, compare \cite{SanzSole2005}.

\begin{lemma}\label{lem:examples_Assumptions}
	The fractional heat equation  \eqref{eq:Guiding_Example} satisfies Assumptions \ref{assump:generalA}, \ref{assump:specific_F} and therefore also Assumption \ref{assump:general}. We have
	\begin{equation*}	
		\phi(\nu,t) \sim \nu^{-1/\alpha}t^{1-1/\alpha},\quad \nu>0,\,t\in[0,T].
	\end{equation*}
\end{lemma}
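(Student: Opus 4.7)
The plan is to verify Assumption~\ref{assump:generalA} and Assumption~\ref{assump:specific_F} in turn (which entails Assumption~\ref{assump:general} as noted right after its statement), and then to compute the sharp asymptotics of $\phi$ via the spectral representation of the fractional Laplacian.

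For Assumption~\ref{assump:generalA}, I would exploit that $(-\Updelta)^{\alpha/2}$ is diagonal in the basis $(e_k)_{k\ge 1}$, so that $-(-\Updelta)^{\alpha/2}$ generates an analytic contraction semi-group with spectral representation
\begin{equation*}
S_t z = \sum_{k=1}^\infty e^{-\lambda_k^{\alpha/2} t}\iprod*{z}{e_k} e_k,
\end{equation*}
giving the kernel $G_t(y,\eta)=\sum_k e^{-\lambda_k^{\alpha/2} t}e_k(y)e_k(\eta)$. Since $\inf_\Lambda\sigma>0$ and $\sigma$ is bounded, $\norm*{\cdot}_{\mathbb H}$ is equivalent to $\norm*{\cdot}_{L^2(\Lambda)}$ and Parseval together with $\sup_k\norm*{e_k}_\infty\le\sqrt{2/l}$ and $\lambda_k\sim k^2$ yields
\begin{equation*}
\int_0^T \norm*{G_t(y,\cdot)}_{\mathbb H}^2\,\D t \lesssim \sum_{k\ge 1}\frac{1-e^{-2\lambda_k^{\alpha/2}T}}{2\lambda_k^{\alpha/2}}\,e_k(y)^2 \lesssim \sum_{k\ge 1}k^{-\alpha}<\infty,
\end{equation*}
uniformly in $y\in\Lambda$, because $\alpha>1$. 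The multiplicative form of $B$ is built into the definition of the example.

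For Assumption~\ref{assump:specific_F}, the hypothesis that $f\in C^1(\mathbb R)$ has a bounded Lipschitz-continuous derivative gives $\norm*{f'}_\infty<\infty$ and makes $F$ globally Lipschitz on $\mathcal H$, so $F$ is a Nemytskii operator of the required form. The random-field version of the mild solution and the linear Volterra equation for $\mathcal{D}_\tau X_t(y)$ then follow from standard existence/regularity theorems for parabolic SPDEs driven by multiplicative noise of multiplier type with a square-integrable Green kernel, combined with Malliavin-chain-rule arguments; I would invoke the framework of Chapter~3 of \cite{SanzSole2005}, the only non-trivial input being the integrability of $G_t$ established in the previous paragraph.

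Finally, for the asymptotics of $\phi(\nu,t)$, Parseval gives $\norm*{S_{\nu s}}_{\operatorname{HS}}^2=\sum_{k\ge 1}e^{-2\lambda_k^{\alpha/2}\nu s}$, hence
\begin{equation*}
\phi(\nu,t)=\sum_{k=1}^\infty\frac{1-e^{-2\lambda_k^{\alpha/2}\nu t}}{2\lambda_k^{\alpha/2}\nu}.
\end{equation*}
An elementary computation shows that $u\mapsto (1-e^{-u})/u$ is strictly decreasing on $(0,\infty)$, so the summands are monotone decreasing in $k$. Substituting $y=k(\nu t)^{1/\alpha}$ with $\lambda_k=(k\pi/l)^2$ yields, after Riemann sandwiching,
\begin{equation*}
\phi(\nu,t)\sim \nu^{-1/\alpha}\,t^{1-1/\alpha}\int_0^\infty\frac{1-e^{-2(\pi/l)^\alpha y^\alpha}}{2(\pi/l)^\alpha y^\alpha}\,\D y,
\end{equation*}
and the integral on the right is finite precisely because its integrand is bounded near $0$ and decays like $y^{-\alpha}$ at infinity, with $\alpha>1$. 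The main obstacle is invoking the random-field and Malliavin-derivative representation cleanly for this kernel with the required uniform-in-$y$ integrability; the rate computation is then the main explicit calculation, but is routine once the scaling substitution is set up.
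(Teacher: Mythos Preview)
Your proposal is correct and follows essentially the same route as the paper: verify Assumption~\ref{assump:generalA} via the explicit spectral kernel, verify Assumption~\ref{assump:specific_F} by appealing to the Picard-iteration framework of \cite{SanzSole2005}, and compute $\phi(\nu,t)$ from the spectral sum. Two small points of comparison are worth noting.

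For the asymptotics of $\phi(\nu,t)$, the paper is slightly more direct: it writes $\int_0^t e^{-2\mu_k\nu s}\,\D s\sim(\nu k^\alpha)^{-1}\wedge t$ and then estimates $\sum_k((\nu k^\alpha)^{-1}\wedge t)$ by splitting at $k\sim(\nu t)^{-1/\alpha}$. Your Riemann-sum argument with the substitution $y=k(\nu t)^{1/\alpha}$ is equally valid and yields the same scaling with an explicit limiting constant; the monotonicity you note makes the sandwiching rigorous.

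For Assumption~\ref{assump:specific_F}, the paper uses a bit more than square-integrability of $G$: it invokes that $(-\Updelta)^{\alpha/2}$ is the generator of a subordinated stopped Brownian motion, so that $G_{\nu t}(x,\cdot)\,\D y$ is a sub-probability measure (non-negative, total mass $\le 1$), together with the pointwise bound $\sup_x\norm*{G_{\nu t}(x,\cdot)}^2\le (2/l)\norm*{S_{\nu t}}_{\operatorname{HS}}^2$. These two properties are what feed into Theorems~6.2 and~7.1 of \cite{SanzSole2005} for the random-field representation and the Malliavin-derivative equation. Your statement that ``the only non-trivial input [is] the integrability of $G_t$'' slightly undersells this; you should also record the sub-Markov property of the kernel, which is immediate from the probabilistic interpretation but is genuinely used in the Picard argument.
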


In order to quantify the estimation error, we will impose a two-sided growth bound on $F$.
	\begin{assumption}\label{assump:general_F}
	There exist constants $0<d\le D<\infty$ such that
	\begin{equation*}
		d\norm*{z} \le \norm*{F(z)}\le D(\norm*{z} + 1),\quad z\in L^2(\Lambda).
	\end{equation*}
	\end{assumption}
	\begin{example}[continued]
		In the case of the fractional heat equation  \eqref{eq:Guiding_Example}, Assumption \ref{assump:general_F} is satisfied if we additionally assume  $\abs{f(y)}\ge d\abs*{y}$ for all $y\in\mathbb{R}$.
	\end{example}
	
\section{Main results}\label{sec:MainResults}

	Let us state our main results starting with the derivation of the MLE $\hat{\theta}_\nu$ for $\theta$ in \eqref{eq:SPDE}. The MLE satisfies a CLT which allows to construct confidence intervals. Moreover, the MLE is shown to be an asymptotically efficient estimator. A numerical example illustrates the finite sample performance and discusses further issues. The general strategies of proof are exposed, while the technical proof details are postponed to Appendix \ref{sec:Proofs_Main}.

\subsection{\textbf{Construction of the estimator and the CLT}}

	The  Girsanov Theorem allows  to construct a maximum-likelihood estimator $\hat{\theta}_\nu$. To this end, denote by $\prob*[\theta]{}$ the law of the solution $(X_t)_{t\in[0,T]}$ of the SPDE \eqref{eq:SPDE} on the path space $C([0,T],\mathcal{H})$ with its Borel $\sigma$-algebra.
	\begin{lemma}\label{lem:MLE_decomp}
		Grant Assumption \ref{assump:general}. The MLE $\hat{\theta}_\nu$ for $\theta$ in the semi-linear SPDE \eqref{eq:SPDE} exists and is given by
		\begin{equation*}
			\hat{\theta}_\nu\coloneqq \frac{\int_0^T \iprod*{B\inv F(X_t)}{B\inv[\D X_t - \nu AX_t\,\D t]}}{\int_0^T \norm*{B\inv F(X_t)}^2\,\D t},
		\end{equation*}
		provided the denominator does not vanish. We understand the numerator in the sense of the $L^2(\prob*[0]{})$-convergent series
		\begin{align*}
			\int_0^T \iprod*{B\inv F(X_t)}{B\inv[\D X_t - \nu AX_t\,\D t]}\,\D t \coloneqq\hspace{-15em}&\\
				&\sum_{k=1}^\infty \left[ \int_0^T \iprod*{B\inv F(X_t)}{u_k}\D\iprod*{u_k}{B\inv X_t}-\nu\int_0^T \iprod*{B\inv F(X_t)}{u_k}\iprod*{X_t}{A^\ast B\inv u_k}\,\D t\right]
		\end{align*}
		for any complete orthonormal system $(u_k)_{k\in\mathbb{N}}\subset\operatorname{dom}(A^\ast B\inv)$ of $\mathcal{H}$.
		
		$\hat{\theta}_\nu$ satisfies under $\prob*[\theta]{}$ the representation
		\begin{equation}
			\hat{\theta}_\nu = \theta + \frac{\mathcal{M}_\nu}{\mathcal{I}_\nu},\label{eq:MLE_decmop}
		\end{equation}
		with $\mathcal{M}_\nu\coloneqq \int_0^T\iprod*{B\inv F(X_t)}{\D W_t}$ and $\mathcal{I}_\nu\coloneqq\int_0^T \norm*{B\inv F(X_t)}^2\,\D t$.
	\end{lemma}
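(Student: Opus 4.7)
The plan is to apply Girsanov's theorem on the path space $C([0,T],\mathcal{H})$, differentiate the resulting log-likelihood in $\theta$, and finally substitute the $\mathbb{P}^\theta$-innovation representation to obtain \eqref{eq:MLE_decmop}.

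Under the reference measure $\mathbb{P}^0$ (i.e.\ $\theta=0$) the SPDE reads $\D X_t = \nu A X_t\,\D t + B\,\D W_t$, so the driving noise is recovered via $\D W_t = B^{-1}(\D X_t - \nu A X_t\,\D t)$. Since $B^{-1}$ is bounded, the drift shift $\theta F(X_t)$ lies in the range of $B$, and the infinite-dimensional Girsanov theorem (cf.\ Thm.~10.14 in Da Prato--Zabczyk) yields
\begin{equation*}
\frac{\D \mathbb{P}^\theta}{\D \mathbb{P}^0} = \exp\!\Big\{\theta \int_0^T \langle B^{-1} F(X_t),\D W_t\rangle - \tfrac{\theta^2}{2}\int_0^T \|B^{-1} F(X_t)\|^2\,\D t\Big\},
\end{equation*}
with the requisite Novikov-type condition secured by a standard localization in $t$ together with path continuity of $X$ from Assumption \ref{assump:general}. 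Differentiating the log-likelihood in $\theta$ and setting the derivative to zero gives the score equation $\theta \,\mathcal{I}_\nu = \mathcal{M}_\nu$, hence
\begin{equation*}
\hat\theta_\nu = \frac{\int_0^T \langle B^{-1} F(X_t),\D W_t\rangle}{\int_0^T \|B^{-1} F(X_t)\|^2\,\D t};
\end{equation*}
formally replacing $\D W_t$ by $B^{-1}(\D X_t - \nu A X_t\,\D t)$ gives the expression in the statement.

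To make the numerator rigorous when $X_t \notin \operatorname{dom}(A)$, I would expand in a CONS $(u_k)\subset\operatorname{dom}(A^\ast B^{-1})$ and invoke Parseval under the Itô integral. Testing the mild formulation of \eqref{eq:SPDE} against $B^{-1} u_k$ and using the duality $\langle B^{-1} u_k, A X_s\rangle = \langle A^\ast B^{-1} u_k, X_s\rangle$---for which the condition $B^{-1} u_k \in \operatorname{dom}(A^\ast)$ is precisely what is needed---yields the scalar semimartingale identity
\begin{equation*}
\D\langle u_k, B^{-1} X_t\rangle = \nu \langle X_t, A^\ast B^{-1} u_k\rangle\,\D t + \theta\, \langle u_k, B^{-1} F(X_t)\rangle\,\D t + \D\langle u_k, W_t\rangle.
\end{equation*}
Under $\mathbb{P}^0$ the $\theta$-term vanishes, so substituting the resulting expression for $\D\langle u_k,W_t\rangle$ into $\int_0^T \langle B^{-1} F(X_t), u_k\rangle\,\D\langle u_k, W_t\rangle$ and summing over $k$ reproduces the displayed series. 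The convergence in $L^2(\mathbb{P}^0)$ follows from Itô's isometry and Parseval:
\begin{equation*}
\mathbb{E}^0 \Big| \sum_{k=N}^{M} \int_0^T \langle B^{-1} F(X_t), u_k\rangle\,\D\langle u_k, W_t\rangle\Big|^2 = \sum_{k=N}^{M}\mathbb{E}^0\int_0^T \langle B^{-1} F(X_t), u_k\rangle^2\,\D t \xrightarrow[N,M\to\infty]{} 0,
\end{equation*}
where $\mathbb{E}^0\int_0^T \|B^{-1} F(X_t)\|^2\,\D t < \infty$ after the same localization.

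For the decomposition \eqref{eq:MLE_decmop} the innovation under $\mathbb{P}^\theta$ becomes $B^{-1}(\D X_t - \nu A X_t\,\D t) = \theta B^{-1} F(X_t)\,\D t + \D W_t$ with the $\mathbb{P}^\theta$-Brownian motion $W$; plugging this into the MLE formula splits the numerator as $\theta \mathcal{I}_\nu + \mathcal{M}_\nu$, and division by $\mathcal{I}_\nu$ gives the claim. The main obstacle is the rigorous interpretation of the numerator via the series: establishing the $L^2(\mathbb{P}^0)$-convergence uniformly over any CONS in $\operatorname{dom}(A^\ast B^{-1})$ and verifying the duality identity across $B^{-1}$; the Girsanov step and the algebra leading to \eqref{eq:MLE_decmop} are then routine.
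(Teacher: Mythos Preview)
Your approach is essentially the same as the paper's: apply the infinite-dimensional Girsanov theorem (the paper invokes Theorem~10.18 and Remark~10.19 of Da~Prato--Zabczyk rather than 10.14, which already handles the exponential-martingale issue you address via localisation), write out the Radon--Nikodym density, maximise in $\theta$, make the numerator rigorous by expanding in a CONS satisfying the domain condition, and then substitute the $\prob*[\theta]{}$-innovation $\tilde W_t=W_t+\theta\int_0^t B^{-1}F(X_s)\,\D s$ to obtain \eqref{eq:MLE_decmop}. Your treatment of the series convergence via It\^o isometry and Parseval is in fact more explicit than the paper's, which simply states ``coefficient-wise computations show with convergence in $L^2(\prob*{})$''.
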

%	\begin{proof}
%		See Section \ref{subsec:proof_MLE}.
%	\end{proof}
	\begin{remark}~
		\begin{enumerate}[(i)]
			\item We must analyse the properties of the MLE $\hat{\theta}_\nu$ under the data-generating law $\prob*[\theta]{}$. As the Girsanov Theorem ensures $\prob*[\theta]{}\ll\prob*[0]{}$, the series representation of the numerator also converges in $\prob*[\theta]{}$-probability, so that the MLE is well defined. The value of $\theta$ is fixed throughout the work and we will often omit the index $\theta$.
			\item The term $\mathcal{I}_\nu$ is commonly called observed Fisher information, noting that $\EV*{\mathcal{I}_\nu}$ is the classical Fisher information. Note already that $\mathcal{I}_\nu$ is the quadratic variation of the martingale $\mathcal{M}_\nu$ in $T$.
			%\item In Section \ref{sec:discretize} we will provide estimators $\hat{\theta}_{\nu,\delta_t ,\delta_y}^{\operatorname{disc}}$ and $\hat{\theta}_{\delta_y}$ of $\theta$ that only rely on discrete observations in time and space or continuous observations in time of an $\epsilon$-strip in space, respectively, and asymptotically retain  the properties of the MLE $\hat{\theta}$.
			\item The proof of Lemma \ref{lem:MLE_decomp} also shows that the log-likelihood $\ell$ under $\prob*[\theta]{}$ satisfies
			\begin{equation*}
				\ell(X,\theta)\overset{d}{=} \int_0^T \theta \iprod*{B\inv F(X_t)}{\D W_t} + \frac{1}{2}\int_0^T\theta^2 \norm*{B\inv F(X_t)}^2\,\D t.
			\end{equation*}
		\end{enumerate}
	\end{remark}
	
	The following CLT provides the main result on the estimation error of the MLE $\hat{\theta}_\nu$.
	\begin{theorem}\label{thm:CLTnon-linear}
	Grant Assumptions \ref{assump:generalA} and \ref{assump:specific_F}. Then
	\begin{equation*}
		\EV*{\mathcal{I}_\nu}^{1/2}(\hat{\theta}_\nu-\theta)\xrightarrow{d}N(0,1),\quad \nu\to 0,
	\end{equation*}
	holds, provided that $\EV*{\mathcal{I}_\nu}\to\infty$. Under Assumption \ref{assump:general_F} we have $\EV*{\mathcal{I}_\nu}\sim\phi(\nu)$, which tends to infinity.
\end{theorem}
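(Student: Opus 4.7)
The plan is to reduce the central limit theorem to the martingale CLT applied to $\mathcal{M}_\nu$. By Lemma \ref{lem:MLE_decomp}, $\hat{\theta}_\nu - \theta = \mathcal{M}_\nu/\mathcal{I}_\nu$ with $\langle \mathcal{M}_\nu\rangle_T = \mathcal{I}_\nu$, so Dambis--Dubins--Schwarz represents $\mathcal{M}_\nu$ as a time-changed Brownian motion. Once we establish that $\mathcal{I}_\nu/\EV*{\mathcal{I}_\nu}\to 1$ in probability as $\nu\to 0$, the martingale CLT gives $\mathcal{M}_\nu/\sqrt{\EV*{\mathcal{I}_\nu}}\xrightarrow{d} N(0,1)$ and Slutsky yields the assertion through the factorisation
\begin{equation*}
\EV*{\mathcal{I}_\nu}^{1/2}(\hat{\theta}_\nu-\theta) = \frac{\mathcal{M}_\nu}{\sqrt{\EV*{\mathcal{I}_\nu}}}\cdot \frac{\EV*{\mathcal{I}_\nu}}{\mathcal{I}_\nu}.
\end{equation*}

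The concentration of the observed Fisher information $\mathcal{I}_\nu$ is the heart of the argument and is where the Malliavin tools announced in the introduction enter. Because $\mathcal{W}$ is an isonormal process on $\mathfrak{H}$ (Assumption \ref{assump:generalA}) and $\mathcal{I}_\nu$ is a smooth functional of $X$ by Assumption \ref{assump:specific_F}, the infinite-dimensional Gaussian Poincar\'e inequality yields $\var(\mathcal{I}_\nu)\le \EV*{\norm*{\mathcal{D}\mathcal{I}_\nu}_{\mathfrak{H}}^2}$. The chain rule gives
\begin{equation*}
\mathcal{D}_\tau\mathcal{I}_\nu = 2\int_0^T \iprod*{B\inv F(X_t)}{B\inv\bigl(f'(X_t)\,\mathcal{D}_\tau X_t\bigr)}\,\D t,
\end{equation*}
and the linearised integral equation in Assumption \ref{assump:specific_F} for $\mathcal{D}_\tau X_t$ can be handled by Gronwall using boundedness of $f'$ and contractivity of $(S_{\nu t})$, bounding $\norm{\mathcal{D}_\tau X_t(y)}_{\mathbb{H}}$ essentially by $\norm{G_{\nu(t-\tau)}(y,\MTemptyplaceholder)}_{\mathbb{H}}$. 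Cauchy--Schwarz together with integration in $\tau,y,t$ and standard $L^2$-moment bounds on $X$ should produce $\var(\mathcal{I}_\nu)\lesssim \phi(\nu)\cdot\sup_{t\le T}\EV*{\norm{X_t}^2}$, which divided by $\EV*{\mathcal{I}_\nu}^2\sim\phi(\nu)^2$ gives a ratio of order $1/\phi(\nu)\to 0$.

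For the asymptotic $\EV*{\mathcal{I}_\nu}\sim\phi(\nu)$ under Assumption \ref{assump:general_F}, the lower bound $\norm{F(z)}\ge d\norm{z}$ and boundedness of $1/\sigma$ reduce matters to $\EV*{\norm{X_t}^2}\gtrsim\phi(\nu,t)$. Here the Da Prato--Debussche splitting $X_t = \bar{X}_t + Y_t$ with the stochastic convolution $\bar{X}_t = \int_0^t S_{\nu(t-s)}B\,\D W_s$ is the natural device: the It\^o isometry gives $\EV*{\norm{\bar{X}_t}^2}\sim\phi(\nu,t)$, and a Gronwall argument exploiting the sublinear growth of $F$ controls the nonlinear remainder $Y_t$ to lower order. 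Integrating in $t\in[0,T]$ and invoking $\phi(\nu,t)\sim\phi(\nu)$ from Remark \ref{rmk:beta_growth} delivers $\EV*{\mathcal{I}_\nu}\gtrsim\phi(\nu)$; the matching upper bound is analogous, using the linear growth side $\norm{F(z)}\le D(\norm{z}+1)$ in the variation-of-constants formula. The main obstacle is the Poincar\'e variance bound: it is precisely the single-copy appearance of the Green function in $\mathcal{D}X$ that drives the ratio $\var(\mathcal{I}_\nu)/\EV*{\mathcal{I}_\nu}^2$ below one, and cleanly executing the Gronwall estimate in the presence of the nonlinear feedback is where the Malliavin calculus and the splitting trick must work in concert.
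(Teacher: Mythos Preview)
Your overall architecture is exactly the paper's: the factorisation \eqref{eq:decomthetanu}, a martingale CLT, concentration of $\mathcal{I}_\nu/\EV*{\mathcal{I}_\nu}$ via the Poincar\'e inequality, and the Da Prato--Debussche splitting for $\EV*{\mathcal{I}_\nu}\sim\phi(\nu)$. The second part of your proposal (the splitting argument for the rate) is essentially what the paper does in Lemma \ref{lem:non-linearexpectation}.

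The gap is in the variance bound. Your pointwise estimate $\norm*{\mathcal{D}_\tau X_t(y)}_{\mathbb{H}}\lesssim\norm*{G_{\nu(t-\tau)}(y,\MTemptyplaceholder)}_{\mathbb{H}}$ is one power of $\phi(\nu)$ too weak. If you follow your Cauchy--Schwarz route, the $y$-integral of $\norm*{G_{\nu(t-\tau)}(y,\MTemptyplaceholder)}_{\mathbb{H}}^2$ is $\norm*{BS_{\nu(t-\tau)}}_{\operatorname{HS}}^2$, and after integrating in $\tau$ you pick up a full factor $\phi(\nu,t)$. The honest outcome of your scheme is $\var(\mathcal{I}_\nu)\lesssim\phi(\nu)\,\EV*{\mathcal{I}_\nu}$, and since $\sup_t\EV*{\norm*{X_t}^2}\sim\phi(\nu)$ (this is precisely what makes inference possible!), the ratio $\var(\mathcal{I}_\nu)/\EV*{\mathcal{I}_\nu}^2$ is only $\mathcal{O}(1)$, not $\smallo(1)$. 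Your claimed ``ratio of order $1/\phi(\nu)$'' implicitly treats $\sup_t\EV*{\norm*{X_t}^2}$ as bounded, which it is not.

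The missing idea, supplied by Lemma \ref{lem:boundedness_convergence}, is to integrate in $y$ \emph{before} taking the $\mathbb{H}$-norm. For any unit $\phi\in L^2(\Lambda)$ one has
\[
\norm*{\int_\Lambda\phi(y)\mathcal{D}_\tau X_t(y)\,\D y}_{\mathbb{H}}\le \norm*{B}e^{\norm*{f'}_\infty t},
\]
a constant \emph{independent of $\nu$}. The initial term in the Picard iteration is $\int_\Lambda\phi(y)G_{\nu(t-\tau)}(y,\MTemptyplaceholder)\,\D y=S_{\nu(t-\tau)}\phi$, whose $\mathbb{H}$-norm is bounded by $\norm*{B}\norm*{\phi}$ via the \emph{operator} norm contractivity $\norm*{S_{\nu s}}\le 1$, not the Hilbert--Schmidt norm. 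Applying this with $\phi$ proportional to $f(X_t)f'(X_t)/\sigma^2$ gives the sharp bound $\var(\mathcal{I}_\nu)\lesssim\EV*{\mathcal{I}_\nu}$ of Lemma \ref{lem:Variance_Bound}, and hence $\var(\mathcal{I}_\nu)/\EV*{\mathcal{I}_\nu}^2\lesssim 1/\EV*{\mathcal{I}_\nu}\to 0$. This is the step where the Malliavin calculus genuinely has to ``work in concert'' with the semigroup structure; the pointwise Green function bound throws this away.
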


This CLT paves the way for the construction of asymptotic confidence intervals.

	\begin{corollary}\label{cor:Confidence}
		Grant Assumptions \ref{assump:generalA} and \ref{assump:specific_F}. If $\EV*{\mathcal{I}_\nu}\to\infty$, then
		\begin{equation*}
			\mathcal{A}_{1-\bar{\alpha}}\coloneqq \left[\hat{\theta}_\nu-q_{1-\bar{\alpha}/2}\mathcal{I}_\nu^{-1/2},\hat{\theta}_\nu+q_{1-\bar{\alpha}/2}\mathcal{I}_\nu^{-1/2}\right],\quad \bar{\alpha}\in(0,1),
		\end{equation*}
		with the standard Gaussian $(1-\bar{\alpha}/2)$-quantile $q_{1-\bar{\alpha}/2}$ are asymptotic $(1-\bar{\alpha})$-confidence intervals of $\theta$ as $\nu\to 0$.
	\end{corollary}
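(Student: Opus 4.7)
The plan is to upgrade the CLT from Theorem \ref{thm:CLTnon-linear}, which uses the deterministic normalisation $\EV*{\mathcal{I}_\nu}^{1/2}$, to a CLT with the data-driven normalisation $\mathcal{I}_\nu^{1/2}$. Once this is done, the asymptotic coverage statement
\begin{equation*}
\prob*[\theta]{\theta\in\mathcal{A}_{1-\bar{\alpha}}}
=\prob*[\theta]{\abs*{\mathcal{I}_\nu^{1/2}(\hat\theta_\nu-\theta)}\le q_{1-\bar{\alpha}/2}}\longrightarrow 1-\bar{\alpha}
\end{equation*}
follows directly from the continuity of the Gaussian distribution function at $\pm q_{1-\bar{\alpha}/2}$.

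The key step in the upgrade is Slutsky's Lemma applied to the decomposition
\begin{equation*}
\mathcal{I}_\nu^{1/2}(\hat\theta_\nu-\theta)
=\bigl(\mathcal{I}_\nu/\EV*{\mathcal{I}_\nu}\bigr)^{1/2}\,\EV*{\mathcal{I}_\nu}^{1/2}(\hat\theta_\nu-\theta).
\end{equation*}
The second factor converges in distribution to $N(0,1)$ by Theorem \ref{thm:CLTnon-linear}, so it suffices to show the ratio consistency
\begin{equation*}
\mathcal{I}_\nu/\EV*{\mathcal{I}_\nu}\xrightarrow{\;\prob*[\theta]{}\;}1,\qquad \nu\to 0.
\end{equation*}
Since $\EV*{\mathcal{I}_\nu}\to\infty$, it is enough, by Chebyshev's inequality, to control the variance and show $\var(\mathcal{I}_\nu)=o(\EV*{\mathcal{I}_\nu}^2)$. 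This variance bound is exactly the kind of fluctuation estimate that the authors announce as a central ingredient in their proof of the CLT (via the infinite-dimensional Gaussian Poincaré inequality applied to $\mathcal{I}_\nu=\int_0^T\norm*{B\inv F(X_t)}^2\D t$ together with Malliavin-derivative bounds on $X_t$ coming from Assumption \ref{assump:specific_F}). Hence the required concentration is available as a by-product of the machinery already invoked for Theorem \ref{thm:CLTnon-linear}, and no new probabilistic input is needed.

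I expect the main obstacle (or rather the only real content beyond bookkeeping) to be making the variance estimate for $\mathcal{I}_\nu$ precise enough to conclude $\var(\mathcal{I}_\nu)/\EV*{\mathcal{I}_\nu}^2\to 0$. Under Assumption \ref{assump:general_F} the denominator grows like $\phi(\nu)^2$, so one needs $\var(\mathcal{I}_\nu)\ll\phi(\nu)^2$. This follows from the Poincaré-type bound $\var(\mathcal{I}_\nu)\lesssim\EV*{\norm*{\mathcal{D}\mathcal{I}_\nu}_{\mathfrak H}^2}$, the chain rule giving $\mathcal{D}_\tau\mathcal{I}_\nu = 2\int_0^T\iprod*{B\inv F(X_t)}{B\inv F'(X_t)\mathcal{D}_\tau X_t}\D t$, and the uniform boundedness of $f'$, combined with the Green-function representation of $\mathcal{D}_\tau X_t$ in Assumption \ref{assump:specific_F}. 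After that, the remainder of the argument is a single application of Slutsky and the continuous mapping theorem, together with the observation that (again by $\EV*{\mathcal{I}_\nu}\to\infty$ and the ratio consistency) the event $\{\mathcal{I}_\nu>0\}$ has probability tending to $1$, so that the interval $\mathcal{A}_{1-\bar{\alpha}}$ is well defined with probability tending to one.
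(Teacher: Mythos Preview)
Your proposal is correct and follows essentially the same route as the paper: the paper's proof cites Lemma \ref{lem:Variance_Bound} (which is precisely the Poincar\'e-inequality variance bound $\var(\mathcal{I}_\nu)\lesssim\EV*{\mathcal{I}_\nu}$ you describe) to get $\mathcal{I}_\nu/\EV*{\mathcal{I}_\nu}\xrightarrow{\prob*{}}1$, then combines this with Theorem \ref{thm:CLTnon-linear} via Slutsky to obtain $\mathcal{I}_\nu^{1/2}(\hat\theta_\nu-\theta)\xrightarrow{d}N(0,1)$. One small remark: your aside invoking Assumption \ref{assump:general_F} and $\phi(\nu)$ is unnecessary here, since the corollary only assumes $\EV*{\mathcal{I}_\nu}\to\infty$, and the bound $\var(\mathcal{I}_\nu)\lesssim\EV*{\mathcal{I}_\nu}$ already yields $\var(\mathcal{I}_\nu)/\EV*{\mathcal{I}_\nu}^2\to 0$ directly.
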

	The proofs of Theorem \ref{thm:CLTnon-linear} and Corollary \ref{cor:Confidence} are explained in Section \ref{subsec:ProofCLT}. Details are postponed to Section \ref{sec:Proofs_CLT}.
	\begin{remark}~
		\begin{enumerate}[(i)]
			\item The  convergence rate $\phi(\nu)^{-1/2}=(\int_0^T \norm*{S_{\nu t}}_{\operatorname{HS}}^2\,\D t)^{-1/2}$ in an ${\mathcal O}_P$-sense holds already under less assumptions, see Propositions \ref{thm:non-linearrate} and \ref{lem:general_rate} below.
			%\item The global growth bounds on $F$ can be replaced by local ones if the solution process $X$ concentrates these areas. We will see this effect in the numerical experiments, Section \ref{subsec:Numerics}, occurring when considering the $\Phi_1^4$ equation.
			\item If $F$ is linear, $X_0=0$ and $\sigma\equiv\sigma_0>0$, then the asymptotic variance of $\hat{\theta}_\nu$ does not depend on the noise level $\sigma_0$. More precisely, we have
			\begin{equation*}
				\EV*{\mathcal{I}_\nu} = \int_0^T \EV*{\norm*{\sigma_0^{-1} X_t}^2}\,\D t =  \int_0^T \EV*{\norm*{Z_t}^2}\,\D t,
			\end{equation*}
			where $Z$ solves the same (linear) SPDE as $X$ with $\sigma_0=1$.
			\item While the observation laws $\prob*[\theta_1]{}$ and $\prob*[\theta_2]{}$ for fixed $\nu>0$ and $\theta_1\neq\theta_2$ are equivalent, we can infer from Theorem \ref{thm:CLTnon-linear} that these laws separate as $\nu\to 0$ (they are not {\it contiguous}). So, even on a fixed time horizon $T$ we can identify $\theta$ in the limit $\nu\to 0$ without referring to the (formal) singular limiting SPDE
			\begin{equation*}
				\D X_t = \theta F(X_t)\,\D t +B \D W_t.
			\end{equation*}
		\end{enumerate}
	\end{remark}
	\begin{example}[continued]
		Invoking Lemma \ref{lem:examples_Assumptions},  Theorem \ref{thm:CLTnon-linear} holds in the case of the fractional heat equation  \eqref{eq:Guiding_Example}. Under the condition $\abs*{f(x)}\ge d\abs*{x}$, the rate of convergence is given by $\phi(\nu)^{-1/2}\sim \nu^{1/(2\alpha)}$. The order of the generator plays a crucial role: Smaller values of $\alpha$ induce less regularisation of the semigroup and result in faster convergence rates. This accords well with the result of \cite{Huebner1995}, who proved that in the spectral approach for linear SPDEs the convergence is the faster, the smaller the leading order. For the Laplacian ($\alpha=2$) we arrive at the rate $\nu^{1/4}$.
		
		Further intuition can be gained by considering the diagonalizable case where $F$ and $B$ are the identity and $X_0=0$.
%By the definition of $\mathcal{I}_\nu$, it is clear that the asymptotic variance of $\hat{\theta}_\nu$ decreases as $T\to\infty$.
We obtain
			\begin{equation*}
				\EV*{\mathcal{I}_\nu} = \int_0^T \int_0^t e^{2\theta s}\norm*{S_{\nu s}}_{\operatorname{HS}}^2\,\D s\D t\sim \nu^{-1/\alpha}\int_0^T \int_0^t e^{2\theta s}s^{-1/\alpha}\,\D s\D t.
			\end{equation*}
			This yields the rate
			\begin{equation*}
				\EV*{\mathcal{I}_\nu}^{-1/2}\sim\begin{cases}\nu^{1/(2\alpha)}e^{2\theta T}T^{1/(2\alpha)-1},&\theta>0,\\\nu^{1/(2\alpha)}T^{1/(2\alpha)-1},&\theta = 0,\\\nu^{1/(2\alpha)}T^{-1/2},& \theta<0,\end{cases}
			\end{equation*}
			as $\nu\to 0$, $T\to\infty$. This change of asymptotic behaviour in $T$ is well-known for the (real-valued) Ornstein-Uhlenbeck process (Proposition 3.46 of \cite{Kutoyants2013}).
			
			The $k$-th Fourier mode $X_t^k \coloneqq \iprod*{X_t}{e_k}$ satisfies the Ornstein-Uhlenbeck dynamics $\D X_t^k  = (-\nu\lambda_k + \theta)X_t^k\D t + \D W_t^k$, where $(W_t^k)_{k\ge 1}$ are independent Brownian motions. When we observe the first $K_\nu=\max\{k\ge 1\,|\,\nu\lambda_k\le 1\}$ Fourier modes, the drift factor $-\nu\lambda_k +\theta$ remains bounded. The corresponding MLE $\hat{\theta}_{K_\nu}$ for $\theta$ satisfies
			\begin{equation*}
				\hat{\theta}_{K_\nu}  = \frac{\sum_{k=1}^{K_\nu}\int_0^T X_t^k (\D X_t^k+\nu\lambda_kX_t^k\D t)}{\sum_{k=1}^{K_\nu}\int_0^T (X_t^k)^2\,\D t} =\theta + \frac{\frac{1}{K_\nu}\sum_{k=1}^{K_\nu}\int_0^T X_t^k \D W_t^k}{\frac{1}{K_\nu}\sum_{k=1}^{K_\nu}\int_0^T (X_t^k)^2\,\D t}.
			\end{equation*}
			Using the independence of the summands, we find $\hat{\theta}_{K_\nu}-\theta=\mathcal{O}_{\prob*{}}(K_\nu^{-1/2})=\mathcal{O}_{\prob*{}}(\nu^{-1/(2\alpha)})$.
	\end{example}
%In Subsection \ref{subsec:Efficiency} we prove that under the assumptions of Theorem \ref{thm:CLTnon-linear} the MLE $\hat{\theta}_\nu$ is locally asymptotically minimax optimal. Subsection \ref{subsec:Numerics} contains a numerical experiment underlining Theorem \ref{thm:CLTnon-linear}.
	
	\subsection{\textbf{Efficiency}}\label{subsec:Efficiency}
			
			It is in general not clear that the maximum-likelihood method yields an efficient estimator in terms of convergence rate and asymptotic variance. Here, it is indeed the case and follows from the local asymptotic normality (LAN) property (\cite{LeCam1960}, see also Chapter 2 of \cite{Ibragimov1981}).

			\begin{proposition}\label{lem:LANProperty}
				Grant Assumptions \ref{assump:generalA} and \ref{assump:specific_F}. For $h\in\mathbb{R}$ define $h_\nu\coloneqq h\EV[\theta]{\mathcal{I}_\nu}^{-1/2}$. If $\EV[\theta]{\mathcal{I}_\nu}\to\infty$ as $\nu\to 0$, then for any $\theta\in\mathbb{R}$  the log-likelihood  satisfies under $\prob*[\theta]{}$
				\begin{equation*}
					\ell(X,\theta+h_\nu)-\ell(X,\theta)\xrightarrow{d}N(-h/2,h^2),\quad \nu\to 0.
				\end{equation*}
				In particular, local asymptotic normality holds.
			\end{proposition}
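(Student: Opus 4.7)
The strategy is to exhibit a clean quadratic expansion of the log-likelihood around $\theta$ and then recycle the two main limits that drive Theorem~\ref{thm:CLTnon-linear}.

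First I would write down the log-likelihood explicitly. Under $\prob*[\theta]{}$, the Girsanov identity from the proof of Lemma~\ref{lem:MLE_decomp} gives, for any parameter value $\vartheta\in\mathbb{R}$,
\begin{equation*}
\ell(X,\vartheta) = \vartheta\bigl[\mathcal{M}_\nu + \theta\,\mathcal{I}_\nu\bigr] - \tfrac{1}{2}\vartheta^2\,\mathcal{I}_\nu,
\end{equation*}
which follows by substituting $B\inv(\D X_t-\nu AX_t\,\D t)=\theta B\inv F(X_t)\,\D t+\D W_t$ into the stochastic integral in the log-likelihood formula of Remark~(iii). Taking the difference at $\vartheta=\theta+h_\nu$ and $\vartheta=\theta$, the linear-in-$\theta$ contributions cancel exactly and one is left with
\begin{equation*}
\ell(X,\theta+h_\nu)-\ell(X,\theta) = h_\nu\,\mathcal{M}_\nu - \tfrac{1}{2}h_\nu^2\,\mathcal{I}_\nu = h\,\frac{\mathcal{M}_\nu}{\EV[\theta]{\mathcal{I}_\nu}^{1/2}} - \frac{h^2}{2}\,\frac{\mathcal{I}_\nu}{\EV[\theta]{\mathcal{I}_\nu}}.
\end{equation*}

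The next step is to plug in the two limits
\begin{enumerate}[(a)]
\item $\mathcal{M}_\nu/\EV[\theta]{\mathcal{I}_\nu}^{1/2}\xrightarrow{d}N(0,1)$ under $\prob*[\theta]{}$, and
\item $\mathcal{I}_\nu/\EV[\theta]{\mathcal{I}_\nu}\xrightarrow{P}1$ under $\prob*[\theta]{}$.
\end{enumerate}
Both are precisely the building blocks of Theorem~\ref{thm:CLTnon-linear}: the representation \eqref{eq:MLE_decmop} gives $\EV[\theta]{\mathcal{I}_\nu}^{1/2}(\hat\theta_\nu-\theta) = (\mathcal{M}_\nu/\EV[\theta]{\mathcal{I}_\nu}^{1/2})\cdot(\EV[\theta]{\mathcal{I}_\nu}/\mathcal{I}_\nu)$, so the CLT for $\hat\theta_\nu$ is obtained via Slutsky from exactly (a) and (b). Since (a) and (b) are available under Assumptions~\ref{assump:generalA} and \ref{assump:specific_F} whenever $\EV[\theta]{\mathcal{I}_\nu}\to\infty$, another application of Slutsky yields
\begin{equation*}
\ell(X,\theta+h_\nu)-\ell(X,\theta) \xrightarrow{d} hZ - \tfrac{1}{2}h^2, \qquad Z\sim N(0,1),
\end{equation*}
which is the $N(-h^2/2,h^2)$ limit claimed. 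Since $\EV[\theta]{\mathcal{I}_\nu}\to\infty$ forces $h_\nu\to 0$, this is precisely the LAN expansion at $\theta$ with rate $\EV[\theta]{\mathcal{I}_\nu}^{-1/2}$ and Fisher information equal to $1$ in the rescaled local parameter $h$.

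The main (and only) obstacle is the concentration statement (b), because $\mathcal{I}_\nu=\int_0^T\|B\inv F(X_t)\|^2\,\D t$ is a non-linear functional of the solution whose variance must be controlled to be of order $o(\EV[\theta]{\mathcal{I}_\nu}^2)$. This is where the Malliavin derivative of $X_t(y)$ provided by Assumption~\ref{assump:specific_F} and the infinite-dimensional Gaussian Poincar\'e inequality enter, supplemented by the Da Prato--Debussche splitting to separate the Gaussian stochastic convolution from the non-linear drift contribution. However, the same machinery is already required for Theorem~\ref{thm:CLTnon-linear}, so once it is in place the LAN property follows essentially for free from the quadratic expansion above.
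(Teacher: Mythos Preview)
Your proof is correct and follows essentially the same route as the paper: expand the log-likelihood difference to $h_\nu\mathcal{M}_\nu-\tfrac12 h_\nu^2\mathcal{I}_\nu$, then invoke the martingale CLT for $\mathcal{M}_\nu/\EV[\theta]{\mathcal{I}_\nu}^{1/2}$ and the concentration $\mathcal{I}_\nu/\EV[\theta]{\mathcal{I}_\nu}\to 1$ (Lemma~\ref{lem:Variance_Bound}) via Slutsky. Note also that your limit $N(-h^2/2,h^2)$ is the right one; the $N(-h/2,h^2)$ in the displayed statement is a typo.
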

%			\begin{proof}
%				See Section \ref{subsec:Proof_LAN}.
%			\end{proof}
\pagebreak
			\begin{remark}~
			\begin{enumerate}[(i)]
				\item Proposition \ref{lem:LANProperty} shows that our statistical model is asymptotically equivalent in Le Cam's sense to a standard Gaussian-shift model. It implies in particular that $\hat{\theta}_\nu$ is asymptotically efficient in the sense of Definition 11.1 of \cite{Ibragimov1981}.
%				\item Theorem II.12.1 of \cite{Ibragimov1981} shows that the local asymptotic normality provided by Lemma \ref{lem:LANProperty} implies that for any estimator $\tilde{\theta}$, any $\epsilon>0$ and any bounded loss function $w$
%				\begin{equation*}
%					\liminf_{\nu\to 0}\sup_{\abs*{h-\theta}<\epsilon}\EV*[h]{w\left(\sqrt{\EV[\theta]{\mathcal{I}_\nu}}\left(\tilde{\theta}-h\right)\right)}\ge \EV*{w(Z)},
%				\end{equation*}
%				holds for some $Z\sim N(0,1)$. By Theorem \ref{thm:CLTnon-linear}, this shows that the MLE $\hat{\theta}_\nu$ is asymptotically minimax-optimal for any bounded loss function.\todo{uniformity?}
				\item Without imposing Assumptions \ref{assump:generalA} and \ref{assump:specific_F} one can still show, combining Theorem 2.2, Proposition 2.1 and Theorem 2.1 of \cite{Tsybakov2009}, that the rate of convergence $\EV{\mathcal{I}_\nu}^{-1/2}$ is minimax-optimal in the sense of Definition 2.1 in \cite{Tsybakov2009}. We omit the details.
			\end{enumerate}
			\end{remark}
	
\subsection{\textbf{A numerical example}}\label{subsec:Numerics}

	\begin{figure}
	\includegraphics[width = \columnwidth]{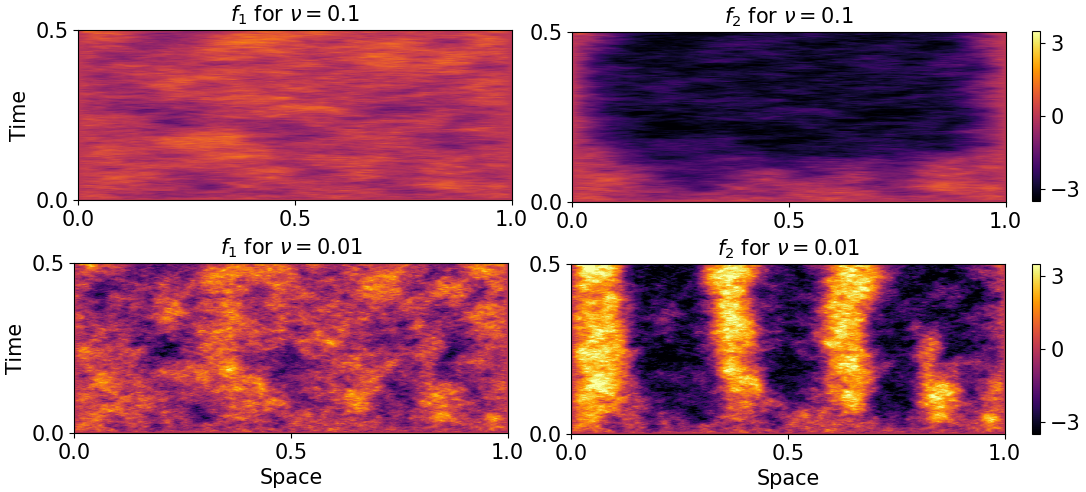}
	\caption{Realization of \eqref{eq:SPDE_sim} for $f_1(y) = -y(2+\sin(y))$ (left) and $f_2(y)=-(y^3-9y)$ (right) and $\nu\in \Set*{0.1,0.01}$.}
	\label{fig:Simulation}
	\end{figure}

	We simulate the solution to the stochastic heat equation \eqref{eq:Guiding_Example} with $\alpha=2$ and $\sigma\equiv 1$, i.e.
	\begin{equation}
		\D X_t(y) = \partial_{yy}^2 X_t(y)\,\D t + \theta f(X_t(y))\,\D t + \D W_t(y),\quad t\in[0,1],\quad X_0\equiv 0,\label{eq:SPDE_sim}
	\end{equation}
	on $\Lambda=(0,1)$. We choose $\theta=3$ and two different nonlinearities $f$, namely
	\begin{equation*}
		f_1(y) = -y(2+\sin(y))\text{ and }f_2(y) = -(y^3-9y),\quad y\in\mathbb{R}.
	\end{equation*}
	 Note that the latter case corresponds to the well-known $\Phi_1^4$ equation, an Allen-Cahn phase field model, with stable points $\pm3$. We use a semi-implicit Euler scheme with a finite difference approximation of $\partial_{yy}^2$ based on  \cite{Lord2014}. To this end, we discretise space-time using the uniform grid
	\begin{equation*}
		\Set*{(y_j,t_k)\given y_j= j/M, t_k=k/N, j=0,\dots,M, k=0,\dots,N}
	\end{equation*}
	for $M,N\in\mathbb{N}$. In line with the results of \cite{Lord2014} that we should choose $N\approx M^2$, we take $M=10^3$ and $N=10^6$. The results are displayed in Figure \ref{fig:Simulation}.
	
	All displays in Figure \ref{fig:Simulation} arise from the same realization of the noise $\D W_t$. Most strikingly, considering $f_2$, which gives rise to a double-well potential, supports our intuition on why the vanishing-diffusivity regime provides insight into the non-linearity: With the diffusivity at $\nu = 0.1$ only one stable point at $-3$ is visible and much information of $f_2$ is diluted. When the diffusivity is lower, however, the characteristic length scale shortens and the structure of two stable points at the states $\pm 3$ emerges.
	
	\begin{figure}[t]
	\centering
	\begin{subfigure}[t]{0.48\columnwidth}
	\centering
	\includegraphics[width=\columnwidth]{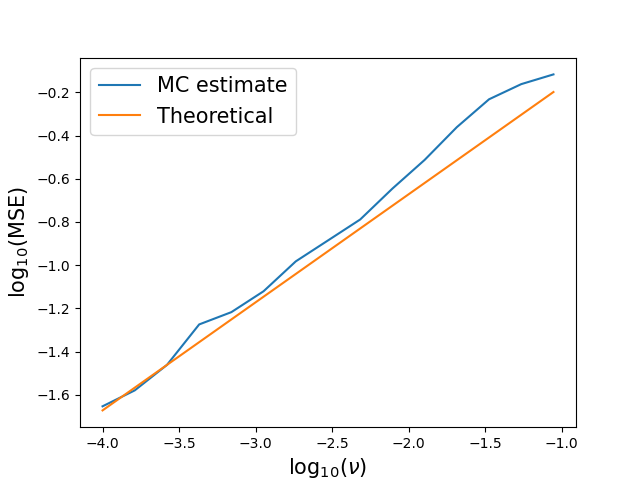}
	\end{subfigure}~
	\begin{subfigure}[t]{0.48\columnwidth}
	\centering
	\includegraphics[width=\columnwidth]{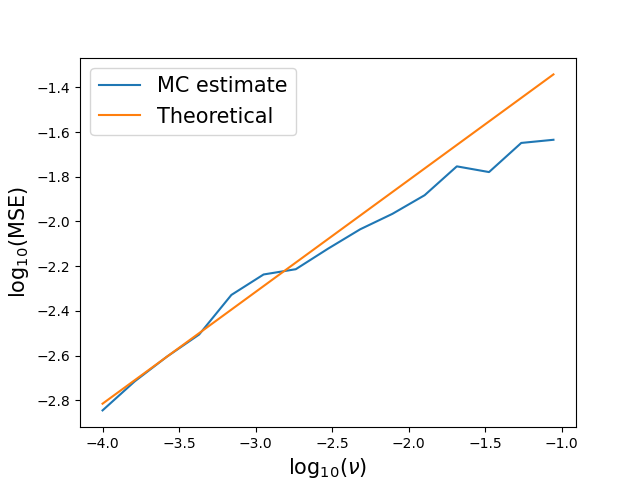}
	\end{subfigure}
	\caption{MSE as a function of the diffusivity $\nu$ for $f_1$ (left) and $f_2$ (right), $\log-\log$ plot.}\label{fig:MSE}
	\end{figure}
	
	 We implement the estimator $\hat{\theta}_\nu$ using the discretization given by \eqref{eq:Discretization_full} (below) with $\delta_y=1/M$ and $\delta_t=1/N$ and approximate $S_{\delta_{t,k}}X_{t_k}$ by a one-step implicit Euler scheme.  $300$ Monte-Carlo runs are computed for each value of $\nu$. In Figure \ref{fig:MSE}, we display the (Monte Carlo) MSE $\EV{(\hat{\theta}_\nu-\theta)^2}$.  In the case of $f_1$, the assumptions of Theorem \ref{thm:CLTnon-linear} are satisfied and the results reproduce well the (squared) theoretical convergence rate $\phi(\nu)^{-1}=\nu^{1/2}$. On the other hand, for $f_2$, the assumptions of Theorem \ref{thm:CLTnon-linear} are violated because $f_2$ does not allow for a linear growth bound. For diffusivities $\nu$ smaller than $0.01$ the  MSE follows the same theoretical rate of convergence as before. This might be explained by the locally linear growth of $f_2$ around its stable fixed points $\pm 3$.

\subsection{\textbf{Major steps in the proof of the Central Limit Theorem}}\label{subsec:towardsCLT}

	We rewrite the decomposition \eqref{eq:MLE_decmop} for the MLE $\hat{\theta}_\nu$ to obtain
	\begin{equation}\label{eq:decomthetanu}
		\EV*{\mathcal{I}_\nu}^{1/2}(\hat{\theta}_\nu-\theta) = \frac{\mathcal{M}_\nu}{\EV*{\mathcal{I}_\nu}^{1/2}}\frac{\EV*{\mathcal{I}_\nu}}{\mathcal{I}_\nu}.
	\end{equation}	
	Since $\mathcal{I}_\nu$ is the quadratic variation of the martingale $\mathcal{M}_\nu$, Theorem \ref{thm:CLTnon-linear} will follow from a martingale central limit theorem if we can show that
	\begin{equation}
		\frac{\EV*{\mathcal{I}_\nu}}{\mathcal{I}_\nu}\xrightarrow{\prob*{}}1\text{ as $\nu\to0$.}\label{eq:Claim_Main}
	\end{equation}
	
	\begin{claim}\label{claim:1}
		If $\EV*{\mathcal{I}_\nu}\to\infty$ as $\nu\to0$, then \eqref{eq:Claim_Main} holds.
	\end{claim}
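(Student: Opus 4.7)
The plan is to show $\mathcal{I}_\nu/\EV*{\mathcal{I}_\nu}\to 1$ in $\prob*{}$-probability---equivalent to \eqref{eq:Claim_Main}, since the hypothesis $\EV*{\mathcal{I}_\nu}\to\infty$ then forces $\mathcal{I}_\nu\to\infty$ in probability, so the ratio stays bounded away from $0$. By Chebyshev's inequality this reduces to proving $\mathrm{Var}(\mathcal{I}_\nu)=o(\EV*{\mathcal{I}_\nu}^2)$. Since $\mathcal{I}_\nu$ is a smooth functional of the isonormal Gaussian process $\mathcal{W}$, I would apply the infinite-dimensional Gaussian Poincar\'e inequality
\begin{equation*}
\mathrm{Var}(\mathcal{I}_\nu)\le\EV*{\norm*{\mathcal{D}\mathcal{I}_\nu}_{\mathfrak{H}}^2},
\end{equation*}
further reducing the task to establishing the bound $\EV*{\norm*{\mathcal{D}\mathcal{I}_\nu}_{\mathfrak{H}}^2}\lesssim\EV*{\mathcal{I}_\nu}$.

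The Malliavin chain rule applied to $\mathcal{I}_\nu=\int_0^T\int_\Lambda f(X_t(y))^2/\sigma^2(y)\,\D y\,\D t$ produces, for every $\tau\in[0,T]$,
\begin{equation*}
	\mathcal{D}_\tau\mathcal{I}_\nu=\int_0^T\int_\Lambda g(X_t(y))\,\mathcal{D}_\tau X_t(y)\,\D y\,\D t\quad\text{in }\mathbb{H},
\end{equation*}
where $g\coloneqq 2ff'/\sigma^2$. Plugging in the Volterra representation of $\mathcal{D}_\tau X_t$ from Assumption \ref{assump:specific_F} and invoking Fubini, the leading term turns into the action of the adjoint semigroup $S_{\nu(t-\tau)}^\ast g(X_t)$, whose $\mathbb{H}$-norm is bounded by $\norm*{g(X_t)}$ times a constant depending on $\sigma$, using the contractivity of $(S_t)$. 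It is essential to exploit this contractivity rather than the naive Cauchy-Schwarz bound $\norm*{g(X_t)}\cdot\norm*{\mathcal{D}_\tau X_t}_{L^2(\Lambda,\mathbb{H})}$, which would introduce the Hilbert--Schmidt factor $\norm*{S_{\nu(t-\tau)}}_{\operatorname{HS}}$ and at best yield $\mathrm{Var}(\mathcal{I}_\nu)\lesssim\phi(\nu)\EV*{\mathcal{I}_\nu}$---insufficient to conclude, since $\EV*{\mathcal{I}_\nu}\lesssim\phi(\nu)$.

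The main obstacle is handling the nonlinear correction term in the Volterra equation for $\mathcal{D}_\tau X_t$, which recouples to $\mathcal{D}_\tau X_s$ at earlier times $s\in(\tau,t)$. I would set
\begin{equation*}
N^\ast(\tau,s)\coloneqq\sup_{\norm*{h}\le 1}\norm*{\int_\Lambda h(\eta)\mathcal{D}_\tau X_s(\eta)\,\D\eta}_{\mathbb{H}},
\end{equation*}
and, by iterating the Volterra representation once more, use contractivity of $(S_t)$ together with $\norm*{f'}_\infty<\infty$ to derive the integral inequality $N^\ast(\tau,s)\le C+C\norm*{f'}_\infty\int_\tau^s N^\ast(\tau,r)\,\D r$. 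Gronwall's lemma then delivers $N^\ast(\tau,s)\le Ce^{C\norm*{f'}_\infty T}$ uniformly in $\tau\le s\le T$. Choosing $h=g(X_t)/\norm*{g(X_t)}$ where applicable gives $\norm*{\int_\Lambda g(X_t(y))\mathcal{D}_\tau X_t(y)\,\D y}_{\mathbb{H}}\lesssim\norm*{g(X_t)}$, and combining Minkowski's and Cauchy-Schwarz's inequalities in the outer $t$-integral with the pointwise bound $\abs*{g}\le 2\norm*{f'}_\infty\abs*{f}/\sigma^2$ produces $\norm*{\mathcal{D}\mathcal{I}_\nu}_{\mathfrak{H}}^2\lesssim\mathcal{I}_\nu$, with implicit constants depending only on $T$, $\norm*{f'}_\infty$ and $\sigma$. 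Taking expectation and invoking Chebyshev then completes the proof.
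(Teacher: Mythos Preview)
Your proposal is correct and follows essentially the same route as the paper. The paper's proof of Claim~\ref{claim:1} (Lemma~\ref{lem:Variance_Bound}) also applies the Poincar\'e inequality, the chain rule for $\mathcal{D}\mathcal{I}_\nu$, and the key dual bound
\[
\sup_{\norm*{h}\le 1}\norm*{\int_\Lambda h(y)\mathcal{D}_\tau X_t(y)\,\D y}_{\mathbb{H}}\le \norm*{B}e^{\norm*{f'}_\infty t}
\]
(Lemma~\ref{lem:boundedness_convergence}), which is precisely your $N^\ast(\tau,t)$; the only cosmetic difference is that the paper establishes this bound via an explicit Picard iteration (Lemmas~\ref{lem:Picard2} and~\ref{lem:convergence}) rather than by Gronwall on the integral inequality you write down---the two are of course equivalent, the Picard route having the minor advantage of not requiring a priori finiteness of $N^\ast$.
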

	The second claim quantifies the rate of convergence $\EV*{\mathcal{I}_\nu}^{-1/2}$:
	\begin{claim}\label{claim:2}
		Under Assumption \ref{assump:general_F}, $\EV{\mathcal{I}_\nu}\sim\phi(\nu)\to \infty$ as $\nu\to 0$.
	\end{claim}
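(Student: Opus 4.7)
The plan is to reduce the claim to the two-sided estimate $\int_0^T \EV*{\norm*{X_t}^2}\,\D t \sim \phi(\nu)$ and then to establish the upper and lower inequalities separately from the mild formulation. The reduction is immediate: since $B$ and $B\inv$ are bounded, $\norm*{B\inv F(X_t)}$ is comparable to $\norm*{F(X_t)}$ up to the factors $\norm*{B\inv}, \norm*{B}$, and combining this with the two-sided growth bound of Assumption \ref{assump:general_F} produces constants $c,C>0$ depending only on $B, d, D, T$ such that
\begin{equation*}
c\int_0^T \EV*{\norm*{X_t}^2}\,\D t \le \EV*{\mathcal{I}_\nu}\le C\Big(1+\int_0^T \EV*{\norm*{X_t}^2}\,\D t\Big).
\end{equation*}
Since $\phi(\nu)\to\infty$ by Remark \ref{rmk:beta_growth}, it suffices to prove the equivalence for $\int_0^T \EV*{\norm*{X_t}^2}\,\D t$.

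For the upper bound I apply $(a+b+c)^2\le 3(a^2+b^2+c^2)$ to the variation-of-constants formula and bound the three resulting terms: contractivity of $(S_t)_{t\ge 0}$ together with $\EV*{\norm*{X_0}^2}<\infty$ handles the initial-value term; Cauchy-Schwarz together with the linear growth $\norm*{F(X_s)}\le D(\norm*{X_s}+1)$ handles the drift integral; Itô's isometry together with submultiplicativity of the Hilbert-Schmidt norm gives $\EV*{\norm*{\int_0^t S_{\nu(t-s)}B\,\D W_s}^2}\le \norm*{B}^2\phi(\nu,t)$ for the stochastic convolution. Grönwall's inequality then yields $\EV*{\norm*{X_t}^2}\lesssim 1 + \phi(\nu,t)$ uniformly in $t\in[0,T]$, and integrating over $[0,T]$ closes the upper half of the equivalence.

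The lower bound is the main obstacle, because the drift integral can a priori be of the same order as the stochastic convolution, so a naive decomposition $X_t=Y_t+R_t$ combined with $\norm*{X_t}^2\ge \tfrac{1}{2}\norm*{Y_t}^2-\norm*{R_t}^2$ is not enough: the upper bound just obtained only tells us that $\EV*{\norm*{R_t}^2}$ itself may be of order $\phi(\nu,t)$. The trick is to read the mild formula backwards,
\begin{equation*}
\int_0^t S_{\nu(t-s)}B\,\D W_s = X_t - S_{\nu t}X_0 - \theta\int_0^t S_{\nu(t-s)}F(X_s)\,\D s,
\end{equation*}
take squared norms and expectations, control the drift integral using the upper bound on $\EV*{\norm*{X_s}^2}$, and integrate over $[0,T]$ to obtain
\begin{equation*}
\int_0^T \EV*{\norm*{\int_0^t S_{\nu(t-s)}B\,\D W_s}^2}\,\D t \le C_1 + C_2\int_0^T \EV*{\norm*{X_t}^2}\,\D t,
\end{equation*}
with constants $C_1, C_2$ independent of $\nu$. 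The left-hand side is bounded from below by $\norm*{B\inv}^{-2}\int_0^T \phi(\nu,t)\,\D t$, which by the monotonicity of $t\mapsto\phi(\nu,t)$ and \eqref{eq:compatibility} is at least $(T/2)\norm*{B\inv}^{-2}\phi(\nu,T/2)\gtrsim \phi(\nu)$. Since $\phi(\nu)\to\infty$, the additive constant $C_1$ is absorbed for $\nu$ small enough, and the lower bound follows.
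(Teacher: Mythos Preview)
Your argument is correct, and the lower bound is handled by a genuinely different---and in some ways cleaner---route than the paper's. The paper uses the Da~Prato--Debussche splitting $X_t=\bar X_t+\tilde X_t$ and bounds $\EV{\norm{\tilde X_t}^2}$ via Gronwall in terms of $\int_0^t\EV{\norm{\bar X_s}^2}\,\D s$ (Lemma~\ref{lem:controlnon-linear}); to make the pointwise estimate $\EV{\norm{X_t}^2}\ge\tfrac12\EV{\norm{\bar X_t}^2}-\EV{\norm{\tilde X_t}^2}$ produce a positive multiple of $\phi(\nu,t)$ it must restrict to a small time interval $[0,\tau]$ where the Gronwall constant is beaten by the linear part, and only afterwards invokes \eqref{eq:compatibility} to recover the full $[0,T]$. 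Your ``backwards'' reading of the mild formula sidesteps this: by writing $\bar X_t=X_t-S_{\nu t}X_0-\theta\int_0^tS_{\nu(t-s)}F(X_s)\,\D s$ and bounding the drift term directly by $\int_0^T\EV{\norm{X_s}^2}\,\D s$ via Fubini, you obtain the integrated inequality without any small-time restriction. One minor remark: the phrase ``control the drift integral using the upper bound on $\EV{\norm{X_s}^2}$'' is slightly misleading---the displayed inequality follows just from the linear growth of $F$, contractivity of $S_t$, Cauchy--Schwarz in time and Fubini, without invoking the already-proved upper bound $\EV{\norm{X_s}^2}\lesssim 1+\phi(\nu,s)$ at that step; the upper bound is only needed to know the quantities are finite.
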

	The key mathematical challenge is to control the fluctuations of the observed Fisher information $\mathcal{I}_\nu$ around its expectation. Due to the presence of $F$, the dependence of $\mathcal{I}_\nu$ on the underlying Gaussian process $W_t$ is non-linear and very implicit.  By combining the Poincaré inequality from Gaussian analysis with the Da Prato - Debussche Trick to treat the non-linearity, we are able to prove Claims \ref{claim:1} and \ref{claim:2}. The proofs of the subsequent lemmas are postponed to Section \ref{sec:Proofs_CLT}.

	% starting in $0$, it immediately follows that $\mathcal{M}_\nu/\EV*{\mathcal{I}_\nu}^{1/2}=\mathcal{O}_{\prob*{}}(1)$. Using the Da Prato-Debussche trick, we proceed by controlling $\EV*{\mathcal{I}_\nu}$ and showing that $\EV*{\mathcal{I}_\nu}/\mathcal{I}_\nu = \mathcal{O}_{\prob*{}}(1)$. Using Slutzky's Lemma, we will obtain a rate of convergence of $\EV*{\mathcal{I}_\nu}^{-1/2}$ in Theorem \ref{thm:non-linearrate}, valid for general $\mathcal{H}$ and any $F$ satisfying Assumption \ref{assump:general_F}. Under additional Assumptions, the Poincaré inequality from Malliavin calculus will allow us to show the stronger statement that $\EV*{\mathcal{I}_\nu}/\mathcal{I}_\nu \to 1$ in probability as $\nu\to 0$. This allows us to apply the Martingale Convergence Theorem (Theorem 5.5.4 of \cite{Liptser1989}) to obtain
	%\begin{equation*}
	%	\frac{\mathcal{M}_\nu}{\EV*{\mathcal{I}_\nu}^{1/2}}\xrightarrow{\nu\to 0} N(0,1)
	%\end{equation*}
	%in distribution.	Another application of Slutzky's Lemma then yields  the CLT Theorem \ref{thm:CLTnon-linear} for sub-linearly growing $F$ of Nemytskii-type.
	
		\subsubsection{Poincaré inequality}
		
		Given Assumption \ref{assump:specific_F}, the Poincaré inequality (Proposition 3.1 of \cite{Nourdin2009}) applies and yields
		\begin{equation*}
			\var\left(\mathcal{I}_\nu\right)\le\EV*{\norm*{\mathcal{D}\mathcal{I}_\nu}_{\mathfrak{H}}^2}.
		\end{equation*}
		Recalling that $\mathfrak{H}=L^2([0,T],\mathbb{H})$ and applying the chain rule of Malliavin calculus (Proposition 1.2.3 of \cite{Nualart2009}), it suffices to control
		\begin{equation*}
			\EV*{\norm*{\mathcal{D}\mathcal{I}_\nu}_{\mathfrak{H}}^2} = \EV*{\int_0^T \norm*{\int_\tau^T\int_\Lambda \frac{f(X_t(y))f'(X_t(y))}{\sigma(y)^2}\mathcal{D}_\tau X_t(y)\,\D y\,\D t}_{\mathbb{H}}^2\,\D \tau}.
		\end{equation*}
		The key estimate is given by the following result.
		\begin{lemma}\label{lem:boundedness_convergence}
			Grant Assumptions \ref{assump:generalA} and \ref{assump:specific_F}. For any $\phi\in L^2(\Lambda)$ with $\norm*{\phi}=1$, $\tau\in[0,T]$ and $t\in[0,T]$ we have
			\begin{equation*}
				\norm*{\int_\Lambda \phi(y)\mathcal{D}_\tau X_t(y)\,\D y}_{\mathbb{H}}\le \norm*{B}e^{\norm*{f'}_\infty t},
			\end{equation*}
independently of the realization of $X$.
		\end{lemma}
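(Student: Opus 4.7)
The plan is to exploit the integral equation satisfied by the Malliavin derivative $\mathcal{D}_\tau X_t$ under Assumption \ref{assump:specific_F}, reduce matters to a Gronwall-type inequality on a suitable family of test functions, and extract the stated exponential bound. The case $\tau \ge t$ is immediate since then $\mathcal{D}_\tau X_t \equiv 0$, so I fix $\tau \in [0,t)$ in what follows.

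For every $v \in L^2(\Lambda)$ I would introduce the auxiliary linear functional
\[
  U_t^\tau(v) \coloneqq \int_\Lambda v(y)\,\mathcal{D}_\tau X_t(y)\,\D y \;\in\; \mathbb{H},
\]
which coincides with the quantity to be bounded when $v = \phi$. Inserting the representation of $\mathcal{D}_\tau X_t(y)$ from Assumption \ref{assump:specific_F}, applying Fubini, and using the kernel identity $\int_\Lambda v(y) G_{\nu(t-s)}(y,\eta)\,\D y = [S_{\nu(t-s)}^\ast v](\eta)$ granted by Assumption \ref{assump:generalA} yields the self-similar relation
\[
  U_t^\tau(v) \;=\; S_{\nu(t-\tau)}^\ast v \;+\; \int_\tau^t U_s^\tau\bigl(f'(X_s)\,S_{\nu(t-s)}^\ast v\bigr)\,\D s.
\]

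Since $B$ is multiplicative, $\|\cdot\|_{\mathbb{H}} = \|B^\ast(\cdot)\|_{L^2}$, and the first summand satisfies $\|S_{\nu(t-\tau)}^\ast v\|_{\mathbb{H}} \le \|B\|\,\|v\|_{L^2}$ by the contractivity of the adjoint semigroup on $L^2$. I then set $h^\tau(t) \coloneqq \sup_{\|v\|_{L^2}\le 1} \|U_t^\tau(v)\|_{\mathbb{H}}$ and use the pathwise estimate $\|f'(X_s)\,S_{\nu(t-s)}^\ast v\|_{L^2} \le \|f'\|_\infty\,\|v\|_{L^2}$ inside the integral. Taking the supremum over $\|v\|_{L^2}\le 1$ produces
\[
  h^\tau(t) \;\le\; \|B\| \;+\; \|f'\|_\infty \int_\tau^t h^\tau(s)\,\D s,
\]
and Gronwall's lemma yields $h^\tau(t) \le \|B\|\,e^{\|f'\|_\infty(t-\tau)} \le \|B\|\,e^{\|f'\|_\infty t}$. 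Specialising to $v = \phi$ closes the argument; the bound is pathwise because $\|f'\|_\infty$ is a deterministic constant.

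The hard part will be the coupling between the outer time $t$ and the inner integration variable $s$ through the kernel factor $S_{\nu(t-s)}^\ast v$: a direct Gronwall argument on $t \mapsto \|U_t^\tau(\phi)\|_{\mathbb{H}}$ for a fixed test function $\phi$ is not viable, since the right-hand side would involve $U_s^\tau$ evaluated against an $s$-dependent function rather than $\phi$ itself. Passing to the supremum $h^\tau(t)$ exploits the linearity of $v \mapsto U_t^\tau(v)$ together with the uniform $L^2$-contractivity of the semigroup to sweep the whole family of test functions at once. Measurability of $h^\tau(\cdot)$, needed for the Gronwall step, can either be verified by restricting the supremum to a countable dense subset of $L^2(\Lambda)$ or bypassed via a Picard iteration on the defining integral equation, and neither poses a serious obstacle.
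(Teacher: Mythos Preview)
Your proposal is correct and follows essentially the same strategy as the paper: the decisive observation that one must pass to the supremum over unit test functions (because the integral equation replaces $\phi$ by the $s$-dependent function $f'(X_s)\,S_{\nu(t-s)}^\ast\phi$) is exactly what the paper exploits. The paper carries out the Picard iteration explicitly---proving the bound for each iterate by induction and then establishing convergence to $\mathcal{D}_\tau X_t$---whereas you package the same reasoning as a Gronwall step and relegate the Picard argument to a technical remark; note that the Picard route is not merely a measurability fix but also supplies the a~priori finiteness of $h^\tau(t)$ needed for Gronwall to bite.
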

		%This allows us to prove Claim \ref{claim:2}, as the following Lemma shows.
		\begin{lemma}\label{lem:Variance_Bound}
			Grant Assumptions \ref{assump:generalA}, \ref{assump:specific_F} and assume  $\EV*{\mathcal{I}_\nu}\to \infty$ as $\nu\to0$. Then $\var\left(\mathcal{I}_\nu\right)\lesssim \EV*{\mathcal{I}_\nu}$. In particular, Claim \ref{claim:1} holds.
		\end{lemma}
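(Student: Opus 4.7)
The plan is to apply the Gaussian Poincar\'e inequality $\var(\mathcal{I}_\nu)\le\EV*{\norm*{\mathcal{D}\mathcal{I}_\nu}_{\mathfrak{H}}^2}$ and then bound the Malliavin derivative pathwise using Lemma \ref{lem:boundedness_convergence}. Starting from the expression for $\EV*{\norm*{\mathcal{D}\mathcal{I}_\nu}_{\mathfrak{H}}^2}$ just displayed above the statement, I would fix $\tau\in[0,T]$ and, for each $t\in[\tau,T]$, rewrite the inner spatial integral as $\int_\Lambda g_t(y)\mathcal{D}_\tau X_t(y)\,\D y$ with $g_t(y):=f(X_t(y))f'(X_t(y))/\sigma(y)^2$. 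Splitting $g_t=\norm*{g_t}\phi_t$ with $\norm*{\phi_t}=1$ and invoking Lemma \ref{lem:boundedness_convergence} yields the $\omega$-wise bound
\begin{equation*}
\norm*{\int_\Lambda g_t(y)\mathcal{D}_\tau X_t(y)\,\D y}_{\mathbb{H}}\le \norm*{B}\,e^{\norm*{f'}_\infty T}\,\norm*{g_t}.
\end{equation*}
Since $\abs*{f'}\le\norm*{f'}_\infty$ and $1/\sigma$ is bounded (Assumption \ref{assump:generalA}), a trivial estimate gives $\norm*{g_t}\le C_1\norm*{B\inv F(X_t)}$ for a constant $C_1$ depending only on $\norm*{f'}_\infty$ and $\sigma$.

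Next, I would apply Minkowski's inequality in $\mathbb{H}$ to the outer $t$-integral, absorb the factor of $2$ from the Malliavin chain rule into a constant $C_2$, and then use Cauchy-Schwarz in time,
\begin{equation*}
\norm*{\mathcal{D}_\tau\mathcal{I}_\nu}_{\mathbb{H}}\le C_2\int_\tau^T\norm*{B\inv F(X_t)}\,\D t,\qquad \Bigl(\int_\tau^T\norm*{B\inv F(X_t)}\,\D t\Bigr)^2\le T\,\mathcal{I}_\nu.
\end{equation*}
Squaring and integrating in $\tau$ over $[0,T]$ produces the pathwise estimate $\norm*{\mathcal{D}\mathcal{I}_\nu}_{\mathfrak{H}}^2\le C_2^2T^2\,\mathcal{I}_\nu$. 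Taking expectations and invoking the Poincar\'e inequality gives $\var(\mathcal{I}_\nu)\le C_2^2T^2\,\EV*{\mathcal{I}_\nu}$, which is the first assertion of the lemma.

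For Claim \ref{claim:1}, Chebyshev's inequality together with the bound just derived yields, for any $\epsilon>0$,
\begin{equation*}
\prob*{}\bigl(\abs*{\mathcal{I}_\nu/\EV*{\mathcal{I}_\nu}-1}>\epsilon\bigr)\le \frac{\var(\mathcal{I}_\nu)}{\epsilon^2\EV*{\mathcal{I}_\nu}^2}\lesssim \frac{1}{\epsilon^2\EV*{\mathcal{I}_\nu}}\xrightarrow{\nu\to 0}0,
\end{equation*}
so $\mathcal{I}_\nu/\EV*{\mathcal{I}_\nu}\to 1$ in probability; equivalently, since the limit is a nonzero constant, $\EV*{\mathcal{I}_\nu}/\mathcal{I}_\nu\to 1$ in probability. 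The genuine technical difficulty is not located in the argument above but is fully outsourced to Lemma \ref{lem:boundedness_convergence}: everything hinges on the $\omega$-uniform bound of $\norm*{\int_\Lambda\phi\,\mathcal{D}_\tau X_t\,\D y}_{\mathbb{H}}$ by a $\nu$-independent constant, which is what prevents the variance of $\mathcal{I}_\nu$ from blowing up faster than its mean as $\nu\to 0$ despite the non-linearity in the Malliavin variation equation.
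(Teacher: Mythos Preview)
Your proposal is correct and follows essentially the same approach as the paper: apply the Poincar\'e inequality, normalise $g_t=f(X_t)f'(X_t)/\sigma^2$ and invoke Lemma \ref{lem:boundedness_convergence} to bound the $\mathbb{H}$-norm of the spatial integral by $\norm*{B}e^{\norm*{f'}_\infty T}\norm*{g_t}$, estimate $\norm*{g_t}\lesssim\norm*{B\inv F(X_t)}$, and then integrate in $t$ and $\tau$ using Cauchy--Schwarz to recover $\EV*{\mathcal{I}_\nu}$; the final Chebyshev step is identical. The only cosmetic difference is that the paper applies Cauchy--Schwarz in $t$ before pulling the $\mathbb{H}$-norm inside, whereas you use Minkowski first and Cauchy--Schwarz afterwards---these orderings are equivalent and yield the same bound up to constants.
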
	
		
\subsubsection{Da Prato - Debussche Trick}		

		The Da Prato-Debussche trick \cite{DaPrato2003} (also called splitting argument \cite{Altmeyer2020}) amounts to decomposing the solution $X$ of the semi-linear SPDE \eqref{eq:SPDE} into its linear part $\bar{X}$ and its non-linear part $\tilde{X}$:
		\begin{equation*}
			\bar{X}_t\coloneqq \int_0^t S_{\nu(t-s)} B \,\D W_s,\quad \tilde{X}_t\coloneqq X_t - \bar{X}_t.
		\end{equation*}
			Introduce
		\begin{equation*}
			\bar{\mathcal{I}}_{t,\nu}\coloneqq \int_0^t \norm*{B\inv\bar{X}_s}^2\,\D s.
		\end{equation*}
		%and note that the initial condition $X_0$ does not a play role since $\EV{\norm{S_{\nu t}X_0}^2}\le \EV{\norm{X_0}^2}<\infty$ for all $\nu>0$.
		Then Itô's isometry and \eqref{eq:compatibility} yield for any $t\in[0,T]$ the bounds
		\begin{equation}
			\EV*{\norm{\bar{X}_t}^2}=\int_0^t \norm*{S_{\nu(t-s)}B}_{\operatorname{HS}}^2\,\D s\begin{cases}\le \norm*{B}^2\phi(\nu,t)\\\ge\norm*{B\inv}^{-2} \phi(\nu,t)\end{cases}\sim \phi(\nu,t). \label{eq:linearpart_secondmoment}
		\end{equation}
		Consequently,
		\begin{equation}
			\EV*{\bar{\mathcal{I}}_{t,\nu}}= \int_0^t \int_0^s \norm*{B\inv S_{\nu(s-u)}B}_{\operatorname{HS}}^2\,\D u\D t \sim \int_0^t \phi(\nu,s)\,\D s \begin{cases}\lesssim \phi(\nu),\\\gtrsim t\phi(\nu).\end{cases}\label{eq:rate_linear}
		\end{equation}
	The proof of Claim \ref{claim:2} combines Assumption \ref{assump:general_F} with the Gronwall inequality, allowing us to control $\EV{\mathcal{I}_\nu}$ by its linear part $\EV{\bar{\mathcal{I}}_{T,\nu}}$.

	\begin{lemma}\label{lem:non-linearexpectation}
		Grant Assumptions \ref{assump:general}, \ref{assump:generalA} and \ref{assump:general_F}. Then we have $\EV*{\mathcal{I}_\nu}\sim \EV*{\bar{\mathcal{I}}_{T,\nu}}\sim \phi(\nu)$. In particular, Claim \ref{claim:2} holds.
	\end{lemma}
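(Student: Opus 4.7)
The plan is to establish $\EV{\mathcal{I}_\nu}\sim\phi(\nu)$, since the companion relation $\EV{\bar{\mathcal{I}}_{T,\nu}}\sim\phi(\nu)$ has already been recorded in \eqref{eq:rate_linear}. Assumption~\ref{assump:general_F} together with the boundedness of $B$ and $B\inv$ gives the two-sided sandwich
\begin{equation*}
d^2\norm{B}^{-2}\int_0^T\norm{X_t}^2\,\D t\le\mathcal{I}_\nu\le 2D^2\norm{B\inv}^2\int_0^T\bigl(\norm{X_t}^2+1\bigr)\,\D t,
\end{equation*}
so it is enough to prove $\EV{\int_0^T\norm{X_t}^2\,\D t}\sim\phi(\nu)$.

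For the upper bound I would start from the mild formulation of Assumption~\ref{assump:general} and use the contractivity of $(S_t)$ together with the sublinear growth of $F$ to arrive at $\norm{X_t}\le\norm{X_0}+\norm{\bar X_t}+|\theta|DT+|\theta|D\int_0^t\norm{X_s}\,\D s$. Squaring, taking expectations and applying Cauchy-Schwarz produces an inequality of Gronwall type $\EV{\norm{X_t}^2}\le C(1+\EV{\norm{X_0}^2}+\EV{\norm{\bar X_t}^2})+C\int_0^t\EV{\norm{X_s}^2}\,\D s$ with a constant $C=C(T,\theta,D)$. The bound $\EV{\norm{\bar X_t}^2}\lesssim\phi(\nu,t)\lesssim\phi(\nu)$ from \eqref{eq:linearpart_secondmoment} and from the monotonicity of $\phi(\nu,\cdot)$, the standing hypothesis $\EV{e^{\norm{X_0}^2}}<\infty$ (which in particular forces $\EV{\norm{X_0}^2}<\infty$), and Gronwall's lemma then yield $\sup_{t\in[0,T]}\EV{\norm{X_t}^2}\lesssim\phi(\nu)$, from which the upper bound for $\EV{\mathcal{I}_\nu}$ follows.

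The main obstacle lies in the lower bound: the naive Gronwall estimate controlling $\tilde X$ by $\bar X$ only yields $\EV{\norm{\tilde X_t}^2}\lesssim\phi(\nu)$, which is of the \emph{same} order as $\EV{\norm{\bar X_t}^2}$, so the splitting $\norm{X_t}^2\ge\tfrac12\norm{\bar X_t}^2-\norm{\tilde X_t}^2$ by itself is too weak. The remedy is a self-bounding argument, in which $\tilde X$ is controlled by $X$ rather than by $\bar X$. From $\tilde X_t=S_{\nu t}X_0+\theta\int_0^t S_{\nu(t-s)}F(X_s)\,\D s$, contractivity of $S$, Assumption~\ref{assump:general_F} and Cauchy-Schwarz, one obtains a pointwise bound $\norm{\tilde X_t}^2\le C'\bigl(\norm{X_0}^2+1+\int_0^T\norm{X_s}^2\,\D s\bigr)$ with $C'$ independent of $t$ and of $\nu$. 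Combining this with $\int_0^T\norm{\bar X_t}^2\,\D t\le 2\int_0^T\norm{X_t}^2\,\D t+2\int_0^T\norm{\tilde X_t}^2\,\D t$ and rearranging leads to a deterministic bound of the form $C_1\int_0^T\norm{X_t}^2\,\D t\ge\int_0^T\norm{\bar X_t}^2\,\D t-C_2(\norm{X_0}^2+1)$. Taking expectations and invoking the lower half of \eqref{eq:rate_linear}, which supplies $\EV{\int_0^T\norm{\bar X_t}^2\,\D t}\gtrsim\phi(\nu)$, finally gives $\EV{\int_0^T\norm{X_t}^2\,\D t}\gtrsim\phi(\nu)$ as $\nu\to0$, completing the proof.
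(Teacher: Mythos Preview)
Your proof is correct, and for the lower bound it takes a genuinely different route from the paper. The paper controls the nonlinear part $\tilde X_t$ in terms of $\bar X$ via Lemma~\ref{lem:controlnon-linear}, obtaining $\norm{\tilde X_t}^2\le 3e^{2D\theta t}\bigl(\norm{S_{\nu t}X_0}^2+(D\theta)^2t(\int_0^t\norm{\bar X_s}^2\,\D s+t)\bigr)$, and then exploits the explicit $t^2$ prefactor: for $t$ in a sufficiently small interval $[0,\tau]$ this bound is strictly smaller than $\tfrac12\EV{\norm{\bar X_t}^2}\sim\phi(\nu,t)$, so $\EV{\norm{X_t}^2}\gtrsim\phi(\nu,t)$ pointwise on $[0,\tau]$. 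Integrating over $[0,\tau]$ and invoking the two-sided comparison \eqref{eq:compatibility} to pass from $\phi(\nu,\tau)$ to $\phi(\nu)$ finishes the argument.

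Your self-bounding alternative avoids the small-time restriction entirely: by estimating $\norm{\tilde X_t}^2\lesssim\norm{X_0}^2+1+\int_0^T\norm{X_s}^2\,\D s$ uniformly in $t$ and substituting into $\int_0^T\norm{\bar X_t}^2\,\D t\le 2\int_0^T\norm{X_t}^2\,\D t+2\int_0^T\norm{\tilde X_t}^2\,\D t$, you obtain a clean deterministic inequality on the full interval $[0,T]$ with no tuning of $\tau$ needed. This is arguably more direct. The paper's approach, on the other hand, yields the sharper pointwise-in-$t$ information $\EV{\norm{B\inv F(X_t)}^2}\gtrsim\phi(\nu,t)$ for small $t$, which is of independent use elsewhere (e.g.\ in establishing \eqref{eq:growthbound_L^2}). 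Your upper bound argument is essentially the same as the paper's.
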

	
	\subsubsection{Proof of the CLT and its corollary}\label{subsec:ProofCLT}

	\begin{proof}[Proof of Theorem \ref{thm:CLTnon-linear}]
	Recall the decomposition \eqref{eq:Claim_Main} and that Lemma \ref{lem:Variance_Bound} shows $\EV{\mathcal{I}_\nu}/\mathcal{I}_\nu\xrightarrow{\prob*{}}1$ as $\nu\to0$. By a standard martingale central limit theorem (Theorem 5.5.4 of \cite{Liptser1989}) we conclude $\mathcal{M}_\nu/\EV{\mathcal{I}_\nu}^{1/2}\xrightarrow{d} N(0,1)$. Using Slutzky's Lemma, the asserted CLT follows.	
	The last claim follows from Lemma \ref{lem:non-linearexpectation}.
	\end{proof}

	\begin{proof}[Proof of Corollary \ref{cor:Confidence}]
		The convergence $\mathcal{I}_\nu/\EV{\mathcal{I}_\nu}\xrightarrow{\prob*{}} 1$ from Lemma \ref{lem:Variance_Bound}, Slutzky's Lemma and Theorem \ref{thm:CLTnon-linear} imply
		\begin{equation*}
			\mathcal{I}_\nu^{1/2}(\hat{\theta}_\nu-\theta)\xrightarrow{d}N(0,1),
		\end{equation*}
which yields the asserted asymptotic coverage probability of the confidence interval.
	\end{proof}
	
	\subsection{\textbf{Convergence rate under more general assumptions}}\label{subsec:GeneralRates}
	
	Assumptions \ref{assump:general}, \ref{assump:generalA} and \ref{assump:general_F} already suffice to derive the rate of convergence for $\hat{\theta}_\nu$.
	
\begin{proposition}\label{thm:non-linearrate}
		Grant Assumptions \ref{assump:general}, \ref{assump:generalA} and \ref{assump:general_F}. Then
		\begin{equation*}
			\hat{\theta}_\nu-\theta = \mathcal{O}_{\prob*{}}(\phi(\nu)^{-1/2}).
		\end{equation*}
	\end{proposition}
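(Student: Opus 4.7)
The plan is to work directly with the decomposition $\hat{\theta}_\nu-\theta = \mathcal{M}_\nu/\mathcal{I}_\nu$ from Lemma \ref{lem:MLE_decomp}, recalling that $\mathcal{M}_\nu$ is a continuous $L^2$-martingale with $\langle\mathcal{M}\rangle_T = \mathcal{I}_\nu$. It suffices to show that $\mathcal{M}_\nu = \mathcal{O}_{\prob*{}}(\phi(\nu)^{1/2})$ and that $\mathcal{I}_\nu \ge c\,\phi(\nu)$ with probability tending to one. The numerator bound is immediate from Itô's isometry $\EV*{\mathcal{M}_\nu^2}=\EV*{\mathcal{I}_\nu}$, combined with $\EV*{\mathcal{I}_\nu}\lesssim\phi(\nu)$ from Lemma \ref{lem:non-linearexpectation} and Chebyshev.

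The main task is the lower bound on $\mathcal{I}_\nu$. Assumption \ref{assump:general_F} yields $\mathcal{I}_\nu \ge (d/\norm*{B})^2\int_0^T \norm*{X_t}^2\,\D t$. Splitting $X_t = \bar{X}_t + Y_t$ with $Y_t := S_{\nu t}X_0 + \theta\int_0^t S_{\nu(t-s)}F(X_s)\,\D s$ and using $\norm*{X_t}^2 \ge \tfrac{1}{2}\norm*{\bar{X}_t}^2 - \norm*{Y_t}^2$, contractivity of the semi-group together with the growth estimate $\norm*{F(z)} \le D(\norm*{z}+1)$ produces $\int_0^T\norm*{Y_t}^2\,\D t \le C_1(1+\norm*{X_0}^2) + C_2\int_0^T \norm*{X_s}^2\,\D s$ after one application of Cauchy--Schwarz. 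A rearrangement absorbing the last term into the left-hand side then gives
\begin{equation*}
\mathcal{I}_\nu \ge c_0\int_0^T \norm*{\bar{X}_t}^2\,\D t - R_\nu,
\end{equation*}
for some positive constant $c_0$ and an $\mathcal{O}_{\prob*{}}(1)$ remainder $R_\nu$, relying on the standing exponential moment of $\norm*{X_0}^2$.

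The final step is to show that the Gaussian quadratic functional $\int_0^T\norm*{\bar{X}_t}^2\,\D t$ concentrates at its mean, which is of order $\phi(\nu)$ by \eqref{eq:rate_linear}. A Wick-type computation expresses its variance as $2\int_0^T\!\!\int_0^T\norm*{C_{s,t}}_{\operatorname{HS}}^2\,\D s\,\D t$ with $C_{s,t}=\EV*{\bar{X}_s\otimes\bar{X}_t}$; since $\bar{X}_t = S_{\nu(t-s)}\bar{X}_s + (\text{independent increment})$ for $s\le t$, the induced factorisation of $C_{s,t}$ combined with $\norm*{AB}_{\operatorname{HS}}\le\norm*{A}\norm*{B}_{\operatorname{HS}}$ and the contractivity of $S$ yields $\norm*{C_{s,t}}_{\operatorname{HS}}\lesssim\norm*{S_{\nu\abs*{t-s}}}_{\operatorname{HS}}$, whence $\var\bigl(\int_0^T\norm*{\bar{X}_t}^2\,\D t\bigr)\lesssim\phi(\nu)$. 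As $\phi(\nu)\to\infty$, Chebyshev's inequality then gives $\int_0^T\norm*{\bar{X}_t}^2\,\D t/\phi(\nu)\to 1$ in probability, and combined with the previous paragraph one obtains $\mathcal{I}_\nu\ge c\,\phi(\nu)$ with probability tending to one. Consequently $\phi(\nu)^{1/2}(\hat{\theta}_\nu-\theta) = (\mathcal{M}_\nu/\phi(\nu)^{1/2})\cdot(\phi(\nu)/\mathcal{I}_\nu) = \mathcal{O}_{\prob*{}}(1)$.

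The principal obstacle is precisely this lower bound on $\mathcal{I}_\nu$: without Assumption \ref{assump:specific_F} the Malliavin--Poincaré estimate of Lemma \ref{lem:Variance_Bound} is unavailable, so one must replace sharp variance concentration by a crude Gaussian second-moment computation for the linear part $\bar{X}$ and a Gronwall-type comparison between $\mathcal{I}_\nu$ and its linear counterpart. These ingredients suffice for tightness of $\phi(\nu)^{1/2}(\hat{\theta}_\nu-\theta)$ but fall short of the distributional convergence established in Theorem \ref{thm:CLTnon-linear}.
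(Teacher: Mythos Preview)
Your proposal is correct and follows the same overall skeleton as the paper: bound $\mathcal{M}_\nu$ via It\^o's isometry and $\EV*{\mathcal{I}_\nu}\lesssim\phi(\nu)$ (Lemma~\ref{lem:non-linearexpectation}), then show $\mathcal{I}_\nu\ge c\,\phi(\nu)$ with high probability by comparing with the linear part and controlling the variance of the Gaussian quadratic $\int_0^T\norm*{\bar X_t}^2\,\D t$. The technical routes differ in two places, however. First, for the pathwise lower bound on $\mathcal{I}_\nu$, the paper (Lemma~\ref{lem:inverse_Fisher}) bounds $\tilde X$ in terms of $\bar X$ via Gronwall (Lemma~\ref{lem:controlnon-linear}) and must restrict to a short interval $[0,\tau]$ to make the resulting constant $u_2(\tau)$ strictly below $1/2$; you instead bound $Y=\tilde X$ in terms of $X$ and absorb $C_2\int_0^T\norm*{X_s}^2\,\D s$ into the left-hand side, which works for the full interval $[0,T]$ and is slightly cleaner. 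Second, for the variance of the linear functional the paper (Lemma~\ref{lem:variance_linear}) invokes the Gaussian Poincar\'e inequality, whereas you use the Wick identity $\cov(\norm*{\bar X_s}^2,\norm*{\bar X_t}^2)=2\norm*{C_{s,t}}_{\operatorname{HS}}^2$ together with the factorisation $C_{s,t}=C_{s,s}S_{\nu(t-s)}^\ast$ and the uniform operator-norm bound $\norm*{C_{s,s}}\le T\norm*{B}^2$; this is more elementary and makes transparent that only Assumptions~\ref{assump:general} and~\ref{assump:generalA} are needed here. Both approaches yield the same conclusion; yours avoids Malliavin calculus entirely, at the cost of a slightly less sharp constant in the absorption step.
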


%	\begin{example}[continued]\todo{remove example?}
%	 In particular, Theorem \ref{thm:non-linearrate} holds for the fractional heat equation with Nemytskii non-linearity \eqref{eq:Guiding_Example} under the condition that $\abs*{f(x)}\ge d\abs*{x}$. The rate is given by $\phi(\nu)^{-1/2}\sim\nu^{1/(2\alpha)}$.
	 % The global growth bounds can be replaced by local ones in the area, where the the solution process $X$ takes values.\todo{make precise}
%	\end{example}

	The statement of Proposition\ref{thm:non-linearrate} also holds for general separable Hilbert spaces $\mathcal{H}$  without imposing that $A$ gives rise to a Green function, that $B^{-1}$ is bounded and that $F$ is of Nemytskii-type. The main assumption is just that the operators $A$ and $B$ share the same eigenspaces.

	\begin{proposition}\label{lem:general_rate}
		Let $\mathcal{H}$ be a separable Hilbert-space and let $B$ be a bounded linear operator. Denote $\mathcal{H}_B\coloneqq\operatorname{dom}(B\inv)$ with the inner product $\iprod*{}{}_B\coloneqq\iprod*{B\inv\MTemptyplaceholder}{B\inv \MTemptyplaceholder}$. Grant Assumption \ref{assump:general} and assume that $(X_t)_{t\in[0,T]}$ takes values in $\mathcal{H}_B$. Assume further that $A$ and $B$ have a common set of eigenfunctions $(\mu_k)_{k\in\mathbb{N}}$, forming a complete orthonormal system of $\mathcal{H}$, and  that $F\colon \mathcal{H}_B\to\mathcal{H}_B$ satisfies
		\begin{equation*}
			d\norm*{z}_{B}\le \norm*{F(z)}_B\le D(\norm*{z}_{B}+1),\quad z\in\mathcal{H}_B,
		\end{equation*}
		for constants $0<d\le D<\infty$ and $\EV{\norm*{X_0}_B^2}<\infty$. Then
		\begin{equation*}
			\hat{\theta}_\nu-\theta = \mathcal{O}_{\prob*{}}(\phi(\nu)^{-1/2}).
		\end{equation*}
	\end{proposition}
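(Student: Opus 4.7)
The approach would be to control numerator and denominator in $\hat{\theta}_\nu - \theta = \mathcal{M}_\nu/\mathcal{I}_\nu$ from \eqref{eq:MLE_decmop} separately. The plan is to establish (i) $\EV*{\mathcal{I}_\nu}\lesssim \phi(\nu)$, which through the Itô isometry $\EV*{\mathcal{M}_\nu^2}=\EV*{\mathcal{I}_\nu}$ and Markov's inequality gives $\mathcal{M}_\nu = \mathcal{O}_{\prob*{}}(\phi(\nu)^{1/2})$, and (ii) $\mathcal{I}_\nu/\phi(\nu)$ is bounded away from $0$ in probability. Dividing (i) by (ii) then yields the claimed rate.

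For (i), the shared-eigenfunction hypothesis implies that $S_{\nu t}$ commutes with $B$ and $B\inv$ on $\mathcal{H}_B$ and hence acts as a contraction in $\norm*{\MTemptyplaceholder}_B$. Applying the upper linear bound $\norm*{F(z)}_B \le D(\norm*{z}_B + 1)$ inside the variation-of-constants formula and running a Minkowski--Gronwall argument on $u(t)\coloneqq\EV*{\norm*{X_t}_B^2}^{1/2}$ yields $u(t)^2 \lesssim 1 + \EV*{\norm*{\bar{X}_t}_B^2}$. Commutation of $B\inv$ with the semi-group gives $\EV*{\norm*{\bar{X}_t}_B^2} = \int_0^t \norm*{S_{\nu(t-s)}}_{\operatorname{HS}}^2\,\D s = \phi(\nu,t)$, and integration in $t$ delivers (i).

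For (ii), the lower linear growth $\norm*{F(z)}_B \ge d\norm*{z}_B$ reduces the task to $\int_0^T\norm*{X_t}_B^2\,\D t$. The plan is to apply the Da Prato--Debussche splitting $X=\bar{X}+\tilde{X}$ together with the elementary inequality $(a+b)^2 \ge a^2/2 - b^2$, and then to invoke a second Gronwall-type estimate $\norm*{\tilde{X}_t}_B^2 \lesssim 1+\norm*{X_0}_B^2+\int_0^t\norm*{X_s}_B^2\,\D s$ (obtained from the upper bound on $F$ and Cauchy--Schwarz, using again the $\norm*{\MTemptyplaceholder}_B$-contractivity of $S_{\nu t}$). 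After rearrangement this produces deterministic constants $c_1,c_2>0$, depending only on $T,\theta,D$, such that
\begin{equation*}
	\int_0^T\norm*{X_t}_B^2\,\D t \ge c_1\,\bar{\mathcal{I}}_{T,\nu} - c_2\bigl(1+\norm*{X_0}_B^2\bigr).
\end{equation*}
It is therefore sufficient to establish $\bar{\mathcal{I}}_{T,\nu}\gtrsim \phi(\nu)$ in probability.

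The decisive step, and the one I expect to be the main obstacle, is exactly this concentration of $\bar{\mathcal{I}}_{T,\nu}$: the Malliavin--Poincaré route of Lemma \ref{lem:Variance_Bound} is unavailable here, since we have neither the Green function nor the Nemytskii structure of Assumption \ref{assump:specific_F}. The shared-eigenbasis hypothesis, however, carries everything. Expanding $\bar{X}_t$ in $(\mu_k)_{k\in\mathbb{N}}$, with $A\mu_k=a_k\mu_k$ and $B\mu_k=b_k\mu_k$, one checks that the coordinates $Z_t^k\coloneqq b_k\inv\iprod*{\bar{X}_t}{\mu_k}$ are \emph{independent} real Ornstein--Uhlenbeck processes, so that
\begin{equation*}
	\bar{\mathcal{I}}_{T,\nu}=\sum_{k\ge 1}\int_0^T (Z_t^k)^2\,\D t
\end{equation*}
is an infinite sum of independent non-negative variables with mean $\int_0^T\phi(\nu,s)\,\D s\sim\phi(\nu)$. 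Combining Isserlis' Gaussian formula with $\cov(Z_s^k,Z_t^k)^2 \le \EV*{(Z_s^k)^2}\EV*{(Z_t^k)^2}$ and the uniform bound $\sup_t\EV*{(Z_t^k)^2}\le T$ yields $\var(\bar{\mathcal{I}}_{T,\nu}) \lesssim T^2\,\EV*{\bar{\mathcal{I}}_{T,\nu}}\lesssim \phi(\nu)$, so Chebyshev's inequality gives $\bar{\mathcal{I}}_{T,\nu}/\phi(\nu)\xrightarrow{\prob*{}}1$, which in view of the preceding paragraphs closes the plan.
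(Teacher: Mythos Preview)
Your plan matches the paper's proof: it too commutes $S_t$ with $B$ to get $\EV{\norm{\bar X_t}_B^2}=\phi(\nu,t)$, runs a Gronwall argument on $\tilde X$ in the $\norm{\cdot}_B$-norm, and for the variance of $\bar{\mathcal I}_{T,\nu}$ expands in the common eigenbasis and applies Wick's formula to the independent Gaussian coordinates, bounding the covariances exactly as you do. The only cosmetic difference is that the paper bounds $\tilde X$ in terms of $\bar X$ (picking up an exponential $e^{D\theta t}$ from Gronwall) and then restricts to a short interval $[0,\tau]$ to keep that factor small, whereas your absorption of $\int_0^t\norm{X_s}_B^2\,\D s$ into the left-hand side sidesteps that restriction.
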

%	\begin{remark}
%		Note that, in contrast the the CLT \eqref{thm:CLTnon-linear}, the convergence rate of Theorem \ref{thm:non-linearrate} also holds for a non-linearity $F$ that is not of Nemytskii-type. It can also be proved for general separable Hilbert spaces $\mathcal{H}$ if we replace Assumption \ref{assump:generalA} by the condition that $A$ and $B$ are jointly diagonalizable. The proof follows as in the case presented here, the only difference being that we calculate Gaussian covariances using Wick's law (Theorem 1.28 of \cite{Janson1997}) instead of Malliavin calculus when proving Lemma \ref{lem:variance_linear} (below).
%	\end{remark}
	%\setcounter{example}{0}

\section{Other observation schemes}\label{sec:discretize}
	
The MLE $\hat{\theta}_\nu$ from Lemma \ref{lem:MLE_decomp} uses full spatial and temporal information of the solution process $(X_t)_{t\in[0,T]}$. Here we show that it suffices to have local or discrete information in space and time. Local information also allow for non-parametric estimation of space-and time-varying $\theta(y,t)$. We assume throughout this section that $X_t\in C(\Lambda)$ for all $t\in (0,T]$. The proofs are postponed to Section \ref{sec:Obs_schemes_proofs}.
		
\subsection{\textbf{Localized perspective}}\label{subsec:Localize_Perspective}

Fix $y_0\in \Lambda$, $t_0\in (0,T)$, $0<\delta_y\le 1$ and $0<\delta_t\le \min(t_0,T-t_0)$. Set $\Lambda_{\delta_y}(y_0)\coloneqq y_0 + \delta_y(\Lambda-y_0)$ and define the localised  inner product
\[\iprod*{u}{v}_{\delta_y}\coloneqq \int_{\Lambda_{\delta_y}(y_0)} u(y)v(y)\,\D y,\quad u,v\in L^2(\Lambda_{\delta_y}(y_0)).\]

%We are given observations of $X=(X_t)_{t\in[0,T]}$ and $AX$ \todo{added!} only in the space-time window $\Lambda_{\delta_y}(y_0)\times (t_0-\delta_t,t_0+\delta_t)$.
Fix a complete orthonormal system $(u_k)_{k\in\mathbb{N}}$ of $L^2(\Lambda_{\delta_y}(y_0))$ such that the extensions $(\tilde{u}_k)_{k\in\mathbb{N}}$ by zero belong to $\operatorname{dom}(A^\ast B\inv)$ for each $k\in\mathbb{N}$.
%Take another basis $(v_k)_{k\in\mathbb{N}}$ of $L^2(\Lambda\setminus \Lambda_{\delta_y}(y_0))$ such that the corresponding extensions $(\tilde{v}_k)_{k\in\mathbb{N}}$ also belong to $\operatorname{dom}(A^\ast B\inv)$. \todo{condition on $\Lambda$?} Then $(j_k)_{k\in\mathbb{N}}\coloneqq (\tilde{u}_k)_{k\in\mathbb{N}}\cup (\tilde{v}_k)_{k\in\mathbb{N}}\subset \operatorname{dom}(A^\ast B\inv)$ form an ONB of $L^2(\Lambda)$.
In the spirit of the MLE $\hat\theta_\nu$ define the localised estimator
		\begin{equation*}
			\hat{\theta}_{\nu,\delta_t,\delta_y}^{\operatorname{loc}}\coloneqq \frac{\int_{t_0-\delta_t}^{t_0+\delta_t}\iprod*{B\inv F(X_s)}{B\inv [\D X_s - \nu AX_s\,\D s]}_{\delta_y}}{\int_{t_0-\delta_t}^{t_0+\delta_t}\norm*{B\inv F(X_s)}_{\delta_y}^2\,\D s},
		\end{equation*}
where the numerator is defined by
		\begin{equation*}
\sum_{k=1}^\infty \left[ \int_{t_0-\delta_t}^{t_0+\delta_t} \iprod*{B\inv F(X_s)}{\tilde{u}_k}\D\iprod*{\tilde{u}_k}{B\inv X_s}-\nu\int_{t_0-\delta_t}^{t_0+\delta_t} \iprod*{B\inv F(X_s)}{\tilde{u}_k}\iprod*{X_s}{A^\ast B\inv \tilde{u}_k}\,\D s\right].
		\end{equation*}
Then $\hat{\theta}_{\nu,\delta_t,\delta_y}^{\operatorname{loc}}$ satisfies under ${\prob*{}}_\theta$ the representation
		\begin{equation*}
			\hat{\theta}_{\nu,\delta_t,\delta_y}^{\operatorname{loc}}-\theta = \frac{\int_{t_0-\delta_t}^{t_0+\delta_t}\iprod*{B\inv F(X_s)}{\D W_s}_{\delta_y}}{\int_{t_0-\delta_t}^{t_0+\delta_t}\norm*{B\inv F(X_s)}_{\delta_y}^2\,\D s}=\colon\frac{\mathcal{M}_{\nu,\delta_t,\delta_y}}{\mathcal{I}_{\nu,\delta_t,\delta_y}}.
		\end{equation*}
		Note that if $B\inv$, $F$ and $A^\ast$ are local operators, then $\hat{\theta}_{\nu,\delta_t,\delta_y}^{\operatorname{loc}}$ only requires observing $X$ on the space-time window $\Lambda_{\delta_y}(y_0)\times (t_0-\delta_t,t_0+\delta_t)$.
		
		We assume a scaling of the localised observed Fisher information in analogy to Claim \ref{claim:2} above. Two typical cases in which the following assumption holds will be presented in Lemmas \ref{lem:Assumptions_hold_a} and \ref{lem:Assumptions_hold_b} below.
		
\begin{assumption}\label{assump:Non_para_too_strong}
			We have
			\begin{equation*}
				c_1\delta_y^d\phi(\nu,t)\le \int_{\Lambda_{\delta_y}(y_0)}\EV*{X_t(y)^2}\,\D y \le c_2\delta_y^d\phi(\nu,t),\quad t\in (t_0-\delta_t,t_0+\delta_t),
			\end{equation*}
			with constants $0<c_1\le c_2<\infty$.%, which do not depend on $\delta_y$.
\end{assumption}

The following proposition shows that indeed local spatio-temporal information suffices to recover $\theta$.

\begin{proposition}\label{prop:CLT_Localized}
	Grant Assumptions \ref{assump:generalA} and \ref{assump:specific_F}. If $\EV*{\mathcal{I}_{\nu,\delta_t,\delta_y}}\to\infty$ as $\nu\to 0$, where $\delta_t$ and $\delta_y$ may vary with $\nu$, then
	\begin{equation*}
		\mathcal{I}_{\nu,\delta_t,\delta_y}^{1/2}(\hat{\theta}_{\nu,\delta_t,\delta_y}^{\operatorname{loc}}
-\theta)\xrightarrow{d}N(0,1)\text{ and } \mathcal{I}_{\nu,\delta_t,\delta_y}/\EV*{\mathcal{I}_{\nu,\delta_t,\delta_y}}\xrightarrow{\prob*{}}1.
	\end{equation*}
Under Assumptions \ref{assump:general_F} and \ref{assump:Non_para_too_strong} we have $\EV*{\mathcal{I}_{\nu,\delta_t,\delta_y}}\sim\delta_y^d \delta_t\phi(\nu)$.
\end{proposition}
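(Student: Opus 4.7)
The plan is to transcribe the argument of Theorem~\ref{thm:CLTnon-linear} to the localised setting. Write
\[
\mathcal{I}_{\nu,\delta_t,\delta_y}^{1/2}\bigl(\hat{\theta}^{\operatorname{loc}}_{\nu,\delta_t,\delta_y}-\theta\bigr) = \frac{\mathcal{M}_{\nu,\delta_t,\delta_y}}{\EV*{\mathcal{I}_{\nu,\delta_t,\delta_y}}^{1/2}}\cdot\Bigl(\frac{\EV*{\mathcal{I}_{\nu,\delta_t,\delta_y}}}{\mathcal{I}_{\nu,\delta_t,\delta_y}}\Bigr)^{1/2}.
\]
Since $(\mathcal{M}_{\nu,\delta_t,s})_{s\in[t_0-\delta_t,t_0+\delta_t]}$ is a continuous martingale whose terminal quadratic variation equals $\mathcal{I}_{\nu,\delta_t,\delta_y}$, once the ratio $\mathcal{I}_{\nu,\delta_t,\delta_y}/\EV*{\mathcal{I}_{\nu,\delta_t,\delta_y}}$ is shown to converge to one in $\prob*{}$-probability, the martingale CLT (Theorem~5.5.4 of \cite{Liptser1989}) and Slutzky's lemma deliver both the asserted CLT and the ratio statement.

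The ratio convergence is established in the same spirit as Lemma~\ref{lem:Variance_Bound}. By the chain rule of Malliavin calculus,
\[
\mathcal{D}_\tau\mathcal{I}_{\nu,\delta_t,\delta_y} = 2\int_{(t_0-\delta_t)\vee\tau}^{t_0+\delta_t}\int_\Lambda \phi_s(y)\,\mathcal{D}_\tau X_s(y)\,\D y\,\D s,\qquad \phi_s\coloneqq \tfrac{f(X_s)f'(X_s)}{\sigma^2}\indicator_{\Lambda_{\delta_y}(y_0)}.
\]
Applying Lemma~\ref{lem:boundedness_convergence} to the normalised test function $\phi_s/\norm*{\phi_s}$ bounds the inner $\mathbb{H}$-norm by $\norm*{\phi_s}\,\norm*{B}e^{\norm*{f'}_\infty T}$. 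A Cauchy--Schwarz step on the temporal integral, whose length is at most $2\delta_t$ uniformly in $\tau$, yields
\[
\norm*{\mathcal{D}_\tau\mathcal{I}_{\nu,\delta_t,\delta_y}}_\mathbb{H}^2 \lesssim \delta_t\int_{(t_0-\delta_t)\vee\tau}^{t_0+\delta_t}\norm*{\phi_s}^2\,\D s.
\]
Integrating in $\tau\in[0,t_0+\delta_t]$, swapping the order of integration, and using that $f'$ and $1/\sigma$ are bounded, one obtains $\EV*{\norm*{\mathcal{D}\mathcal{I}_{\nu,\delta_t,\delta_y}}_\mathfrak{H}^2}\lesssim \delta_t\,\EV*{\mathcal{I}_{\nu,\delta_t,\delta_y}}$. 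The Gaussian Poincaré inequality and Chebyshev's inequality then give
\[
\prob*{\abs*{\mathcal{I}_{\nu,\delta_t,\delta_y}/\EV*{\mathcal{I}_{\nu,\delta_t,\delta_y}}-1}>\epsilon}\lesssim \frac{\delta_t}{\epsilon^2\,\EV*{\mathcal{I}_{\nu,\delta_t,\delta_y}}}\longrightarrow 0,
\]
because $\delta_t\le T$ and $\EV*{\mathcal{I}_{\nu,\delta_t,\delta_y}}\to\infty$ by assumption.

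For the scaling, the Nemytskii structure $F=f\circ\MTemptyplaceholder$ enforced by Assumption~\ref{assump:specific_F}, combined with Assumption~\ref{assump:general_F}, furnishes the pointwise two-sided bound $c|x|\le |f(x)|\le C(|x|+1)$ (plugging piecewise-constant $z$ into the $L^2$-inequality of Assumption~\ref{assump:general_F} transfers it to $f$ itself). Integrating this pointwise bound over $\Lambda_{\delta_y}(y_0)$ and using Assumption~\ref{assump:Non_para_too_strong} gives
\[
\EV*{\int_{\Lambda_{\delta_y}(y_0)}\frac{f(X_s(y))^2}{\sigma(y)^2}\,\D y}\sim \delta_y^d\,\phi(\nu,s),\quad s\in(t_0-\delta_t,t_0+\delta_t).
\]
Since $t_0\in(0,T)$ is fixed, \eqref{eq:compatibility} gives $\phi(\nu,s)\sim\phi(\nu)$ uniformly on this interval, so integration in $s$ returns $\EV*{\mathcal{I}_{\nu,\delta_t,\delta_y}}\sim\delta_y^d\delta_t\,\phi(\nu)$.

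The main technical hurdle is the Malliavin variance bound. It is crucial that Lemma~\ref{lem:boundedness_convergence} supplies an upper bound on $\norm*{\int_\Lambda\phi\,\mathcal{D}_\tau X}_\mathbb{H}$ that is oblivious to the support and volume $\delta_y^d$ of $\phi$ once $\norm*{\phi}$ is extracted; this way the Cauchy--Schwarz step contributes only the temporal-window factor $\delta_t$, which is in turn dominated by the growth of $\EV*{\mathcal{I}_{\nu,\delta_t,\delta_y}}$ granted by the hypothesis. This insensitivity is exactly what allows $\delta_y$ and $\delta_t$ to depend on $\nu$ without destroying the variance control.
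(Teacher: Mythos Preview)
Your proposal is correct and follows essentially the same route as the paper: Poincar\'e inequality combined with Lemma~\ref{lem:boundedness_convergence} to get $\var(\mathcal{I}_{\nu,\delta_t,\delta_y})\lesssim\EV*{\mathcal{I}_{\nu,\delta_t,\delta_y}}$, then the martingale CLT and Slutzky; for the scaling, Assumption~\ref{assump:Non_para_too_strong} plus the growth bounds on $F$. Your extraction of pointwise bounds $d|x|\le|f(x)|\le D(|x|+1)$ from the Nemytskii structure and Assumption~\ref{assump:general_F} is a useful clarification that the paper leaves implicit (it simply cites ``the assumptions on $B$ and $F$ from Assumption~\ref{assump:general_F}'' without saying how the global $L^2$-bound localises).
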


%		\begin{assumption}\label{assump:helper}
%		The kernel $G_t$ of $S_t$ satisfies for  $\delta_y^d\phi(\nu,t_0-\delta_t)$ large enough
%		\begin{equation}
%			c_3\delta_y^d\phi(\nu,t)\le %\int_0^t\int_{\Lambda_{\delta_y}(y_0)}\norm*{G_{\nu(t-s)}(y,\MTemptyplaceholder)}^2\,\D y\D s %\le c_4\delta_y^d\phi(\nu,t),\quad t\in (t_0-\delta_t,t_0+\delta_t), %\label{eq:assump_nonpara_a}
%		\end{equation}
%		where the constants $0<c_3,c_4<\infty$ neither depend on $t,\nu$ nor $\delta_y$.
%		\end{assumption}

Sufficient conditions for Assumption \ref{assump:Non_para_too_strong} to hold will be deduced from the fact that a local property of the Green function $G$ implies the required scaling for the linear part $\bar{X}_t$.

		\begin{lemma}\label{lem:Sclaing_Linear_Part}
		Assume that the kernel $G_t$ of $S_t$ satisfies for  $\delta_y^d\phi(\nu,t_0-\delta_t)$ large enough
		\begin{equation}
			c_3\delta_y^d\phi(\nu,t)\le \int_0^t\int_{\Lambda_{\delta_y}(y_0)}\norm*{G_{\nu(t-s)}(y,\MTemptyplaceholder)}^2\,\D y\D s \le c_4\delta_y^d\phi(\nu,t),\quad t\in (t_0-\delta_t,t_0+\delta_t), \label{eq:assump_nonpara_a}
		\end{equation}
		where the constants $0<c_3,c_4<\infty$ neither depend on $t,\nu$ nor $\delta_y$.
Then the linear part $\bar{X}_t$ satisfies% Assumption \ref{assump:Non_para_too_strong} is satisfied with $c_1(t)=c_3\norm*{B\inv}^{-2}$ and $c_2(t)=c_4\norm*{B}^2$.
			\begin{equation*}
				c_3\norm*{B\inv}^{-2}\delta_y^d\phi(\nu,t)\le \int_{\Lambda_{\delta_y}(y_0)}\EV*{\bar{X}_t(y)^2}\,\D y \le c_4\norm*{B}^2\delta_y^d\phi(\nu,t),\quad t\in (t_0-\delta_t,t_0+\delta_t)
			\end{equation*}
			with $c_1(t)=c_3\norm*{B\inv}^{-2}$ and $c_2(t)=c_4\norm*{B}^2$.
		\end{lemma}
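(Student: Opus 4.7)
The plan is to express $\int_{\Lambda_{\delta_y}(y_0)}\EV*{\bar X_t(y)^2}\,\D y$ via the Wiener isometry as a double integral of the squared kernel norm, then sandwich the weighted kernel norm using the boundedness of $B$ and $B^{-1}$, and finally invoke hypothesis \eqref{eq:assump_nonpara_a}. There is no substantial obstacle: the argument consists of one isometry, Tonelli's theorem and a standard operator-norm sandwich.

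Under Assumption \ref{assump:generalA} the adjoint acts as $S_{\nu(t-s)}^\ast\delta_y=G_{\nu(t-s)}(y,\cdot)$, since $S_tz(y)=\iprod*{G_t(y,\cdot)}{z}$. Applying the Wiener isometry to the cylindrical stochastic integral $\bar X_t=\int_0^t S_{\nu(t-s)}B\,\D W_s$ paired with $\delta_y$ yields
\begin{equation*}
\EV*{\bar X_t(y)^2}=\int_0^t\norm*{B^\ast S_{\nu(t-s)}^\ast\delta_y}^2\,\D s=\int_0^t\norm*{B^\ast G_{\nu(t-s)}(y,\cdot)}^2\,\D s.
\end{equation*}
Alternatively, this follows directly from the Skorokhod representation of Remark \ref{rmk:Skorokhod_Intergal} together with the identity $\norm*{u}_{\mathfrak{H}}^2=\int_0^T\norm*{B^\ast u_s}^2\,\D s$ for deterministic $u\in\mathfrak{H}$.

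Since $\norm*{B^\ast}=\norm*{B}$ and $\norm*{(B^\ast)^{-1}}=\norm*{B^{-1}}$, the operator inequalities
\begin{equation*}
\norm*{B^{-1}}^{-2}\norm*{z}^2\le\norm*{B^\ast z}^2\le\norm*{B}^2\norm*{z}^2,\qquad z\in L^2(\Lambda),
\end{equation*}
applied pointwise in $s$ to $z=G_{\nu(t-s)}(y,\cdot)$ and integrated in $y\in\Lambda_{\delta_y}(y_0)$ (Tonelli on nonnegative integrands), yield
\begin{equation*}
\norm*{B^{-1}}^{-2}J(\nu,t)\le\int_{\Lambda_{\delta_y}(y_0)}\EV*{\bar X_t(y)^2}\,\D y\le\norm*{B}^2\,J(\nu,t),
\end{equation*}
where $J(\nu,t)\coloneqq\int_0^t\int_{\Lambda_{\delta_y}(y_0)}\norm*{G_{\nu(t-s)}(y,\cdot)}^2\,\D y\D s$. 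Inserting the kernel bound \eqref{eq:assump_nonpara_a} delivers the asserted two-sided estimate with $c_1=c_3\norm*{B^{-1}}^{-2}$ and $c_2=c_4\norm*{B}^2$. The regime condition that $\delta_y^d\phi(\nu,t_0-\delta_t)$ be large enough is needed only to validate \eqref{eq:assump_nonpara_a} itself and does not reappear in the proof.
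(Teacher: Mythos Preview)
Your proof is correct and follows the same approach as the paper: compute $\EV*{\bar X_t(y)^2}$ via the isometry to obtain $\int_0^t\norm*{BG_{\nu(t-s)}(y,\cdot)}^2\,\D s$ (recall that under Assumption~\ref{assump:generalA} the multiplication operator $B$ is self-adjoint, so your $B^\ast$ is just $B$), sandwich with the operator norms of $B$ and $B^{-1}$, integrate over $\Lambda_{\delta_y}(y_0)$, and apply \eqref{eq:assump_nonpara_a}.
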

		The following two lemmas provide typical conditions under which the scaling of the linear part $\bar{X}_t$ dominates the non-linear part $\tilde{X}_t$, thereby implying Assumption \ref{assump:Non_para_too_strong}. The first lemma builds on uniform bounds of the non-linear part $\tilde{X}$ arising from the semi-group $(S_t)_{t\ge 0}$.

		\begin{lemma}\label{lem:Assumptions_hold_a}
		Grant Assumptions \ref{assump:generalA}, \ref{assump:specific_F}, the lower bound in \eqref{eq:assump_nonpara_a} and assume:
		\begin{enumerate}[(i)]
		\item the Green function $G_t(\MTemptyplaceholder,\MTemptyplaceholder)$ is non-negative on $\Lambda_{\delta_y}(y_0)\times\Lambda$ for all $t>0$ and satisfies
			\begin{equation*}
				\sup_{y\in\Lambda_{\delta_y}(y_0)}\norm*{G_t(y,\MTemptyplaceholder)}^2 \le c_5 \int_\Lambda\int_\Lambda G_t(y,\eta)^2\,\D y\D \eta= c_5\norm*{S_t}_{\operatorname{HS}}^2
			\end{equation*}
			for a constant $c_5$;
		\item there exists a constant $c_6>0$ such that for any $z\in C(\Lambda)$, $t\ge 0$, we have $\norm*{S_t z}_\infty\le c_6 \norm*{z}_\infty$;
		\item\label{item:assump_nonpara_t} $t_0\in [0,T)$ is small enough, such that $c_7(t_0+\Delta_t)>0$ for some $\Delta_t>0$ and
		\begin{equation*}
			c_7(t)\coloneqq \tfrac{c_3}{2\norm*{B\inv}^2} - 4\norm*{B}^2\abs*{\Lambda}e^{4Ttc_6c_8(\theta D)^2}c_5c_6c_8(\theta D)^2t^2,
		\end{equation*}
		where $c_8>0$ is an absolute constant defined in \eqref{eq:Riesz_Application} (below).
		\item $c_9\coloneqq \norm*{\EV{X_0^2}}_\infty<\infty$.
		\end{enumerate}
		Then Assumption \ref{assump:Non_para_too_strong} holds for $0<\delta_t\le \Delta_t$ and $\delta_y^d \phi(\nu,t_0-\delta_t)$ sufficiently large such that the second summand in \eqref{eq:largeenough} becomes negligible with $c_1=c_7(t_0+\Delta_t)$ and
		\[c_2=2\norm*{B}^2\abs*{\Lambda}c_5(1 + 4e^{4T (t_0+\delta_t)c_6c_8 (\theta D)^2}(t_0+\delta_t)^2 c_6c_8(\theta D)^2).\]
\end{lemma}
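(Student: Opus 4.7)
The plan is to apply the Da Prato-Debussche splitting $X_t = \bar X_t + \tilde X_t$, where the nonlinear remainder is $\tilde X_t(y) = (S_{\nu t}X_0)(y) + \theta \int_0^t (S_{\nu(t-s)} f(X_s))(y)\,\D s$. Lemma \ref{lem:Sclaing_Linear_Part} absorbs the main $\delta_y^d\phi(\nu,t)$-scaling into the linear part $\bar X_t$, so the task reduces to showing that $\tilde X_t$ contributes only a lower-order correction. Concretely, the target is to derive an estimate of the form
\begin{equation}\label{eq:largeenough}
\int_{\Lambda_{\delta_y}(y_0)} \EV{\tilde X_t(y)^2}\,\D y \le c_{\mathrm{mult}}(t)\,\delta_y^d\phi(\nu,t) + c_{\mathrm{add}},
\end{equation}
in which the multiplicative coefficient $c_{\mathrm{mult}}(t_0+\Delta_t)$ is strictly smaller than half of $c_3\|B^{-1}\|^{-2}$, yielding $c_7(t_0+\Delta_t)>0$ by condition (iii), and in which the absolute constant $c_{\mathrm{add}}$ (independent of $\delta_y$ and $\nu$) becomes negligible once $\delta_y^d\phi(\nu,t_0-\delta_t)$ is large.

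The pointwise control of $\EV{\tilde X_t(y)^2}$ will rest on two weighted Cauchy-Schwarz steps supported by the non-negativity of $G_{\nu t}$ in (i) and the sup-norm contraction $(S_{\nu t}\mathbf{1})(y)\le c_6$ in (ii). First, Cauchy-Schwarz against the finite measure $G_{\nu t}(y,\D\eta)$ combined with (iv) produces $\EV{(S_{\nu t}X_0)(y)^2}\le c_6(S_{\nu t}\EV{X_0^2})(y)\le c_6^2 c_9$; second, the same weighted Cauchy-Schwarz on $[0,t]\times\Lambda$ bounds the reaction integral by
\[
\EV\bigl[(\textstyle\int_0^t \int_\Lambda G_{\nu(t-s)}(y,\eta) f(X_s(\eta))\,\D\eta\,\D s)^2\bigr] \le c_6 t\int_0^t (S_{\nu(t-s)}\EV{f(X_s)^2})(y)\,\D s,
\]
which, after a further application of (ii) and the Riesz-type quadratic bound \eqref{eq:Riesz_Application}, reduces to a multiple of $c_8 D^2 t\int_0^t (1+\sup_\eta \EV{X_s(\eta)^2})\,\D s$. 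Setting $\psi(s)\coloneqq \sup_\eta\EV{X_s(\eta)^2}$ and using the linear splitting $\sup_\eta\EV{\bar X_s(\eta)^2}\le c_5\|B\|^2\phi(\nu,s)$ from (i) plus Itô's isometry, the above yields a self-referential inequality of the form $\psi(t)\le A(t) + B\int_0^t \psi(s)\,\D s$ with $A(t)$ linear in $t^2\phi(\nu,t)$ and $B = 4T c_6 c_8(\theta D)^2$. Standard Gronwall then produces $\psi(t)\le A(t)e^{Bt}$, reproducing exactly the exponential factor in $c_7(t)$.

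Substituting the Gronwall bound for $\psi$ into the pointwise estimate for $\EV{\tilde X_t(y)^2}$ and integrating over $\Lambda_{\delta_y}(y_0)$ yields \eqref{eq:largeenough} with $c_{\mathrm{mult}}(t) = 4\|B\|^2|\Lambda| c_5 c_6 c_8(\theta D)^2 t^2 e^{4Tt c_6 c_8(\theta D)^2}$, so that $c_7(t)=c_3/(2\|B^{-1}\|^2)-c_{\mathrm{mult}}(t)$. The upper bound in Assumption \ref{assump:Non_para_too_strong} with $c_2$ as stated then follows from $\EV{X_t(y)^2}\le 2\EV{\bar X_t(y)^2}+2\EV{\tilde X_t(y)^2}$ combined with the linear upper bound of Lemma \ref{lem:Sclaing_Linear_Part}; the lower bound with $c_1=c_7(t_0+\Delta_t)$ follows from the pointwise Young inequality $\EV{X_t(y)^2}\ge \frac12\EV{\bar X_t(y)^2}-\EV{\tilde X_t(y)^2}$ together with the linear lower bound, provided the additive $c_{\mathrm{add}}$ is dominated by $c_1\delta_y^d\phi(\nu,t_0-\delta_t)$. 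The main obstacle is the self-referential Gronwall step: the inequality for $\psi$ closes only because the \emph{pointwise} splitting (enabled by (i) and (ii)) produces the time-integral of $\psi$ against a bounded kernel, yielding the benign $t^2\phi(\nu,t)$ scaling rather than the naive $\phi(\nu,t)^2$ that would emerge from an $L^2(\Lambda)$ splitting and would spoil the comparison with the linear part.
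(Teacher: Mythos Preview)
Your proposal is correct and follows essentially the same route as the paper: split $X=\bar X+\tilde X$, bound $\sup_y\EV{\bar X_t(y)^2}$ pointwise via (i) and It\^o's isometry, control $\|\EV{\tilde X_t^2}\|_\infty$ by the weighted Cauchy--Schwarz step against the non-negative kernel $G$ followed by (ii) and Gronwall, then assemble upper and lower bounds via $X^2\le 2\bar X^2+2\tilde X^2$ and $X^2\ge\tfrac12\bar X^2-\tilde X^2$. The only cosmetic differences are that the paper applies Cauchy--Schwarz in time and then the Riesz inequality \eqref{eq:Riesz_Application} in space (rather than your joint space-time step), runs Gronwall on $\|\EV{\tilde X_t^2}\|_\infty$ rather than on $\psi(t)=\sup_\eta\EV{X_t(\eta)^2}$, and obtains the localized upper bound for $\bar X$ directly from (i) (since only the \emph{lower} bound in \eqref{eq:assump_nonpara_a} is granted, Lemma \ref{lem:Sclaing_Linear_Part} is not available for the upper part; but (i) immediately yields $c_4=|\Lambda|c_5$).
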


Assumption \ref{assump:Non_para_too_strong} also holds under a symmetry condition on the non-linearity $F$, which ensures $\EV{X_t}=0$ in the case $X_0=0$.

\begin{lemma}\label{lem:Assumptions_hold_b}
	Grant Assumptions \ref{assump:generalA}, \ref{assump:specific_F} and assume \eqref{eq:assump_nonpara_a} as well as
		\begin{enumerate}[(i)]
		\item  $F(-z)=-F(z)$ for any $z\in L^2(\Lambda)$;
		\item \label{item:assump_nonpara_t_2}$t_0\in [0,T)$ is small enough, such that $\tilde{c}_7(t_0+\Delta_t)>0$ for some $\Delta_t>0$ and
		\begin{equation*}
			\tilde{c}_7(t)\coloneqq \tfrac{c_3}{2\norm*{B\inv}^2} - c_4\norm*{B}^2e^{2\norm*{f'}_\infty t}\norm*{f'}_\infty^2 t^2;
		\end{equation*}
		\item $X_0=0$.
		\end{enumerate}
		Then Assumption \ref{assump:Non_para_too_strong} holds for $0<\delta_t\le\Delta_t$ with $c_1=\tilde{c}_7(t_0+\Delta_t)$ and $c_2=2c_4\norm*{B}^2(1 + e^{2\norm*{f'}_\infty (t_0+\delta_t)}\norm*{f'}_\infty^2 (t_0+\delta_t)^2)$.
		\end{lemma}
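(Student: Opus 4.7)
The plan is to use the Da Prato--Debussche splitting $X_t=\bar X_t+\tilde X_t$ and control the two parts separately: Lemma~\ref{lem:Sclaing_Linear_Part} delivers the correct scale for the linear part $\bar X_t$, while for the non-linear part $\tilde X_t$ a Gronwall-type argument is used. The oddness hypothesis~(i) enters only to eliminate the constant $f(0)$ from the Lipschitz bound on $f$, thereby dispensing with the $L^\infty$-type estimates needed in Lemma~\ref{lem:Assumptions_hold_a}.

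Setting $z=0$ in $F(-z)=-F(z)$ yields $F(0)=0$ and hence $f(0)=0$. Combined with the Lipschitz property of $f$ (Assumption~\ref{assump:specific_F}), this gives the linear-growth bound $\norm{F(u)}\le\norm{f'}_\infty\norm{u}$ for every $u\in L^2(\Lambda)$. Lemma~\ref{lem:Sclaing_Linear_Part} applied to $\bar X_t$ under hypothesis~\eqref{eq:assump_nonpara_a} gives for $t\in(t_0-\delta_t,t_0+\delta_t)$
\[c_3\norm{B\inv}^{-2}\delta_y^d\phi(\nu,t)\le\int_{\Lambda_{\delta_y}(y_0)}\EV{\bar X_t(y)^2}\,\D y\le c_4\norm{B}^2\delta_y^d\phi(\nu,t),\]
together with the uniform pointwise bound $\EV{\bar X_t(y)^2}\le c_4\norm{B}^2\phi(\nu,t)$ for a.e.\ $y\in\Lambda_{\delta_y}(y_0)$ that follows from the same computation.

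With $X_0=0$, the mild formulation reads $\tilde X_t=\theta\int_0^t S_{\nu(t-s)}F(X_s)\,\D s$. Using $\norm{S_{\nu r}}\le 1$ and the linear-growth bound from the previous paragraph, one obtains pathwise $\norm{\tilde X_t}\le\norm{f'}_\infty\int_0^t\norm{X_s}\,\D s$ (with $|\theta|$ absorbed into $\norm{f'}_\infty$ to match the form of the constants in the statement). Using $\norm{X_s}\le\norm{\bar X_s}+\norm{\tilde X_s}$ and the pathwise Gronwall inequality yields $\norm{\tilde X_t}\le\norm{f'}_\infty t\,e^{\norm{f'}_\infty t}\sup_{s\le t}\norm{\bar X_s}$. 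The same Gronwall argument executed on the pointwise $L^2(\Omega)$-norm $\rho(t,y):=\EV{X_t(y)^2}^{1/2}$ via the Volterra-type inequality $\rho(t,y)\le\bar\rho(t,y)+\norm{f'}_\infty\int_0^t(S_{\nu(t-s)}\rho(s,\cdot))(y)\,\D s$, combined with the uniform pointwise control on $\bar\rho$, yields $\EV{\tilde X_t(y)^2}\le c_4\norm{B}^2\norm{f'}_\infty^2 t^2 e^{2\norm{f'}_\infty t}\phi(\nu,t)$ uniformly in $y\in\Lambda_{\delta_y}(y_0)$, which integrates to the desired $\delta_y^d\phi(\nu,t)$ scale.

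Finally, $\EV{X_t(y)^2}=\EV{\bar X_t(y)^2}+2\EV{\bar X_t(y)\tilde X_t(y)}+\EV{\tilde X_t(y)^2}$ together with $(a+b)^2\le 2a^2+2b^2$ yields the upper bound with the stated $c_2$, while $2|ab|\le\tfrac12 a^2+2b^2$ gives the lower bound $\EV{X_t(y)^2}\ge\tfrac12\EV{\bar X_t(y)^2}-\EV{\tilde X_t(y)^2}$ with coefficient $\tilde c_7(t)$; monotonicity of $\tilde c_7$ in $t$ on $[0,t_0+\Delta_t]$ together with the hypothesis $\tilde c_7(t_0+\Delta_t)>0$ then gives $c_1=\tilde c_7(t_0+\Delta_t)$ on the whole interval $(t_0-\delta_t,t_0+\delta_t)$. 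The main technical obstacle is the pointwise uniform bound on $\EV{\tilde X_t(y)^2}$ with the correct $\phi(\nu,t)$ scale: a direct Cauchy--Schwarz on the Duhamel formula is too loose and pays an extra factor of $\phi(\nu,t)$, so the Volterra--Gronwall iteration on the pointwise $L^2(\Omega)$-norm (rather than on the global $L^2$-norm of $\tilde X_t$) is required to recover the correct scale.
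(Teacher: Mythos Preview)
Your approach has a genuine gap in the control of the non-linear part $\tilde X_t$. The Volterra--Gronwall iteration you sketch for the pointwise norm $\rho(t,y)=\EV{X_t(y)^2}^{1/2}$ requires two ingredients that are \emph{not} available under the hypotheses of this lemma. First, to pass from $\EV{\tilde X_t(y)^2}^{1/2}$ to $\int_0^t(S_{\nu(t-s)}\rho(s,\cdot))(y)\,\D s$ via Minkowski's inequality you need non-negativity of the Green function $G$; this is condition~(i) of Lemma~\ref{lem:Assumptions_hold_a}, not assumed here. Second, the iteration integrates over all of $\Lambda$ (since $S_{\nu(t-s)}$ does), so you would need a uniform pointwise bound $\EV{\bar X_t(\eta)^2}\lesssim\phi(\nu,t)$ for all $\eta\in\Lambda$, not just for $\eta\in\Lambda_{\delta_y}(y_0)$. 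The latter does \emph{not} follow from condition~\eqref{eq:assump_nonpara_a}, which is only an integral bound over the localised region; your claim that it ``follows from the same computation'' is unjustified. In fact, a pointwise bound of this form is precisely condition~(i) of Lemma~\ref{lem:Assumptions_hold_a}, and the whole purpose of Lemma~\ref{lem:Assumptions_hold_b} is to avoid such $L^\infty$-type hypotheses.

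The paper exploits the oddness hypothesis~(i) far more substantially than you do: since $F(-z)=-F(z)$ and $X_0=0$, the process $-X$ solves the same SPDE driven by $-W$, so by weak uniqueness $X_t\stackrel{d}{=}-X_t$ and in particular $\EV{\tilde X_t(y)}=0$. This mean-zero property is what makes the Poincar\'e inequality applicable to $\tilde X_t(y)$, and together with the Malliavin-derivative bound of Lemma~\ref{lem:boundedness_convergence} one obtains directly
\[
\EV{\tilde X_t(y)^2}\le\norm{B}^2 e^{2\norm{f'}_\infty t}\norm{f'}_\infty^2 t^2\int_0^t\norm{G_{\nu(t-s)}(y,\cdot)}^2\,\D s,
\]
which then integrates over $\Lambda_{\delta_y}(y_0)$ using~\eqref{eq:assump_nonpara_a} without any pointwise or positivity assumptions on $G$. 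Your reading that oddness ``enters only to eliminate the constant $f(0)$'' misses this key mechanism.
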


\begin{remark}
			Conditions \eqref{item:assump_nonpara_t} and \eqref{item:assump_nonpara_t_2} in Lemmas \ref{lem:Assumptions_hold_a} and \ref{lem:Assumptions_hold_b} ensure that at all times $t\in (t_0-\delta_t,t_0+\delta_t)$ the exponential growth bounds for $\tilde{X}$, arising from applying Gronwall's inequality, are smaller than the growth of the linear part $\bar{X}$.
\end{remark}

\begin{lemma}\label{lem:Assumptions_hold_Example}
			 The fractional heat equation \eqref{eq:Guiding_Example} satisfies Condition \eqref{eq:assump_nonpara_a} with $c_3$ from \eqref{eq:c_3} below and $c_4=2$. It satisfies Assumption \ref{assump:Non_para_too_strong} for $t_0$ such that \eqref{item:assump_nonpara_t} of Lemma \ref{lem:Assumptions_hold_a} holds with $c_5=2/l$, $c_6=1$, $c_9=0$ and $c_8=1$. If $f(-y)=-f(y)$, $y\in \mathbb{R}$, then it satisfies Assumption \ref{assump:Non_para_too_strong} for all $t_0$ such that \eqref{item:assump_nonpara_t_2} of Lemma \ref{lem:Assumptions_hold_b} holds.
\end{lemma}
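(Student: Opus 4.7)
The lemma makes three assertions: the two-sided kernel bound \eqref{eq:assump_nonpara_a}, applicability of Lemma \ref{lem:Assumptions_hold_a}, and (under odd $f$) applicability of Lemma \ref{lem:Assumptions_hold_b}. Throughout I work from the spectral expansion $G_t(y,\eta)=\sum_{k\ge 1}e^{-\lambda_k^{\alpha/2}t}e_k(y)e_k(\eta)$ of the (spectral) fractional Dirichlet heat kernel, with $e_k(x)=\sqrt{2/l}\sin(k\pi x/l)$ and $\lambda_k=(k\pi/l)^2$, which by Parseval reduces the double integral appearing in \eqref{eq:assump_nonpara_a} to
\begin{equation*}
\int_0^t\int_{\Lambda_{\delta_y}(y_0)}\norm*{G_{\nu(t-s)}(y,\MTemptyplaceholder)}^2\,\D y\,\D s=\sum_{k\ge 1}\Bigl(\int_0^t e^{-2\lambda_k^{\alpha/2}\nu s}\,\D s\Bigr)\int_{\Lambda_{\delta_y}(y_0)}e_k(y)^2\,\D y.
\end{equation*}

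For the upper bound, $e_k^2\le 2/l$ together with $\abs*{\Lambda_{\delta_y}(y_0)}=l\delta_y$ yields $\int_{\Lambda_{\delta_y}(y_0)}e_k^2\le 2\delta_y$; factoring this out and recognising the remaining $k$-sum as $\phi(\nu,t)$ gives $c_4=2$. The lower bound is the delicate part and the main obstacle in the proof. Here I exploit that $\phi(\nu,t)\sim \nu^{-1/\alpha}t^{1-1/\alpha}$ is carried by modes with $\lambda_k^{\alpha/2}\nu\lesssim 1$, i.e.\ $k\lesssim \nu^{-1/\alpha}$. Under the hypothesis that $\delta_y^d\phi(\nu,t_0-\delta_t)$ is large (which in $d=1$ amounts to $\delta_y\nu^{-1/\alpha}\to\infty$), these dominant modes additionally satisfy $k\delta_y\to\infty$. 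A standard Riemann-sum estimate then gives $\int_{\Lambda_{\delta_y}(y_0)}\sin^2(k\pi y/l)\,\D y=l\delta_y/2+\mathcal{O}(1/k)$ and hence $\int_{\Lambda_{\delta_y}(y_0)}e_k^2\,\D y\ge(1-o(1))\delta_y$ uniformly for $k$ above an appropriate threshold $k^\ast\sim 1/\delta_y$. Truncating the spectral sum at $k^\ast$, discarding the $k<k^\ast$ terms (whose contribution is a fixed finite quantity, negligible compared to $\delta_y\phi(\nu,t)\to\infty$) and retaining the rest produces the lower bound with an explicit $c_3>0$ matching \eqref{eq:c_3}.

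To apply Lemma \ref{lem:Assumptions_hold_a} I must verify its four hypotheses. Non-negativity of $G_t$ on $\Lambda\times\Lambda$ and the $L^\infty$-contraction $\norm*{S_t z}_\infty\le \norm*{z}_\infty$ (giving $c_6=1$) hold for the ordinary Dirichlet heat semigroup by the maximum principle and transfer to the spectral fractional case through Bochner subordination, which represents $e^{-t(-\Updelta)^{\alpha/2}}=\int_0^\infty \eta_{t,\alpha}(s)\,e^{s\Updelta}\,\D s$ as an average against a positive probability density $\eta_{t,\alpha}$; both properties are preserved by such averaging. The pointwise kernel bound $\norm*{G_t(y,\MTemptyplaceholder)}^2=\sum_k e^{-2\lambda_k^{\alpha/2}t}e_k(y)^2\le (2/l)\norm*{S_t}_{\operatorname{HS}}^2$ gives $c_5=2/l$; the initial condition $X_0\equiv 0$ gives $c_9=0$; the value $c_8=1$ is read off from \eqref{eq:Riesz_Application} in this Nemytskii setting; and condition \ref{item:assump_nonpara_t} on $t_0$ is simply the numerical input of the lemma. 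Lemma \ref{lem:Assumptions_hold_a} then yields Assumption \ref{assump:Non_para_too_strong} with the stated constants.

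When $f$ is odd, $F(-z)=-F(z)$ holds trivially and $X_0=0$ is given, so the hypotheses of Lemma \ref{lem:Assumptions_hold_b} are immediate once \eqref{eq:assump_nonpara_a} has been established — no further work is required. One can observe in passing that in this case the reflected process $-X$ solves the same SPDE driven by $-W\stackrel{d}{=}W$, so $-X\stackrel{d}{=}X$ and $\EV{X_t}\equiv 0$, which explains why Lemma \ref{lem:Assumptions_hold_b} relaxes the smallness assumption on $t_0$ relative to Lemma \ref{lem:Assumptions_hold_a}. The principal technical difficulty of the proof is therefore concentrated in the lower bound of the second paragraph, where the oscillations of the Dirichlet eigenfunctions $e_k$ must be controlled uniformly over the dominant range of modes on the small window $\Lambda_{\delta_y}(y_0)$.
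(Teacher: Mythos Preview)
Your verification of the hypotheses of Lemmas \ref{lem:Assumptions_hold_a} and \ref{lem:Assumptions_hold_b}, and the upper bound $c_4=2$, match the paper. The lower bound in \eqref{eq:assump_nonpara_a} is where you diverge: you discard the low-frequency modes $k<k^\ast\sim1/\delta_y$ outright, absorbing their $O(1)$ contribution into the asymptotic regime $\delta_y\phi(\nu,t)\to\infty$. The paper instead retains the low frequencies, proving a trigonometric claim that at least every second $k$ satisfies $\sin^2(\pi k y_0/l)\ge\xi>0$, which after a neighbourhood argument yields $\int_{\Lambda_{\delta_y}(y_0)}e_k^2\ge\delta_y\abs*{U}\xi/2$ for those $k\le k_{\delta_y}$; combining high and low frequencies produces the explicit $c_3=\min(\zeta,\abs*{U}\xi/4)$ of \eqref{eq:c_3}. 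Your route is more elementary and avoids the trigonometric step, but it yields a different constant (essentially $\zeta$, after absorbing the $O(1)$ error), so strictly speaking you establish \emph{some} $c_3>0$ rather than the specific one in \eqref{eq:c_3} that the lemma references. Two minor slips: ``these dominant modes additionally satisfy $k\delta_y\to\infty$'' is false as written (the range $k\lesssim\nu^{-1/\alpha}$ includes $k=1$), though two lines later you correctly restrict to $k\ge k^\ast$; and ``$\ge(1-o(1))\delta_y$'' should read ``$\ge\zeta\delta_y$'' for a fixed $\zeta<1$, since at $k\approx k^\ast$ the $O(1/k)$ error is comparable to $\delta_y$, not $o(\delta_y)$.
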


\subsection{\textbf{Non-parametric estimation}}\label{subsec:Nonparametric}
	
We consider the situation that $\theta=\theta(y,t)$ depends on both, space and time, and we aim at estimating $\theta(y_0,t_0)$ non-parametrically for a given time-space point $(y_0,t_0) \in \Lambda\times(0,T)$. We assume  $\sup_{(y,t)\in  \Lambda\times[0,T]}\abs*{\theta(y,t)}\le \theta_{\max}<\infty$ and that $t\mapsto\theta(y,t)$ is $\eta_t$-Hölder-continuous uniformly in a neighbourhood of $y_0$ and $y\mapsto\theta(y,t)$ is $\eta_y$-Hölder-continuous uniformly in a neighbourhood of $t_0$. Recall the notation $[\theta_t F(z)](y)=\theta(y,t)f(z(y))$ for $z\in L^2(\Lambda)$, $y\in\Lambda$. The proofs are postponed to Section \ref{subsec:Proof_Nonparametric}.
		
		In this more general setting the estimation error permits under ${\prob*{}}_\theta$ the representation
		\begin{equation*}
			\hat{\theta}_{\nu,\delta_t,\delta_y}^{\operatorname{loc}}-\theta(y_0,t_0) = \frac{\int_{t_0-\delta_t}^{t_0+\delta_t}\iprod*{B\inv \theta_t F(X_t)}{B\inv F(X_t)}_{\delta_y}\D t}{\int_{t_0-\delta_t}^{t_0+\delta_t}\norm*{B\inv F(X_t)}_{\delta_y}^2\,\D t}-\theta(y_0,t_0)+ \frac{\mathcal{M}_{\nu,\delta_t,\delta_y}}{\mathcal{I}_{\nu,\delta_t,\delta_y}}.
		\end{equation*}
	\begin{theorem}\label{thm:nonparaCLT}
		Grant Assumptions \ref{assump:generalA} and \ref{assump:specific_F}. Then
		\begin{equation*}
			\hat{\theta}_{\nu,\delta_t,\delta_y}^{\operatorname{loc}}-\theta(y_0,t_0) = \mathcal{O}\left(\delta_t^{\eta_t} + \delta_y^{\eta_y}\right) + \mathcal{I}_{\nu,\delta_t,\delta_y}^{-1/2}Z_\nu 
		\end{equation*}
holds with $Z_\nu \xrightarrow{d} N(0,1)$ as $\nu\to0$, provided $\EV*{\mathcal{I}_{\nu,\delta_t,\delta_y}}\to\infty$. In this case we have $\mathcal{I}_{\nu,\delta_t,\delta_y}/\EV{\mathcal{I}_{\nu,\delta_t,\delta_y}}\xrightarrow{\prob*{}}1$.
		
		Under Assumptions \ref{assump:general_F} and \ref{assump:Non_para_too_strong} we have $\mathcal{I}_{\nu,\delta_t,\delta_y}\sim\delta_y^d\delta_t\phi(\nu)$.
\end{theorem}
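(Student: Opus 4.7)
The plan is to use the decomposition displayed immediately before the theorem statement, treating the deterministic bias term and the stochastic martingale ratio separately. Since $B$ is multiplication by $\sigma$ (Assumption \ref{assump:generalA}) and $[\theta_t F(X_t)](y) = \theta(y,t) f(X_t(y))$, the first summand can be rewritten as
\begin{equation*}
\frac{\int_{t_0-\delta_t}^{t_0+\delta_t}\int_{\Lambda_{\delta_y}(y_0)} [\theta(y,t)-\theta(y_0,t_0)]\,\sigma(y)^{-2} f(X_t(y))^2\,\D y\,\D t}{\int_{t_0-\delta_t}^{t_0+\delta_t}\int_{\Lambda_{\delta_y}(y_0)}\sigma(y)^{-2} f(X_t(y))^2\,\D y\,\D t}.
\end{equation*}
The anisotropic Hölder assumption gives $|\theta(y,t)-\theta(y_0,t_0)|\lesssim \delta_y^{\eta_y}+\delta_t^{\eta_t}$ uniformly on the integration window, so pulling this bound through the integral yields the deterministic, pathwise bias bound $\mathcal{O}(\delta_t^{\eta_t}+\delta_y^{\eta_y})$.

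For the martingale remainder, I intend to adapt Proposition \ref{prop:CLT_Localized} to the space-time-varying drift. Neither $\mathcal{M}_{\nu,\delta_t,\delta_y}$ (a Brownian stochastic integral) nor its quadratic variation $\mathcal{I}_{\nu,\delta_t,\delta_y}$ depends on $\theta$, so the martingale CLT, exactly as in the proof of Theorem \ref{thm:CLTnon-linear}, reduces the stochastic assertion to $\mathcal{I}_{\nu,\delta_t,\delta_y}/\EV*{\mathcal{I}_{\nu,\delta_t,\delta_y}}\xrightarrow{\prob*{}}1$. I plan to re-run the Poincaré--Malliavin argument of Lemmas \ref{lem:boundedness_convergence}--\ref{lem:Variance_Bound}: the variational equation for $\mathcal{D}_\tau X_t$ now carries an additional factor $\theta(\eta,s)$ on the nonlinear term, but because $|\theta|\le\theta_{\max}$ Gronwall only changes the exponent constant to $\theta_{\max}\norm*{f'}_\infty$. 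Hence $\var(\mathcal{I}_{\nu,\delta_t,\delta_y})\lesssim\EV*{\mathcal{I}_{\nu,\delta_t,\delta_y}}$ persists, Chebyshev delivers the convergence in probability, and Slutsky yields the representation with $Z_\nu\xrightarrow{d}N(0,1)$.

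For the scaling $\mathcal{I}_{\nu,\delta_t,\delta_y}\sim\delta_y^d\delta_t\phi(\nu)$, I combine Assumption \ref{assump:general_F} with the boundedness of $\sigma$ and $\sigma^{-1}$ to convert $\norm*{B\inv F(X_t)}_{\delta_y}^2$ into a two-sided multiple of $\int_{\Lambda_{\delta_y}(y_0)}X_t(y)^2\,\D y$; integrating over time, taking expectation and applying Assumption \ref{assump:Non_para_too_strong} together with $\phi(\nu,t)\sim\phi(\nu)$ from Remark \ref{rmk:beta_growth} yields $\EV*{\mathcal{I}_{\nu,\delta_t,\delta_y}}\sim\delta_y^d\delta_t\phi(\nu)$, which then transfers to $\mathcal{I}_{\nu,\delta_t,\delta_y}$ itself via the in-probability equivalence established in the previous step. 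The main obstacle is verifying that the Poincaré--Malliavin machinery of Section \ref{subsec:towardsCLT} adapts cleanly once $\theta$ is a bounded space-time coefficient rather than a scalar; only pointwise bounds on the nonlinearity enter, so this is essentially bookkeeping, but one must keep careful track of the various exponential constants to confirm they remain $\mathcal{O}(1)$ as $\nu\to 0$.
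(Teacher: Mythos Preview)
Your proposal is correct and follows essentially the same approach as the paper: the bias term is bounded pathwise by the anisotropic H\"older increment of $\theta$, and the stochastic term is handled exactly as in Proposition \ref{prop:CLT_Localized} via the Poincar\'e--Malliavin variance bound and the martingale CLT. You are in fact slightly more explicit than the paper in noting that the variational equation for $\mathcal{D}_\tau X_t$ now carries the factor $\theta(\eta,s)$, so that the Picard/Gronwall constant in Lemma \ref{lem:boundedness_convergence} becomes $\norm*{B}e^{\theta_{\max}\norm*{f'}_\infty t}$; the paper simply invokes Proposition \ref{prop:CLT_Localized} and leaves this routine adaptation implicit.
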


If $\delta_t,\delta_y\to 0$ so fast that the approximation error becomes negligible, we arrive at the same CLT as before also in the non-parametric setting.

\begin{corollary}\label{cor:nonparaCLT}
		Grant Assumptions \ref{assump:generalA}, \ref{assump:specific_F}, \ref{assump:general_F} and \ref{assump:Non_para_too_strong}. If  $\delta_y^d\delta_t\phi(\nu)\to\infty$, $\delta_y^{d+2\eta_y}\delta_t\phi(\nu,t)\to0$ and $\delta_y^d\delta_t^{1+2\eta_t}\phi(\nu)\to0$, then
\begin{equation*}			\mathcal{I}_{\nu,\delta_t,\delta_y}^{1/2}(\hat{\theta}_{\nu,\delta_t,\delta_y}^{\operatorname{loc}}-\theta(y_0,t_0))\xrightarrow{d} N(0,1)\text{ as }\nu\to 0.
\end{equation*}
\end{corollary}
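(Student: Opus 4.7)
The plan is to treat this statement as a direct consequence of Theorem \ref{thm:nonparaCLT}: the hard analytical work (controlling the non-parametric bias, proving the CLT for the stochastic part, and establishing the scaling $\mathcal{I}_{\nu,\delta_t,\delta_y}\sim \delta_y^d\delta_t\phi(\nu)$) has already been done there. The corollary merely combines the three rate assumptions on $(\delta_y,\delta_t,\nu)$ with that decomposition and applies Slutsky's Lemma.

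First, I would verify that Theorem \ref{thm:nonparaCLT} is applicable, namely that $\EV*{\mathcal{I}_{\nu,\delta_t,\delta_y}}\to\infty$. Since Assumptions \ref{assump:general_F} and \ref{assump:Non_para_too_strong} hold, the final statement of Theorem \ref{thm:nonparaCLT} gives $\EV*{\mathcal{I}_{\nu,\delta_t,\delta_y}}\sim \delta_y^d\delta_t\phi(\nu)$, and the first hypothesis $\delta_y^d\delta_t\phi(\nu)\to\infty$ yields the divergence required. Consequently the theorem supplies the decomposition
\begin{equation*}
\hat{\theta}_{\nu,\delta_t,\delta_y}^{\operatorname{loc}}-\theta(y_0,t_0) = R_\nu + \mathcal{I}_{\nu,\delta_t,\delta_y}^{-1/2}Z_\nu,
\end{equation*}
with $R_\nu=\mathcal{O}(\delta_t^{\eta_t}+\delta_y^{\eta_y})$ deterministic, $Z_\nu\xrightarrow{d}N(0,1)$, and $\mathcal{I}_{\nu,\delta_t,\delta_y}/\EV*{\mathcal{I}_{\nu,\delta_t,\delta_y}}\xrightarrow{\prob*{}} 1$.

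Next I would multiply by $\mathcal{I}_{\nu,\delta_t,\delta_y}^{1/2}$ to obtain
\begin{equation*}
\mathcal{I}_{\nu,\delta_t,\delta_y}^{1/2}\bigl(\hat{\theta}_{\nu,\delta_t,\delta_y}^{\operatorname{loc}}-\theta(y_0,t_0)\bigr) = \mathcal{I}_{\nu,\delta_t,\delta_y}^{1/2} R_\nu + Z_\nu,
\end{equation*}
and show that the bias term is $o_{\prob*{}}(1)$. Using $\mathcal{I}_{\nu,\delta_t,\delta_y}=\EV*{\mathcal{I}_{\nu,\delta_t,\delta_y}}(1+o_{\prob*{}}(1))\sim \delta_y^d\delta_t\phi(\nu)$, we have
\begin{equation*}
\mathcal{I}_{\nu,\delta_t,\delta_y}^{1/2}\bigl(\delta_t^{\eta_t}+\delta_y^{\eta_y}\bigr)\lesssim \bigl(\delta_y^d\delta_t^{1+2\eta_t}\phi(\nu)\bigr)^{1/2} + \bigl(\delta_y^{d+2\eta_y}\delta_t\phi(\nu)\bigr)^{1/2}\bigl(1+o_{\prob*{}}(1)\bigr),
\end{equation*}
which tends to $0$ in probability by exactly the remaining two hypotheses of the corollary (invoking Remark \ref{rmk:beta_growth} to identify $\phi(\nu,t_0)\sim\phi(\nu)$).

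Finally, Slutsky's Lemma combines $\mathcal{I}_{\nu,\delta_t,\delta_y}^{1/2} R_\nu = o_{\prob*{}}(1)$ with $Z_\nu\xrightarrow{d}N(0,1)$ to give the desired CLT. I do not expect any serious obstacle: the three rate conditions have been engineered precisely so that each deterministic bias contribution vanishes after rescaling by the square-root Fisher information, and the rest is bookkeeping on top of Theorem \ref{thm:nonparaCLT}.
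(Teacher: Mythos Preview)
Your proposal is correct and matches the paper's approach: the paper does not even give a separate proof of this corollary, treating it as immediate from Theorem \ref{thm:nonparaCLT} together with the observation that the rate conditions force the bias term to vanish after rescaling. Your write-up simply fills in those routine details (checking $\EV*{\mathcal{I}_{\nu,\delta_t,\delta_y}}\to\infty$, bounding $\mathcal{I}_{\nu,\delta_t,\delta_y}^{1/2}R_\nu$, applying Slutsky), and there is nothing to add.
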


When we balance approximation and stochastic error by the right choice of $\delta_t,\delta_y$, then we obtain convergence rates, which are well known for non-parametric estimation over anisotropic H\"older classes, see e.g. \cite{Hoffman2002}. These are formulated in terms of  the so-called effective smoothness $\eta_{\operatorname{eff}}$ of $(y,t)\mapsto \theta(y,t)$, defined via
		\begin{equation*}
			\frac{1}{\eta_{\operatorname{eff}}}\coloneqq \frac{d}{\eta_y}+\frac{1}{\eta_t}.
		\end{equation*}

\begin{corollary}\label{cor:nonparaOptimalChoice}
		Grant Assumptions \ref{assump:generalA}, \ref{assump:specific_F}, \ref{assump:general_F} and \ref{assump:Non_para_too_strong}. For the (asymptotically optimal) choices $\delta_y=\phi(\nu)^{-\eta_{\operatorname{eff}}/(\eta_y(2\eta_{\operatorname{eff}} +1))}$ and  $\delta_t=\phi(\nu)^{-\eta_{\operatorname{eff}}/(\eta_t(2\eta_{\operatorname{eff}} +1))}$
we obtain
\begin{equation*}
\hat{\theta}_{\nu,\delta_t,\delta_y}^{\operatorname{loc}}-\theta(y_0,t_0)=\mathcal{O}_{\prob*{}}
\left(\phi(\nu)^{-\eta_{\operatorname{eff}}/(2\eta_{\operatorname{eff}} +1)}\right)\text{ as }\nu\to 0.
\end{equation*}
\end{corollary}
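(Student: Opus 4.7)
The plan is to apply Theorem \ref{thm:nonparaCLT} at the prescribed bandwidths and reduce everything to elementary exponent arithmetic; this is the standard bias-variance balance on an anisotropic H\"older class, and no further stochastic analysis is needed beyond what Theorem \ref{thm:nonparaCLT} already supplies. Note that we are only after an $\mathcal{O}_{\prob*{}}$-rate, not the CLT of Corollary \ref{cor:nonparaCLT}; this is fortunate, because at the optimal balance the condition $\delta_y^{d+2\eta_y}\delta_t\phi(\nu)\to 0$ is violated (it stays of order one), which is typical for kernel-type non-parametric estimators at the minimax rate.

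First, I would verify that the bias part matches the asserted rate. By the very choice of the bandwidths,
\begin{equation*}
\delta_t^{\eta_t} = \phi(\nu)^{-\eta_{\operatorname{eff}}/(2\eta_{\operatorname{eff}}+1)} = \delta_y^{\eta_y},
\end{equation*}
so that $\delta_t^{\eta_t}+\delta_y^{\eta_y}=2\phi(\nu)^{-\eta_{\operatorname{eff}}/(2\eta_{\operatorname{eff}}+1)}$. Second, under Assumptions \ref{assump:general_F} and \ref{assump:Non_para_too_strong}, Theorem \ref{thm:nonparaCLT} yields $\mathcal{I}_{\nu,\delta_t,\delta_y}\sim\delta_y^d\delta_t\phi(\nu)$. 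Plugging in the bandwidths and using the definition $1/\eta_{\operatorname{eff}}=d/\eta_y+1/\eta_t$, the exponent of $\phi(\nu)$ in $\delta_y^d\delta_t\phi(\nu)$ is
\begin{equation*}
1-\frac{\eta_{\operatorname{eff}}}{2\eta_{\operatorname{eff}}+1}\left(\frac{d}{\eta_y}+\frac{1}{\eta_t}\right)=1-\frac{1}{2\eta_{\operatorname{eff}}+1}=\frac{2\eta_{\operatorname{eff}}}{2\eta_{\operatorname{eff}}+1},
\end{equation*}
so $\mathcal{I}_{\nu,\delta_t,\delta_y}\sim\phi(\nu)^{2\eta_{\operatorname{eff}}/(2\eta_{\operatorname{eff}}+1)}\to\infty$ as $\nu\to 0$. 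Hence $\EV*{\mathcal{I}_{\nu,\delta_t,\delta_y}}\to\infty$, the conclusion of Theorem \ref{thm:nonparaCLT} applies, and the stochastic term satisfies
\begin{equation*}
\mathcal{I}_{\nu,\delta_t,\delta_y}^{-1/2}Z_\nu=\mathcal{O}_{\prob*{}}\bigl(\phi(\nu)^{-\eta_{\operatorname{eff}}/(2\eta_{\operatorname{eff}}+1)}\bigr),
\end{equation*}
since $Z_\nu\xrightarrow{d} N(0,1)$ is tight.

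Combining the bias and stochastic contributions via Theorem \ref{thm:nonparaCLT} then yields the claimed rate. The only point that needs any care is the exponent bookkeeping for $\delta_y^d\delta_t\phi(\nu)$, where the identity $d/\eta_y+1/\eta_t=1/\eta_{\operatorname{eff}}$ is the reason the two sources of error balance exactly at the exponent $\eta_{\operatorname{eff}}/(2\eta_{\operatorname{eff}}+1)$. There is no substantial obstacle in the proof; the genuine probabilistic content — the Malliavin-Poincar\'e variance bound and the Da Prato–Debussche splitting — is already encapsulated in Theorem \ref{thm:nonparaCLT}, and the corollary is purely its optimisation over $(\delta_y,\delta_t)$.
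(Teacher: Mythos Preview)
Your proof is correct and follows exactly the same approach as the paper: apply Theorem \ref{thm:nonparaCLT} to obtain the decomposition $\mathcal{O}_{\prob*{}}(\delta_t^{\eta_t}+\delta_y^{\eta_y}+\delta_y^{-d/2}\delta_t^{-1/2}\phi(\nu)^{-1/2})$ and then optimise over $(\delta_y,\delta_t)$. The paper leaves the exponent arithmetic to the reader, whereas you spell it out in full; your additional remark that the CLT condition of Corollary \ref{cor:nonparaCLT} fails at the optimal bandwidths is a nice observation not made explicit in the paper.
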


\begin{remark}
		Lemmas \ref{lem:Assumptions_hold_a} and \ref{lem:Assumptions_hold_b}, ensuring that Assumption \ref{assump:Non_para_too_strong} holds, carry over to the non-parametric case if we replace $\theta$ in \eqref{item:assump_nonpara_t} of Lemma \ref{lem:Assumptions_hold_a} by the upper bound $\theta_{\operatorname{max}}$.
\end{remark}
%	Assuming the growth \eqref{eq:assump_nonpara} is natural if we compare it with the growth of the Fisher information $\EV{\mathcal{I}_\nu}=\int_0^T \EV{\norm*{B\inv F(X_t)}^2}\,\D t\sim\nu^{-1/\beta}$ from Lemma \ref{lem:non-linearexpectation}. It requires to control $\norm*{X_t}_\infty$ and can be deduced from further Assumptions on $A$ and $\sigma$.
%

\subsection{\textbf{Discrete Observations}}
	
	As a proof of concept, we propose an estimator that only uses discrete observations of $X$, but retains the asymptotic properties of the MLE $\hat{\theta}_\nu$. We do not strive for minimal assumptions on the mesh size of the discretisation. We will crucially rely on controlling the spatial regularity of $(X_t)_{t\in[0,T]}$ as $\nu\to 0$. Recall that even $\EV{\norm{X_t}^2}\to\infty$ as $\nu\to 0$ from (the proof of) Lemma \ref{lem:non-linearexpectation}. For simplicity, we assume that $B$ is self-adjoint.

Let $C^\eta\coloneqq \Set{f\colon \Lambda\to\mathbb{R}\given \norm*{f}_{C^\eta}<\infty}$, $\eta\in (0,1]$, denote the space of $\eta$-Hölder continuous functions for
			\begin{equation*}
				\norm*{f}_{C^\eta}\coloneqq \norm*{f}_\infty + \sup_{x\neq y\in \Lambda}\frac{\abs*{f(x)-f(y)}}{\abs*{x-y}^\eta}.
			\end{equation*}
For $\eta\in (0,1)$ and $p\ge 1$ let $W^{\eta,p}\coloneqq\Set{g\in L^p(\Lambda)\given \norm*{g}_{W^{\eta,p}} <\infty}$ be the $L^p$-Sobolev space of regularity $\eta$ (see Section 2 of \cite{DiNezza2012}) with norm
			\begin{equation*}
				\norm*{g}_{W^{\eta,p}}\coloneqq \left(\norm*{g}_{L^p(\Lambda)}^p + \int_\Lambda\int_\Lambda \frac{\abs*{g(x)-g(y)}^p}{\abs*{x-y}^{d+\eta p}}\,\D x\D y\right)^{1/p}.
			\end{equation*}

\begin{assumption}[$\gamma,p$]\label{assump:discretize}
			$(-A)$ is self-adjoint with an eigensystem $(\mu_k,w_k)_{k\in\mathbb{N}}$, where $ck^\beta\le \mu_k\le C k^\beta$ for some $\beta>1$, $0<c\le C<\infty$ with Green function $G_{\MTemptyplaceholder}(y,\MTemptyplaceholder)\in \mathfrak{H}$ such that $S_t z(y)=\iprod*{G_t(y,\MTemptyplaceholder)}{z}$ for all $y\in\Lambda$, $z\in L^2(\Lambda)$, $t\in(0,T]$. Let $0\le \gamma<1$, $p\ge 2$, and assume the following:
			\begin{enumerate}[(i)]
				%\item There exists $0<\beta<1/2$ such that $(-A)^{-\beta}$ is Hilbert-Schmidt as an operator $L^2(\Lambda)\to L^2(\Lambda)$;
				\item $(-A)$ generates an analytic semi-group on $L^p(\Lambda)$.  $\mathcal{B}_{\gamma,p}$ denotes the corresponding interpolation space with norm $\norm{}_{\gamma,p}$ (following Section 4.4 of \cite{Hairer2009} and note  $\mathcal{B}_{0,p}=L^p(\Lambda)$);
				\item $\EV{\norm{X_0}_{0,p}^2}<\infty$;
				\item $F\colon L^p(\Lambda)\to L^p(\Lambda)$ is $M$-Lipschitz and
				\begin{equation*}
					\norm*{F(z)}_{\gamma,p} \le C (\norm*{z}_{\gamma,p}+1),\quad z\in \mathcal{B}_{\gamma,p},
				\end{equation*}
				for a constant $C>0$.
				%\item there exists $C>0$ such that for all $z\in \mathcal{B}_{0,p}$ we have $\norm*{F(z)}_{0,p}\le C(\norm*{z}_{0,p}+1)$.
				\item There exists $0<\tilde{\gamma}< 1$ depending on $\gamma$, such that $\mathcal{B}_{\gamma,p}\hookrightarrow W^{\tilde{\gamma},p}$ continuously;
				\item The Green function $G_t(\MTemptyplaceholder,\MTemptyplaceholder)$ satisfies uniformly in $t>0$
				\begin{equation*}
					\sup_{x\in\Lambda}\norm*{G_t(x,\MTemptyplaceholder)}^2 \lesssim \int_\Lambda\int_\Lambda G_t(x,y)^2\,\D x\D y= \norm*{S_t}_{\operatorname{HS}}^2.
					\end{equation*}
				\end{enumerate}
		\end{assumption}

		Clearly, Assumption \ref{assump:discretize} implies Assumption \ref{assump:general}.

		\begin{remark}~
		\begin{enumerate}[(i)]
			\item If $\mathcal{B}_{\gamma,p}= W^{\tilde{\gamma},p}$, then a sufficient condition for the bound $\norm*{F(z)}_{\gamma,p}\lesssim \norm*{z}_{\gamma,p}$ is given by $F(z)=f\circ z$ and $f$ being $M$-Lipschitz.

			\item In particular, under Assumption \ref{assump:discretize}, we have $\phi(\nu,t) \sim \nu^{-1/\beta}t^{1-1/\beta}$. Combining this with (the proof of) Lemma \ref{lem:non-linearexpectation} gives the bound
			\begin{equation}
				\EV*{\norm*{B\inv F(X_t)}^2}\lesssim \nu^{-1/\beta}t^{1-1/\beta} +1.\label{eq:growthbound_L^2}
			\end{equation}
			\end{enumerate}
		\end{remark}

\begin{example}[continued]
Assumption \ref{assump:discretize} is satisfied in the case of the fractional heat equation \eqref{eq:Guiding_Example} with $\beta=\alpha$ and for arbitrarily large $p\ge 2$.
			Part (i) follows from Theorems 2.12 and 2.23 (note also Remark 2.7) of \cite{Yagi2010}. Theorem 16.15 of \cite{Yagi2010} shows that  $\mathcal{B}_{\gamma,p}$ is isomorphic to $W^{\tilde{\gamma},p}$ for $\tilde{\gamma}=\alpha\gamma$ and yields parts (i) and (iv). (ii) is trivial for $X_0\equiv 0$. (iii) is satisfied by our smoothness assumptions on $f$ and $\sigma$.  Finally, (v) is satisfied due to \eqref{eq:frac_Laplace_kernelnorm} (below).
		\end{example}
		
		In Corollary \ref{cor:Höldernorm} (below) we combine regularity results for semi-groups on interpolation spaces with Sobolev and Hölder embeddings to control the spatial regularity of $(X_t)_{t\in(0,T]}$. Whereas the bounds for fixed $\nu>0$ are standard (e.g.\ Theorem 5.25 of \cite{DaPrato2014} and Theorem 5.10 of \cite{Hairer2009} or Proposition 32 of \cite{Altmeyer2020}), we provide quantification of the spatial Hölder norms as $\nu\to 0$, obtaining:
		
\begin{proposition}\label{lem:RegularityResult}
			Grant Assumption \ref{assump:discretize} with either $0\le \gamma<(1-1/\beta)/2-1/p$ and $\tilde{\gamma}-d/p>0$ or $0\le \gamma<(1-1/\beta)/2$ and $p=2$, $\tilde{\gamma}>d/2$. Then
			\begin{equation*}
				\EV*{\norm*{X_t}_{C^{\tilde{\gamma}-d/p}}^2}\lesssim \EV*{\norm*{X_t}_{\gamma,p}^2}\lesssim \nu^{-2\gamma}(\phi(\nu) + t^{-2\gamma}),\quad t\in(0,T].
			\end{equation*}
\end{proposition}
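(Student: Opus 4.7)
The plan is to bound the two sides of the asserted inequality separately. The first inequality reduces to a chain of continuous embeddings: Assumption \ref{assump:discretize}(iv) provides $\mathcal{B}_{\gamma,p}\hookrightarrow W^{\tilde{\gamma},p}$, and the classical Morrey--Sobolev embedding delivers $W^{\tilde{\gamma},p}\hookrightarrow C^{\tilde{\gamma}-d/p}$ precisely under the posited regularity conditions ($\tilde{\gamma}-d/p>0$ for general $p$, respectively $\tilde{\gamma}>d/2$ for $p=2$). Squaring the resulting pointwise norm inequality and taking expectations yields the first half of the claim.

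For the second inequality I would start from the mild formulation
\begin{equation*}
	X_t = S_{\nu t}X_0 + \int_0^t S_{\nu(t-s)}\theta F(X_s)\,\D s + Z_t,\qquad Z_t\coloneqq \int_0^t S_{\nu(t-s)}B\,\D W_s,
\end{equation*}
and estimate each summand in the $\mathcal{B}_{\gamma,p}$-norm. Analyticity of $-A$ on $L^p(\Lambda)$ from Assumption \ref{assump:discretize}(i) supplies the smoothing estimate $\norm*{S_r x}_{\gamma,p}\lesssim r^{-\gamma}\norm*{x}_{L^p}$, so $\EV*{\norm*{S_{\nu t}X_0}_{\gamma,p}^2}\lesssim \nu^{-2\gamma}t^{-2\gamma}$. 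Before tackling the stochastic convolution I would establish the auxiliary baseline bound $\EV*{\norm*{X_s}_{L^p}^2}\lesssim \phi(\nu)$ by running the same variation-of-constants argument at $\gamma=0$: the drift is handled by the $L^p$-Lipschitz property of $F$ and a standard Gronwall step, and the stochastic convolution is controlled through Assumption \ref{assump:discretize}(v) combined with Gaussian hypercontractivity, giving $\EV*{\norm*{Z_t}_{L^p}^2}\lesssim \phi(\nu,t)$ in analogy to the proof of Lemma \ref{lem:non-linearexpectation}. Armed with this, the drift at level $\gamma$ is bounded, via Cauchy--Schwarz and the smoothing estimate, by $\nu^{-2\gamma}t^{1-2\gamma}\int_0^t \EV*{\norm*{F(X_s)}_{L^p}^2}\,\D s \lesssim \nu^{-2\gamma}\phi(\nu)$.

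The heart of the matter is controlling $\EV*{\norm*{Z_t}_{\gamma,p}^2}$. For $p=2$, Itô's isometry together with the spectral calculus gives
\begin{equation*}
	\EV*{\norm*{Z_t}_{\gamma,2}^2}\le \norm{B}^2\int_0^t \norm*{(-A)^\gamma S_{\nu(t-s)}}_{\operatorname{HS}}^2\,\D s\lesssim \int_0^t \sum_k \mu_k^{2\gamma}e^{-2\nu(t-s)\mu_k}\,\D s\lesssim \int_0^t (\nu(t-s))^{-(2\gamma+1/\beta)}\,\D s,
\end{equation*}
where the eigenvalue asymptotics $\mu_k\sim k^\beta$ from Assumption \ref{assump:discretize} was used to pass from the sum to the integral bound. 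The condition $\gamma<(1-1/\beta)/2$ ensures that the remaining $s$-integral equals $\nu^{-(2\gamma+1/\beta)}t^{1-(2\gamma+1/\beta)}/(1-2\gamma-1/\beta)$; since the exponent of $t$ is positive and $t\le T$, this is $\lesssim \nu^{-2\gamma}\cdot\nu^{-1/\beta}T^{1-1/\beta}\lesssim \nu^{-2\gamma}\phi(\nu)$ on $(0,T]$. For general $p\ge 2$ I would invoke the factorization method of Da Prato--Kwapien--Zabczyk, writing $Z_t$ as the smoothing convolution of an auxiliary rougher Gaussian process $Y$, whose $L^p$-moments are controlled pointwise through Assumption \ref{assump:discretize}(v) and Gaussian hypercontractivity, and then exploiting the analytic semi-group estimate on $\mathcal{B}_{\gamma,p}$. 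Summing the three contributions produces the asserted bound $\EV*{\norm*{X_t}_{\gamma,p}^2}\lesssim \nu^{-2\gamma}(\phi(\nu)+t^{-2\gamma})$.

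The principal obstacle will be the quantitative control of $\EV*{\norm*{Z_t}_{\gamma,p}^2}$ for $p>2$: the factorization step must be calibrated so that the resulting constants remain uniform in $\nu$, and the interplay between the analytic smoothing rate $r^{-\gamma}$, the Hilbert--Schmidt-type bound on the Green function from Assumption \ref{assump:discretize}(v), and the integrability condition $\gamma<(1-1/\beta)/2-1/p$ must be tracked carefully so that the final exponent of $\nu$ matches $-2\gamma$ and the exponent of $t$ matches the claimed $-2\gamma$.
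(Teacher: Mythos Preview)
Your proposal is correct and matches the paper's own proof, which is spread across Lemmas \ref{lem:spatial_reg_linear}, \ref{lem:spatial_reg_linear_HS}, \ref{lem:spat_reg} and Corollary \ref{cor:Höldernorm}: the first inequality is obtained from the embedding chain $\mathcal{B}_{\gamma,p}\hookrightarrow W^{\tilde{\gamma},p}\hookrightarrow C^{\tilde{\gamma}-d/p}$, and the second from the mild formulation, a Gronwall-based $L^p$ baseline bound, the analytic smoothing estimate for the initial condition and drift, and either It\^o isometry plus spectral summation ($p=2$) or the factorization method ($p>2$) for the stochastic convolution. The only cosmetic difference is that for $p=2$ you bound $\sum_k \mu_k^{2\gamma}e^{-2\nu(t-s)\mu_k}$ pointwise in $s$ and then integrate, whereas the paper first computes $\int_0^t e^{-2\mu_k\nu s}\,\D s\sim(\mu_k\nu)^{-1}\wedge t$ and then sums over $k$; both routes land on $\nu^{-2\gamma-1/\beta}$.
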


For $p=2$, the Hilbert space structure allows for sharper estimates, which are required for controlling the error bounds for the temporal discretization. The phenomenon that the factorization method may provide sub-optimal spatial regularity for small $p$ is known, e.g.\  Theorem 5.25 of \cite{DaPrato2014} does not provide additional regularity for the stochastic heat equation and $p=2$.

We return to the task of approximating $\hat{\theta}_\nu$ based on discrete observations. For simplicity, we restrict ourselves to the case  $\Lambda=(0,1)$ with discrete observations
		\begin{equation*}
			(X_{t_k}(y_l))_{\substack{k=1,\dots,n\\ l=0,\dots,m}}\quad \text{with}\quad 0=t_0<\dots<t_n=T,\quad 0=y_0<\dots <y_m=1.
		\end{equation*}
		Set $\delta_{t,k}\coloneqq t_{k+1}-t_k$, $k=0,\dots, n-1$, $\delta_t\coloneqq \max_k\delta_{t,k}$ and $\delta_y\coloneqq \max_l(y_{l+1}-y_l)$. The time grid is supposed to be homogeneous in the sense that for some $C\ge 1$, $\delta_t\le C\delta_{t,k}$ uniformly over $k$ and $\nu$. Furthermore, in order to have sufficient spatial regularity of $X_{t_1}$, assume that $\delta_{t,0}\ge C'\nu^{1/(2\beta\gamma)}$ for some $C'>0$ independent of $\nu>0$.
		
		As an intermediate step to a fully discretised estimator, consider the case where we have continuous observations in space, i.e.\ $X_{t_1},\dots, X_{t_n}$. Then define the time discretized estimator
		\begin{equation}
			\hat{\theta}_{\nu,\delta_t }^{\operatorname{disc}} \coloneqq \frac{ \sum_{k=1}^{n-1} \delta_{t,k}\iprod*{B\inv F(X_{t_k})}{B\inv \frac{X_{t_{k+1}}-S_{\nu\delta_{t,k}} X_{t_k}}{\delta_{t,k}}}}{  \sum_{k=1}^{n-1} \delta_{t,k}\norm*{B\inv F(X_{t_k})}^2},\label{eq:Discretization_temp}
		\end{equation}
		similar to \citep{Hildebrandt2021b}. Note that we use the semi-group $S_t$ to integrate forward in time, whence we will not rely on temporal regularity of $X$.
		
		In order to discretise in space, we denote by $L^{\delta_y}\colon C(\Lambda)\to C(\Lambda)$ the piece-wise linear approximation operator such that
			\begin{equation*}
				[L^{\delta_y}f](y_l) = f(y_l),\quad l=0,\dots,m,
			\end{equation*}
		and linear interpolation in between.
		Then  the temporally and spatially discretised estimator is given by
		\begin{equation}
			\hat{\theta}_{\nu,\delta_t ,\delta_y}^{\operatorname{disc}}\coloneqq \frac{\sum_{k=1}^{n-1} \delta_{t,k}\iprod*{B\inv F(L^{\delta_y} X_{t_k})}{B\inv \frac{L^{\delta_y} X_{t_{k+1}}-S_{\nu\delta_{t,k}} L^{\delta_y} X_{t_k}}{\delta_{t,k}}}}{\sum_{k=1}^{n-1} \delta_{t,k}\norm*{B\inv F(L^{\delta_y} X_{t_k})}^2}.\label{eq:Discretization_full}
		\end{equation}
		 The main difficulty for the given spatial discretization is that the commutator between the interpolation operator $L^\delta$ and the semi-group $S_t$ is not easy to control
		 
In order to bound the deviation of $\hat{\theta}_{\nu,\delta_t ,\delta_y}^{\operatorname{disc}}$ from $\hat{\theta}_\nu$, we use Assumption \ref{assump:discretize} for pairs $(p_t,\gamma_t)$ (for the temporal discretisation) and $(p_y,\gamma_y)$ (for the spatial discretisation). Larger values of $\gamma_t$, $\gamma_y$ and $p_y$ will allow for larger mesh sizes. We combine Propositions \ref{prop:discret_temp} and \ref{prop:discret_spat} (below) to obtain:

\begin{theorem}\label{thm:Discret}
			Grant Assumption \ref{assump:discretize} with $0<\gamma_t<(1-1/\beta)/2$ and $p_t=2<1/\tilde{\gamma}_t$ as well as $0<\gamma_y<(1-1/\beta)/2-1/p_y$ and $p_y>1/\tilde{\gamma}_y$. Suppose that $\delta_t =\smallo(\nu^{1/(2\beta\gamma_t)})$, $\delta_y= \smallo(\nu^{(\gamma_y+1/(2\beta))/(\tilde{\gamma}_y-1/p_y)}\delta_t^{1/(\tilde{\gamma}_y-1/p_y)})$ and $\norm{B^{-2} F(z)}_{\gamma,2}\lesssim \norm{z}_{\gamma,2}+1$, $z\in \mathcal{B}_{\gamma,2}$. Then
			\begin{equation*}
				\hat{\theta}_\nu - \hat{\theta}_{\nu,\delta_t ,\delta_y}^{\operatorname{disc}}=\smallo_{\prob*{}}(\phi(\nu)^{-1/2}).
			\end{equation*}
In particular, Theorem \ref{thm:CLTnon-linear} and Proposition \ref{thm:non-linearrate} with the CLT and the estimation rate continue to hold for $\hat{\theta}_{\nu,\delta_t ,\delta_y}^{\operatorname{disc}}$ instead of $\hat{\theta}_\nu$.
\end{theorem}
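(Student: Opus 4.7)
The plan is to apply the triangle inequality
\begin{equation*}
\hat{\theta}_{\nu} - \hat{\theta}_{\nu,\delta_t,\delta_y}^{\operatorname{disc}} = \bigl(\hat{\theta}_{\nu} - \hat{\theta}_{\nu,\delta_t}^{\operatorname{disc}}\bigr) + \bigl(\hat{\theta}_{\nu,\delta_t}^{\operatorname{disc}} - \hat{\theta}_{\nu,\delta_t,\delta_y}^{\operatorname{disc}}\bigr),
\end{equation*}
bound each of the two summands by $\smallo_{\prob*{}}(\phi(\nu)^{-1/2})$ via the Propositions \ref{prop:discret_temp} and \ref{prop:discret_spat} (invoked under $(p_t,\gamma_t)$ and $(p_y,\gamma_y)$ respectively), and conclude the final assertions by combining with Theorem \ref{thm:CLTnon-linear} and Proposition \ref{thm:non-linearrate} through Slutsky's lemma, noting that the denominators of $\hat\theta_\nu$ and its two discretized versions are all equivalent in probability to $\mathcal{I}_\nu\sim\phi(\nu)$ by Lemma \ref{lem:non-linearexpectation}. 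The core of the work therefore consists in sketching why Propositions \ref{prop:discret_temp} and \ref{prop:discret_spat} yield the asserted rate under the stated mesh-size conditions.

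For the temporal step, I would exploit the variation-of-constants identity
\begin{equation*}
X_{t_{k+1}} - S_{\nu\delta_{t,k}}X_{t_k} = \int_{t_k}^{t_{k+1}} S_{\nu(t_{k+1}-s)}\theta F(X_s)\,\D s + \int_{t_k}^{t_{k+1}} S_{\nu(t_{k+1}-s)}B\,\D W_s,
\end{equation*}
so that the numerator difference between $\hat\theta_\nu$ and $\hat\theta_{\nu,\delta_t}^{\operatorname{disc}}$ splits into (a) a drift part governed by $F(X_s)-F(X_{t_k})$ and by the regularizing factor $S_{\nu(t_{k+1}-s)}-I$ in the inner product and (b) a martingale part whose variance one estimates by It\^o's isometry together with $(S_{\nu(t_{k+1}-s)}-I)$. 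Both reductions are controlled by the Lipschitz property of $F$ on $L^2(\Lambda)$ and by Proposition \ref{lem:RegularityResult} with $p_t=2$, which yields $\EV*{\norm{X_t}_{\gamma_t,2}^2}\lesssim \nu^{-2\gamma_t}\phi(\nu)$ for $t\ge \delta_{t,0}$. Standard interpolation-space bounds such as $\norm{(S_{\nu h}-I)z}\lesssim (\nu h)^{\gamma_t}\norm{z}_{\gamma_t,2}$ produce an overall numerator error of order $\delta_t^{\gamma_t}\nu^{-\gamma_t}\phi(\nu)^{1/2}\cdot \nu^{\gamma_t}=\delta_t^{\gamma_t}\phi(\nu)^{1/2}$, which, after using the mesh condition $\delta_t=\smallo(\nu^{1/(2\beta\gamma_t)})$ and dividing by $\mathcal{I}_\nu\sim\phi(\nu)\sim\nu^{-1/\beta}$, is precisely $\smallo_{\prob*{}}(\phi(\nu)^{-1/2})$. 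The initial-step condition $\delta_{t,0}\gtrsim\nu^{1/(2\beta\gamma_t)}$ absorbs the singular factor $t^{-2\gamma_t}$ of Proposition \ref{lem:RegularityResult} on the first interval.

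For the spatial step I would rely on the pointwise bound
\begin{equation*}
\norm{z - L^{\delta_y}z}_{\infty}\lesssim \delta_y^{\tilde\gamma_y-1/p_y}\,\norm{z}_{C^{\tilde\gamma_y-1/p_y}},
\end{equation*}
combined with the Sobolev embedding $W^{\tilde\gamma_y,p_y}\hookrightarrow C^{\tilde\gamma_y-1/p_y}$ (for $p_y>1/\tilde\gamma_y$) and with Proposition \ref{lem:RegularityResult} applied with $(p_y,\gamma_y)$, which yields $\EV*{\norm{X_t}_{C^{\tilde\gamma_y-1/p_y}}^2}\lesssim \nu^{-2\gamma_y}\phi(\nu)$ away from the origin. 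Replacing successively $F(X_{t_k})\leadsto F(L^{\delta_y}X_{t_k})$ and $X_{t_{k+1}}, S_{\nu\delta_{t,k}}X_{t_k}\leadsto L^{\delta_y}X_{t_{k+1}},S_{\nu\delta_{t,k}}L^{\delta_y}X_{t_k}$ in $\hat\theta_{\nu,\delta_t}^{\operatorname{disc}}$ and using the Lipschitz property of $F$ together with the growth bound \eqref{eq:growthbound_L^2} gives a numerator error of order $\delta_y^{\tilde\gamma_y-1/p_y}\nu^{-\gamma_y-1/(2\beta)}\delta_t^{-1}\phi(\nu)^{1/2}$, which becomes $\smallo_{\prob*{}}(\phi(\nu)^{1/2})$ exactly under the chosen scaling $\delta_y=\smallo(\nu^{(\gamma_y+1/(2\beta))/(\tilde\gamma_y-1/p_y)}\delta_t^{1/(\tilde\gamma_y-1/p_y)})$.

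The main obstacle is the spatial step, because the interpolation operator $L^{\delta_y}$ does not commute with the semi-group $S_{\nu\delta_{t,k}}$: the term $S_{\nu\delta_{t,k}}L^{\delta_y}X_{t_k}$ appearing in the definition of $\hat\theta_{\nu,\delta_t,\delta_y}^{\operatorname{disc}}$ cannot simply be compared with $L^{\delta_y}S_{\nu\delta_{t,k}}X_{t_k}$, and one has to control the commutator $(S_{\nu\delta_{t,k}}-I)(L^{\delta_y}-I)X_{t_k}$ against $B^{-2}F(L^{\delta_y}X_{t_k})$ in the inner product. This forces the asymmetric choice of parameters $(p_t,\gamma_t)$ and $(p_y,\gamma_y)$ above: the Hilbert-space framework $p_t=2$ makes the temporal Itô calculus tight, while the larger integrability exponent $p_y$ is needed to turn spatial $W^{\tilde\gamma_y,p_y}$-regularity into a uniform-norm error for the piecewise-linear interpolant. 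Additional mild care is required at the boundary index $k=0$, where the initial-step condition $\delta_{t,0}\ge C'\nu^{1/(2\beta\gamma)}$ prevents the singular $t^{-2\gamma}$ term of Proposition \ref{lem:RegularityResult} from spoiling the bound.
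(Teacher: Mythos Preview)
Your proposal is correct and follows essentially the same route as the paper: the theorem is obtained precisely by combining Propositions \ref{prop:discret_temp} and \ref{prop:discret_spat} via the triangle inequality, and your sketches of those two propositions (variation-of-constants plus It\^o isometry and interpolation-space bounds for the temporal part; H\"older interpolation error plus Sobolev embedding and the regularity of Proposition \ref{lem:RegularityResult} for the spatial part) match the paper's arguments closely, including the identification of the $L^{\delta_y}$--$S_{\nu\delta_{t,k}}$ commutator as the reason one bounds the spatial terms individually rather than re-expanding via variation-of-constants.
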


\begin{example}[continued]
		Consider the stochastic heat equation with non-linear reaction from \eqref{eq:Guiding_Example} with $\alpha=\beta=2$. We have already seen that Assumption \ref{assump:discretize} holds for any $p\ge 2$. We can choose $\gamma_t$ and $\gamma_y$ arbitrarily close (but strictly less than) $1/4$ and $\tilde{\gamma}_k=2\gamma_k$, $k\in\Set{t,y}$. Then the conclusions of Theorem \ref{thm:Discret} hold whenever
			\begin{equation*}
				\delta_t \sim \nu^{1+\epsilon_1},\quad \delta_y \sim \nu^{1+\epsilon_2}\delta_t^{2+\epsilon_3}\sim \nu^{3+\epsilon_1+\epsilon_2+\epsilon_3}
			\end{equation*}
for any $\epsilon_1,\epsilon_2,\epsilon_3>0$.
\end{example}

%%%%%%%%%%%%%%%%%%%%%%%%%%%%%%%%%%%%%%%%%%%%%%
%% Single Appendix:                         %%
%%%%%%%%%%%%%%%%%%%%%%%%%%%%%%%%%%%%%%%%%%%%%%
%\begin{appendix}
%\section*{???}%% if no title is needed, leave empty \section*{}.
%\end{appendix}
%%%%%%%%%%%%%%%%%%%%%%%%%%%%%%%%%%%%%%%%%%%%%%
%% Multiple Appendixes:                     %%
%%%%%%%%%%%%%%%%%%%%%%%%%%%%%%%%%%%%%%%%%%%%%%
%\begin{appendix}
%\section{???}
%
%\section{???}
%
%\end{appendix}

\begin{appendix}	

\section{Proofs for Section \ref{sec:Intro}}\label{subsec:Proof_Lemma_Example}
	
\begin{proof}[Proof of Lemma \ref{lem:examples_Assumptions}]
		
%			\setcounter{example}{0}\begin{example}[continued]\todo{put this checks in extra section in appendix}
%		Theorem 6.4 of \cite{Hairer2009} together with the restriction that $\alpha>1$ ensure that Assumption \ref{assump:general} is satisfied in the exemplary case of \eqref{eq:SPDE_Example}.
%	\end{example}
	First we check that Assumption \ref{assump:generalA} is satisfied. Using the explicit form of the eigenvalues $\mu_k=\lambda_k^{\alpha/2}=(k\pi/l)^\alpha$, $k\in\mathbb{N}$, of $(-\Updelta)^{\alpha/2}$, we find
		\begin{equation*}
			\phi(\nu,t)=\int_0^t \norm*{S_{\nu s}}_{\operatorname{HS}}^2\,\D s= \sum_{k\in\mathbb{N}} \int_0^te^{-2\mu_k\nu s}\D s \sim \sum_{k\in\mathbb{N}} \big((\nu k^\alpha)^{-1}\wedge t\big)\sim \nu^{-1/\alpha}t^{1-1/\alpha}.
		\end{equation*}
		Note that the eigenfunctions $(e_k)_{k\in\mathbb{N}}$ are smooth and uniformly bounded in supremum norm. Hence, the fractional heat kernel is given by
		\begin{equation}
			G_t(x,y) = \sum_{k\in\mathbb{N}} e^{-\lambda_k^{\alpha/2} t}e_k(x)e_k(y),\quad t\ge 0, x,y\in[0,l],\label{eq:fractional_kernel}
		\end{equation}
		with convergence in supremum norm.
		
		We now proceed to Assumption \ref{assump:specific_F}. As $(-\Updelta)^{\alpha/2}$ is the generator of a subordinated stopped Brownian motion \cite{Lischke2020}, $G_{\nu t}(x,y)\D y$ defines a measure on $\Lambda$ for all $\nu ,t>0$, $x\in\Lambda$, whose total mass is uniformly (over $(\nu t,x)$) bounded by 1. Furthermore, we have
		\begin{equation}
			\sup_{x\in\Lambda}\norm*{G_{\nu t}(x,\MTemptyplaceholder)}^2 \le \frac{2}{l}\sum_{k\in\mathbb{N}} e^{-2\lambda_k^{\alpha/2} \nu t} = \frac{2}{l}\norm*{S_{\nu t}}_{\operatorname{HS}}^2,\quad \nu, t\ge 0.\label{eq:frac_Laplace_kernelnorm}
		\end{equation}
		Two Picard iterations as in Theorems 6.2 and 7.1 of \cite{SanzSole2005} show that these properties ensure that Assumption \ref{assump:specific_F} is satisfied.
		\end{proof}
		
	\section{Proofs for Section \ref{sec:MainResults}}\label{sec:Proofs_Main}

	\subsection{\textbf{Proof of Lemma \ref{lem:MLE_decomp}}}\label{subsec:proof_MLE}
	
%		\begin{proof}[Proof of Lemma \ref{lem:MLE_decomp}]
			Assumption \ref{assump:general} ensures that the Girsanov Theorem for semi-linear SPDEs, Theorem 10.18 of \cite{DaPrato2014}, is applicable and we will follow Remark 10.19 of \cite{DaPrato2014}. To this end, let $(Z_t)_{t\in[0,T]}$ be the unique weak solution to the heat equation $\D Z_t = \nu AZ_t + B\D \tilde{W}_t$, $Z_0=X_0$, driven by a $\prob*{}$-cylindrical Brownian motion $\tilde{W}$ and define the probability measure $\qrob*[\theta]{}$ on $(\Omega,\mathcal{F})$  via its Radon-Nikodym derivative
			\begin{equation*}
				\qrob*[\theta]{\D\omega}\coloneqq \exp{\int_0^T \iprod*{B\inv\theta F(X_t)}{\D \tilde{W}_t}(\omega) -\frac{1}{2}\int_0^T \norm*{B\inv\theta F(X_t(\omega))}^2\,\D t}\prob*{\D\omega}.
			\end{equation*}
			Theorem 10.18 of \cite{DaPrato2014} shows that $W_t\coloneqq \tilde{W}_t - \theta\int_0^t B\inv F(X_s)\,\D s$ is a $\qrob*[\theta]{}$-cylindrical Brownian motion. Moreover, $Z_t$ is a solution to the semi-linear SPDE \eqref{eq:SPDE} under $\qrob*[\theta]{}$. By uniqueness (in law) of the solution to \eqref{eq:SPDE}, the push-forward measure of $\qrob*[\theta]{}$ under $Z$ coincides with $\prob*[\theta]{}$ and it therefore suffices to find an expression for
			\begin{equation*}
				L(\theta,X)\coloneqq \EV*{\exp{\theta\int_0^T\iprod*{B\inv F(X_t)}{\D \tilde{W}_t} - \frac{\theta^2}{2}\int_0^T \norm*{B\inv F(X_t)}^2\,\D t}\given (X_t)_{t\in[0,T]}},
			\end{equation*}
			depending only on the path $(X_t)_{t\in[0,T]}$, and to maximise it with respect to $\theta$. To this end, take a complete orthonormal system $(u_k)_{k\in\mathbb{N}}$ such that $B\inv u_k\in\operatorname{dom}(A)$ for all $k\in\mathbb{N}$. Coefficient-wise computations show with convergence in $L^2(\prob*{})$
			\begin{equation}
				\begin{split}\int_0^T \iprod*{B\inv F(X_t)}{\D \tilde{W}_t} = \begin{multlined}[t]
	\sum_{k=1}^\infty\left[\int_0^T\iprod*{B^{-1}F(X_t)}{u_k}\D\iprod*{u_k}{B^{-1}X_t}\right.\\
	\left.-\nu\int_0^T \iprod*{B^{-1}F(X_t)}{u_k}\iprod*{X_t}{A^\ast B\inv u_k}\,\D t\right].
	\end{multlined}
	\end{split}\label{eq:likelihood}
			\end{equation}
This expression only depends on the path $X$ and we maximise the likelihood $L(\theta,X)$ with respect to $\theta\in{\mathbb R}$ to obtain
			\begin{equation*}
				\hat{\theta}_\nu = \frac{\int_0^T \iprod{B\inv F(X_t)}{\D\tilde{W}_t}}{\int_0^T \norm*{B\inv F(X_t)}^2\,\D t}.
			\end{equation*}
			Substituting $\tilde{W}_t = W_t + \theta\int_0^T B\inv F(X_t)\,\D t$, where $W_t$ is a cylindrical Brownian motion under $\qrob*[\theta]{}$, yields the last claim.\hfill\qed

%		\end{proof}		
		
	\subsection{\textbf{Proof of Proposition \ref{lem:LANProperty}}}\label{subsec:Proof_LAN}
	
%		\begin{proof}[Proof of Lemma \ref{lem:LANProperty}]
Under $\prob*[\theta]{}$ we have
\begin{align*}
&\ell(X,\theta+h_\nu)-\ell(X,\theta)\\
&\overset{d}{=}	\int_0^T(\theta + h_\nu)\iprod*{B\inv F(X_t)}{\D W_t + \theta B\inv F(X_t)\,\D t}-		\frac{1}{2}\int_0^T (\theta+h_\nu)^2 \norm*{B\inv F(X_t)}^2\,\D t\\
&\quad -\int_0^T\theta\iprod*{B\inv F(X_t)}{\D W_t + \theta B\inv F(X_t)\,\D t}+
						\frac{1}{2}\int_0^T \theta^2 \norm*{B\inv F(X_t)}^2\,\D t\\
						&= h_\nu\int_0^T \iprod*{B\inv F(X_t)}{\D W_t}-\frac{1}{2}h_\nu^2 \int_0^T \norm*{B\inv F(X_t)}^2\,\D t.
\end{align*}
Exactly as in the proof of Theorem \ref{thm:CLTnon-linear}, applying a martingale CLT (Theorem 5.5.4 of \cite{Liptser1989}) to the first term, Slutzky's Lemma and Lemma \ref{lem:Variance_Bound} yields the claimed convergence.\hfill \qed
%			\end{proof}
		
\subsection{\textbf{Proofs for Subsection \ref{subsec:towardsCLT}}}\label{sec:Proofs_CLT}
	
\subsubsection{Poincaré inequality}
	In order to prove Lemma \ref{lem:boundedness_convergence}, we will use a Picard iteration to approximate $\mathcal{D}X_t(y)$. For fixed $\tau\in[0,T]$, $t\in(\tau,T]$, define the sequence
	\begin{equation}
		\begin{cases}u_{t,\tau}^{(0)}(y)\coloneqq G_{\nu(t-\tau)}(y,\MTemptyplaceholder),\\
		u_{t,\tau}^{(n)}(y)\coloneqq u_{t,\tau}^{(0)}(y) + \int_\tau^t \int_\Lambda G_{\nu(t-s)}(y,\eta)f'(X_s(\eta))u_{s,\tau}^{(n-1)}(\eta)\,\D\eta\D s,
		\end{cases}\label{eq:sequence_diff}
	\end{equation}
	$y\in\Lambda$, $n\in\mathbb{N}$, taking values in $\mathbb{H}$. For $t\in[0,\tau]$, $u_{t,\tau}^{(n)}(y) \equiv 0$ for all $y\in\Lambda$, $n\in\mathbb{N}_0$.
			\begin{lemma}\label{lem:Picard2}
			Grant Assumptions \ref{assump:generalA} and \ref{assump:specific_F}. For all $\phi\in L^2(\Lambda)$ with $\norm*{\phi}=1$, $\nu>0$, $\tau\in[0,T]$, $t\in(\tau,T]$ and  all realizations $(X_s)_{s\in(\tau,t]}$, the sequence $(u_{t,\tau}^{(n)})_{n\in\mathbb{N}}$ given by \eqref{eq:sequence_diff} satisfies
				\begin{equation*}
					K_n\coloneqq \norm*{\int_\Lambda \phi(y)u_{t,\tau}^{(n)}(y)\,\D y}_{\mathbb{H}} \le \norm*{B} + \sum_{m=1}^n \frac{\norm*{B}}{m!}\norm*{f'}_\infty^{m}(t-\tau)^{m}.
				\end{equation*}
				In particular, we have $K_n\le \norm*{B}e^{\norm*{f'}_\infty t}<\infty$ for all $\tau$, $t$, $n$, $\phi$ and realizations of $(X_t)_{t\in(\tau,T]}$.
\end{lemma}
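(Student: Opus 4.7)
The plan is to proceed by induction on $n$, pivoting on two facts: the pairing $\int_\Lambda \phi(y)\,(\,\cdot\,)\,\D y$ collapses each kernel factor $G_{\nu(t-s)}(y,\MTemptyplaceholder)$ into an adjoint semigroup action $S^\ast_{\nu(t-s)}\phi$, and the contractivity of $(S_t)$ from Assumption \ref{assump:generalA} turns the recursion into a Gronwall-type estimate. Uniformity of the induction hypothesis in the test function $\phi$ (with $\norm*{\phi}=1$) and, pathwise, in $X$ will be maintained throughout.

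For the base case $n=0$, the kernel representation $S_t z(y)=\iprod*{G_t(y,\MTemptyplaceholder)}{z}$ and Fubini yield $\int_\Lambda \phi(y)G_{\nu(t-\tau)}(y,\MTemptyplaceholder)\,\D y = S^\ast_{\nu(t-\tau)}\phi$. Since $(S_t)$ (and hence $(S^\ast_t)$) is a contraction,
\begin{equation*}
K_0 = \norm*{B^\ast S^\ast_{\nu(t-\tau)}\phi} \le \norm*{B^\ast}\norm*{\phi} = \norm*{B},
\end{equation*}
matching the claimed bound (empty sum).

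For the inductive step, assume the bound holds at level $n-1$ uniformly in unit vectors and pathwise in $X$. Applying Fubini once more to the $y$-integration in the defining recursion produces
\begin{equation*}
\int_\Lambda \phi(y)u^{(n)}_{t,\tau}(y)\,\D y = S^\ast_{\nu(t-\tau)}\phi + \int_\tau^t \int_\Lambda h_s(\eta)\,u^{(n-1)}_{s,\tau}(\eta)\,\D\eta\,\D s,
\end{equation*}
where $h_s(\eta):=(S^\ast_{\nu(t-s)}\phi)(\eta)\,f'(X_s(\eta))$ satisfies $\norm*{h_s}\le\norm*{f'}_\infty$ by contractivity and Assumption \ref{assump:specific_F}. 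Normalising $\tilde\phi_s:=h_s/\norm*{h_s}$ (on $\{h_s\neq 0\}$) to a unit vector and invoking the induction hypothesis with $\tilde\phi_s$ at time $s$ yields
\begin{equation*}
\norm*{\int_\Lambda h_s(\eta)u^{(n-1)}_{s,\tau}(\eta)\,\D\eta}_{\mathbb{H}} \le \norm*{f'}_\infty\Bigl(\norm*{B} + \sum_{m=1}^{n-1}\tfrac{\norm*{B}}{m!}\norm*{f'}_\infty^m(s-\tau)^m\Bigr).
\end{equation*}
A Minkowski-type triangle inequality for the outer $s$-integral, term-by-term integration in $s\in[\tau,t]$, and an index shift $m\mapsto m+1$ produce exactly the stated bound for $K_n$. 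Bounding the finite sum by the Taylor series of $\exp(\norm*{f'}_\infty t)$ delivers the exponential estimate.

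The only real subtlety, rather than an obstacle, is the uniformity issue: in the inductive step one applies the hypothesis with the random unit vector $\tilde\phi_s$, so the bound must be uniform in the test function and pathwise in $X$. This is automatic because $X$ enters only through $f'(X_s)$, which is controlled by $\norm*{f'}_\infty$; in particular no integrability or regularity of $X$ is needed, explaining why the estimate holds for \emph{every} realisation of $X$.
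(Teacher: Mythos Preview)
Your proof is correct and follows essentially the same strategy as the paper: induction on $n$, with the base case handled via $\int_\Lambda \phi(y)G_{\nu(t-\tau)}(y,\MTemptyplaceholder)\,\D y=S^\ast_{\nu(t-\tau)}\phi$ and contractivity, and the inductive step by absorbing the inner $y$-integral into the new test function $h_s=f'(X_s)\,S^\ast_{\nu(t-s)}\phi$ (the paper's $\tilde\phi$) before integrating term by term in $s$. The only cosmetic difference is that you normalise by $\norm*{h_s}$ whereas the paper normalises by $\norm*{f'}_\infty$; both are valid since the bound is homogeneous in the test function.
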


\begin{proof}[Proof (by induction)] We  have
			\begin{align*}
				K_0^2 &=\norm*{\int_\Lambda\phi(y)G_{\nu(t-\tau)}(y,\MTemptyplaceholder)\,\D y}_{\mathbb{H}}^2=  \norm*{BS_{\nu(t-\tau)} \phi}^2 \le \norm*{B}^2 \norm*{S_{\nu(t-\tau)} \phi}^2\\
				&\le \norm*{B}^2\norm*{\phi}^2=\norm*{B}^2.
			\end{align*}
			For the step $n\leadsto n+1$ bound
			\begin{align*}
				K_{n+1} &= \norm*{K_0+ \int_\Lambda\phi(y)\int_\tau^t \int_\Lambda G_{\nu(t-s)}(y,\eta)f'(X_s(\eta))u_{s,\tau}^{(n)}(\eta)\,\D \eta\D s\D y}_{\mathbb{H}}\\
				&\le \norm*{B} + \int_\tau^t \norm*{\int_\Lambda u_{s,\tau}^{(n)}(\eta)f'(X_s(\eta))\int_\Lambda G_{\nu(t-s)}(y,\eta)\phi(y)\,\D y\D\eta}_{\mathbb{H}}\,\D s.
			\end{align*}
			Define $\tilde{\phi}(\MTemptyplaceholder)\coloneqq f'(X_s(\MTemptyplaceholder))\int_\Lambda G_{\nu(t-s)}(y,\MTemptyplaceholder)\phi(y)\,\D y = f'(X_s(\MTemptyplaceholder)) S_{\nu(t-s)}^\ast \phi (\MTemptyplaceholder)$ and note that $\norm{\tilde{\phi}}\le \norm*{f'}_\infty$. Using the induction hypothesis for $t=s$ and $\tilde{\phi}/\norm{f'}_\infty$, we find
			\begin{align*}
				 K_{n+1}&\le \norm*{B} + \norm*{f'}_\infty\int_\tau^t\left(\norm*{B} + \sum_{m=1}^n \frac{\norm*{B}}{m!}\norm*{f'}_\infty^{m}(s-\tau)^{m}\right)\,\D s\\
				 &=\norm*{B} + \norm*{f'}_\infty(t-\tau)\norm*{B} + \sum_{m=1}^n \frac{\norm*{B}}{m!(m+1)}\norm*{f'}_\infty^{m+1}(t-\tau)^{m+1}\\
				 &= \norm*{B} + \sum_{m=1}^{n+1}\frac{\norm*{B}}{m!}\norm*{f'}_\infty^m(t-\tau)^m.\qedhere
			\end{align*}
		\end{proof}
		\begin{lemma}\label{lem:convergence}
			Grant Assumptions \ref{assump:generalA} and \ref{assump:specific_F}. For all $\tau\in[0,T]$, $t\in(\tau,T]$, $\nu>0$ and all realizations $(X_s)_{s\in(\tau,t]}$ we have
			\begin{equation*}
				\sup_{\norm*{\phi}=1}\norm*{\int_\Lambda\phi(y)\mathcal{D}_\tau X_t(y)\,\D y - \int_\Lambda\phi(y)u_{t,\tau}^{(n)}(y)\,\D y}_{\mathbb{H}}\xrightarrow{n\to\infty}0.
			\end{equation*}
		\end{lemma}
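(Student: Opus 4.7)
The plan is to show that the Picard sequence $(u_{t,\tau}^{(n)})_{n\in\mathbb{N}}$ is Cauchy in the sense weakened by integration against unit test functions $\phi$, and then to identify the limit with $\mathcal{D}_\tau X_t$ via a uniqueness argument. Setting $\Delta_{t,\tau}^{(n)}\coloneqq u_{t,\tau}^{(n)} - u_{t,\tau}^{(n-1)}$ for $n\ge 1$, the affine structure of the recursion \eqref{eq:sequence_diff} yields
\begin{equation*}
\Delta_{t,\tau}^{(n)}(y) = \int_\tau^t\int_\Lambda G_{\nu(t-s)}(y,\eta)f'(X_s(\eta))\Delta_{s,\tau}^{(n-1)}(\eta)\,\D\eta\D s.
\end{equation*}
Mimicking the inductive step of Lemma~\ref{lem:Picard2} (Fubini, followed by passing to $\tilde\phi_s(\eta)\coloneqq f'(X_s(\eta))[S_{\nu(t-s)}^\ast\phi](\eta)$ with $\|\tilde\phi_s\|\le\|f'\|_\infty$), one obtains
\begin{equation*}
r_n(t,\tau)\coloneqq\sup_{\|\phi\|=1}\left\|\int_\Lambda\phi(y)\Delta_{t,\tau}^{(n)}(y)\,\D y\right\|_{\mathbb{H}}\le\|f'\|_\infty\int_\tau^t r_{n-1}(s,\tau)\,\D s.
\end{equation*}
Since $r_0\le \|B\|$ by the base case of Lemma~\ref{lem:Picard2}, induction gives the bound $r_n(t,\tau)\le \|B\|(\|f'\|_\infty(t-\tau))^n/n!$, which is summable in $n$. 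Consequently, for any $\|\phi\|=1$, the sequence $\bigl(\int_\Lambda\phi(y)\,u_{t,\tau}^{(n)}(y)\,\D y\bigr)_n$ is Cauchy in $\mathbb{H}$ uniformly in $\phi$, converging to some $v_{t,\tau}(\phi)\in\mathbb{H}$.

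Second, I identify $v_{t,\tau}(\phi)$ with $\int_\Lambda\phi(y)\mathcal{D}_\tau X_t(y)\,\D y$. Passing to the limit in the Picard recursion (permitted by the uniform bound of Lemma~\ref{lem:Picard2} via dominated convergence), $v$ satisfies the tested form of the integral equation for $\mathcal{D}_\tau X_t$ stipulated in Assumption~\ref{assump:specific_F}. The tested difference $d(t,\tau,\phi)\coloneqq\int_\Lambda\phi(y)\mathcal{D}_\tau X_t(y)\,\D y - v_{t,\tau}(\phi)$ therefore solves a linear Volterra-type equation with zero source, and the same estimate that produced the recursion for $r_n$ yields $\sup_{\|\phi\|=1}\|d(t,\tau,\phi)\|_{\mathbb{H}}\le \|f'\|_\infty\int_\tau^t \sup_{\|\phi\|=1}\|d(s,\tau,\phi)\|_{\mathbb{H}}\,\D s$. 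A standard Gronwall argument then forces $d\equiv 0$, provided $\sup_{\|\phi\|=1}\|d(\cdot,\tau,\phi)\|_{\mathbb{H}}$ is a priori finite.

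The step I expect to be delicate is precisely this a priori finiteness of $\sup_{\|\phi\|=1}\|\int_\Lambda\phi(y)\mathcal{D}_\tau X_t(y)\,\D y\|_{\mathbb{H}}$, which is needed to launch the Gronwall iteration. This bound has to be extracted from the existence of $\mathcal{D}_\tau X_t\in\mathfrak{H}$ in Assumption~\ref{assump:specific_F} together with a Fubini-type integrability of $\phi(y)\mathcal{D}_\tau X_t(y)$ against $\D y$; the uniform Picard bound of Lemma~\ref{lem:Picard2} together with Fatou provides one such control. Once this is secured, the combination of the Picard-Cauchy property and the Volterra uniqueness completes the proof.
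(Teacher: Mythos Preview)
Your approach is correct but more circuitous than the paper's. Instead of first proving the tested Picard sequence is Cauchy and then separately identifying the limit with $\mathcal{D}_\tau X_t$ via a Volterra-uniqueness argument, the paper works directly with the difference
\[K_n(t,\phi)\coloneqq \Bigl\|\int_\Lambda\phi(y)\bigl[\mathcal{D}_\tau X_t(y)-u_{t,\tau}^{(n)}(y)\bigr]\,\D y\Bigr\|_{\mathbb{H}}.\]
Because $\mathcal{D}_\tau X_t$ (via Assumption~\ref{assump:specific_F}) and $u_{t,\tau}^{(n)}$ (via~\eqref{eq:sequence_diff}) share the same inhomogeneity $G_{\nu(t-\tau)}(y,\cdot)$, this source term cancels in the difference, and the very same $\tilde\phi$-substitution you describe yields
\[\sup_{\|\phi\|=1}K_n(t,\phi)\le \|f'\|_\infty\int_\tau^t \sup_{\|\phi\|=1}K_{n-1}(s,\phi)\,\D s.\]
A single invocation of the Gronwall-type Lemma~6.2 in Sanz-Sol\'e (with $k_1=k_2=0$) then forces $\sup_\phi K_n\to 0$, which subsumes both your Cauchy step and your identification step at once. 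Your route reaches the same destination but with an unnecessary detour through the abstract limit $v_{t,\tau}(\phi)$.

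On the delicate point you flag: it is real, and it is equally present in the paper's direct argument (the Sanz-Sol\'e lemma needs the base quantity $\sup_\phi K_0(\cdot,\phi)$ to be locally integrable). Your proposed remedy via Lemma~\ref{lem:Picard2} and Fatou, however, only controls the Picard limit $v_{t,\tau}(\phi)$, not $\int_\Lambda\phi\,\mathcal{D}_\tau X_t$ itself, so it does not close the gap as stated. The cleanest resolution is that in the Sanz-Sol\'e framework invoked to verify Assumption~\ref{assump:specific_F} (cf.\ the proof of Lemma~\ref{lem:examples_Assumptions}), the Malliavin derivative $\mathcal{D}_\tau X_t(y)$ is itself obtained as the limit of precisely this Picard scheme, so the uniform bound of Lemma~\ref{lem:Picard2} is inherited by $\mathcal{D}_\tau X_t$ directly.
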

		\begin{proof}
			Using the Fubini Theorem for Bochner integrals (Equation (1.9) of \cite{DaPrato2014}), we find for any $\phi\in L^2(\Lambda)$ with $\norm*{\phi}=1$ that
			\begin{align*}
				K_n(t,\phi)&:= \norm*{\int_\Lambda\phi(y)\mathcal{D}_\tau X_t(y)\,\D y - \int_\Lambda\phi(y)u_{t,\tau}^{(n)}(y)\,\D y}_{\mathbb{H}}\\
&= \norm*{\int_\Lambda \phi(y)\int_\tau^t\int_\Lambda G_{\nu(t-s)}(y,\eta)f'(X_s(\eta))[\mathcal{D}_\tau X_s(\eta)- u_{s,\tau}^{(n-1)}(\eta)]\,\D\eta\D s\D y}_\mathbb{H}\\
				&\le\int_\tau^t \norm*{\int_\Lambda [\mathcal{D}_\tau X_s(\eta)- u_{s,\tau}^{(n-1)}(\eta)]f'(X_s(\eta))\int_\Lambda G_{\nu(t-s)}(y,\eta)\phi(y)\,\D y\D\eta}_{\mathbb{H}}\D s.
			\end{align*}			
			Define $\tilde{\phi}(\MTemptyplaceholder)\coloneqq f'(X_s(\MTemptyplaceholder))\int_\Lambda G_{\nu(t-s)}(y,\MTemptyplaceholder)\phi(y)\,\D y = f'(X_s(\MTemptyplaceholder))S_{\nu(t-s)}^\ast\phi(\MTemptyplaceholder)$ and note that $\norm*{\tilde{\phi}}\le \norm*{f'}_\infty$. Then we see that
			\begin{align*}
				K_n(t,\phi)&\le \norm*{f'}_\infty\int_\tau^t \norm*{\int_\Lambda\frac{\tilde{\phi}(\eta)}{\norm*{\tilde{\phi}}}[\mathcal{D}_\tau X_s(\eta)- u_{s,\tau}^{(n-1)}(\eta)]\,\D\eta}_{\mathbb{H}}\D s\\
				&\le \norm*{f'}_\infty\int_\tau^t \sup_{\norm*{\phi}=1}K_{n-1}(s,\phi)\,\D s.
			\end{align*}
			Gronwall's Lemma in the specific form of Lemma 6.2 of \cite{SanzSole2005} with $k_1=k_2=0$, applied to $\sup_{\norm*{\phi}=1}K_{n}(t,\phi)$, yields the claim.
		\end{proof}
		\begin{proof}[Proof of Lemma \ref{lem:boundedness_convergence}]
			The claim is trivial for $t\le\tau$. Hence assume that $t>\tau$. Lemma \ref{lem:convergence} shows that $\int_\Lambda \phi(y)u_{t,\tau}^{(n)}(y)\,\D y$ converges in $\mathbb{H}$ to $\int_\Lambda \phi(y)\mathcal{D}_\tau X_t(y)\,\D y$ for any $\phi\in L^2(\Lambda)$, $\tau\in[0,T]$ and $t\in(\tau,T]$. We conclude by using the triangle inequality with respect to $\norm*{}_{\mathbb{H}}$ and Lemma \ref{lem:Picard2}.
		\end{proof}
		\begin{proof}[Proof of Lemma \ref{lem:Variance_Bound}]
		The Poincaré inequality gives $\var(\mathcal{I}_\nu)\le \EV*{\norm*{\mathcal{D}\mathcal{I}_\nu}_\mathfrak{H}^2}$.
We compute, using Lemma \ref{lem:boundedness_convergence}:
\begin{align*}
		 \EV*{\norm*{\mathcal{D}\mathcal{I}_\nu}_\mathfrak{H}^2}&= 4\EV*{\int_0^T\norm*{\int_\tau^T\int_\Lambda \frac{f(X_t(\eta))f'(X_t(\eta))}{\sigma(\eta)^2}\mathcal{D}_{\tau}X_t(\eta)\,\D\eta \D t}_{\mathbb{H}}^2\D \tau}\\
		 &\le 4\EV*{\int_0^T(T-\tau)\int_\tau^T\norm*{\int_\Lambda \frac{f(X_t(\eta))f'(X_t(\eta))}{\sigma(\eta)^2}\mathcal{D}_{\tau}X_t(\eta)\,\D\eta}_{\mathbb{H}}^2\,\D t\D \tau}\\
		 &\le 4\norm*{B}^2e^{2\norm*{f'}_\infty T}\EV*{\int_0^T (T-\tau)\int_\tau^T\norm*{\frac{f(X_t)f'(X_t)}{\sigma^2}}^2\,\D t\,\D \tau}\\
		 &\le 4\norm*{B}^2e^{2\norm*{f'}_\infty T}T^2\norm*{B\inv}^2\norm*{f'}_\infty^2 \EV*{\mathcal{I}_\nu}.
	\end{align*}
	Applying the Chebyshev inequality and recalling $\EV*{\mathcal{I}_\nu}\to\infty$ completes the proof.
	\end{proof}
	
	\subsubsection{Da Prato-Debussche trick}
	
%	Note that $\bar{X}_t$ and $\tilde{X}_t$ satisfy
%	\begin{align*}
%		\D \bar{X}_t &= \nu A\bar{X}_t\,\D t + B\,\D W_t,\quad \bar{X}_0 \sim \mu,\\
%		\D\tilde{X}_t&=\nu A\tilde{X}_t\,\D t + \theta F(\bar{X}_t + \tilde{X}_t)\,\D t,\quad \tilde{X}_0=0.
%	\end{align*}\todo{remove?}
%	Using the Gaussianity of $\bar{X}$, we can deduce that the variance of $\bar{\mathcal{I}}_\nu$ is of smaller magnitude than its expectation squared.
	We start by controlling the linear part $\bar{\mathcal{I}}_{t,\nu}$.

	\begin{lemma}\label{lem:variance_linear}
		Grant Assumptions \ref{assump:general} and \ref{assump:generalA}. Then $\var(\bar{\mathcal{I}}_{t,\nu})\lesssim \EV*{\bar{\mathcal{I}}_{t,\nu}}$ holds for any $t\in[0,T]$.
	\end{lemma}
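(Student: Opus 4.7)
The plan is to mimic the strategy used for $\mathcal{I}_\nu$ in Lemma \ref{lem:Variance_Bound}, namely to combine the Gaussian Poincar\'e inequality with an explicit computation of the Malliavin derivative, but now exploiting the simpler structure of the linear part $\bar X$. Since $\bar{\mathcal{I}}_{t,\nu}$ is a measurable functional of the isonormal process $\mathcal{W}$, Proposition 3.1 of \cite{Nourdin2009} yields
\begin{equation*}
  \var(\bar{\mathcal{I}}_{t,\nu}) \le \EV*{\norm*{\mathcal{D}\bar{\mathcal{I}}_{t,\nu}}_{\mathfrak{H}}^2}.
\end{equation*}

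First I would compute $\mathcal{D}_\tau\bar{X}_s(y)$. Unlike the non-linear case, the Skorokhod integral representation from Remark \ref{rmk:Skorokhod_Intergal} together with the standard rule for divergence integrals gives directly $\mathcal{D}_\tau\bar{X}_s(y)=G_{\nu(s-\tau)}(y,\MTemptyplaceholder)\in\mathbb{H}$ for $\tau<s$ and $0$ otherwise (no Picard iteration is required since $f\equiv 0$ for the linear part). Applying the chain rule of Malliavin calculus (Proposition 1.2.3 of \cite{Nualart2009}) to $\bar{\mathcal{I}}_{t,\nu}=\int_0^t\int_\Lambda \sigma(\eta)^{-2}\bar{X}_s(\eta)^2\,\D\eta\D s$ yields
\begin{equation*}
  \mathcal{D}_\tau \bar{\mathcal{I}}_{t,\nu} = 2\int_\tau^t\int_\Lambda \frac{\bar{X}_s(\eta)}{\sigma(\eta)^2}\, G_{\nu(s-\tau)}(\MTemptyplaceholder,\eta)\,\D\eta\D s \in\mathbb{H}.
\end{equation*}

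Next I would bound the inner integral uniformly in the realization. Setting $\phi_s\coloneqq B^{-2}\bar{X}_s=\sigma^{-2}\bar{X}_s\in L^2(\Lambda)$, the inner integral is precisely $S_{\nu(s-\tau)}^\ast \phi_s$, hence
\begin{equation*}
  \norm*{\int_\Lambda \phi_s(\eta)\,G_{\nu(s-\tau)}(\MTemptyplaceholder,\eta)\,\D\eta}_{\mathbb{H}}^2 = \norm*{B S_{\nu(s-\tau)}^\ast \phi_s}^2\le \norm*{B}^2\norm*{\phi_s}^2 \le \norm*{B}^2\norm*{B\inv}^2\norm*{B\inv\bar{X}_s}^2,
\end{equation*}
using that $(S_t)_{t\ge 0}$ is a contraction semi-group (so the same holds for the adjoint) and that $\norm*{\sigma^{-2}z}\le \norm*{B\inv}^2\norm*{\sigma^{-1}z}\cdot (\text{constant})$; more cleanly, $\norm*{\phi_s}^2=\int (B\inv\bar{X}_s)^2\sigma^{-2}\,\D\eta\le \norm*{B\inv}^2\norm*{B\inv\bar{X}_s}^2$.

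Finally, by Cauchy--Schwarz in $s$ and Fubini,
\begin{equation*}
  \norm*{\mathcal{D}_\tau\bar{\mathcal{I}}_{t,\nu}}_{\mathbb{H}}^2 \le 4(t-\tau)\int_\tau^t \norm*{B}^2\norm*{B\inv}^2\norm*{B\inv\bar{X}_s}^2\,\D s,
\end{equation*}
so integrating over $\tau\in[0,t]$ and exchanging the order of integration,
\begin{equation*}
  \EV*{\norm*{\mathcal{D}\bar{\mathcal{I}}_{t,\nu}}_{\mathfrak{H}}^2} \le 4T^2\norm*{B}^2\norm*{B\inv}^2\, \EV*{\bar{\mathcal{I}}_{t,\nu}},
\end{equation*}
which, combined with the Poincar\'e bound, yields the claim. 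There is no real obstacle here; the only point to be careful about is the correct identification of $\mathcal{D}_\tau\bar{X}_s$ as a $\mathbb{H}$-valued object (the weights coming from $B=\sigma\cdot$), but this is handled cleanly by viewing $\bar X$ as the Skorokhod integral of the Green function and by working with $\phi_s=B^{-2}\bar X_s$.
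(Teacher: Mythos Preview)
Your proof is correct and follows essentially the same route as the paper: apply the Poincar\'e inequality, identify $\mathcal{D}_\tau\bar X_s(y)$ with the Green function, use the chain rule to obtain $\mathcal{D}_\tau\bar{\mathcal{I}}_{t,\nu}$, bound the inner spatial integral via the contraction property of the semi-group, and finish with Cauchy--Schwarz in $s$ to arrive at $4t^2\norm{B}^2\norm{B\inv}^2\EV{\bar{\mathcal{I}}_{t,\nu}}$. The only cosmetic slip is the argument placement in the Green function: since $\mathcal{D}_\tau\bar X_s(\eta)=G_{\nu(s-\tau)}(\eta,\MTemptyplaceholder)$, the inner integral should read $\int_\Lambda\phi_s(\eta)G_{\nu(s-\tau)}(\eta,\MTemptyplaceholder)\,\D\eta=S_{\nu(s-\tau)}^\ast\phi_s$ (integration is over the \emph{first} kernel variable), which is exactly what you then use; the subsequent contraction bound is unaffected either way.
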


\begin{proof}%[Proof of Lemma \ref{lem:variance_linear}]
	Having the Poincaré inequality at hand, we can use it even for the linear (Gaussian) case, where explicit calculation are possible. We find the bound
		\begin{align*}
			\var\left(\int_0^t\norm*{B\inv \bar{X}_s}^2\,\D s\right)&\le \int_0^t \EV*{\norm*{2\int_\tau^t\int_\Lambda \bar{X}_s(\eta)\sigma(\eta)^{-2}G_{\nu(s-\tau)}(\eta,\MTemptyplaceholder)\,\D \eta\,\D s}_{\mathbb{H}}^2}\,\D\tau\\
			&\le 4t\int_0^t \int_\tau^t \EV*{\norm*{S_{\nu(s-\tau)}[\bar{X}_s/\sigma^2]}_{\mathbb{H}}^2}\,\D s\D\tau\\
			&\le 4t\norm*{B}^2\norm*{B\inv}^2\int_0^t\int_0^t\norm*{B\inv \bar{X}_t}^2\,\D \tau\D s\\
			&\le 4t^2 \norm*{B}^2\norm*{B\inv}^2\EV*{\bar{\mathcal{I}}_{t,\nu}}.\qedhere
		\end{align*}
%%		By Proposition 3.14 of \cite{Hairer2009}, we find that
%%		\begin{align*}
%%			\var\left(\int_0^T\norm*{B\inv \bar{X}_t}^2\,\D t\right)&\lesssim \int_0^T \EV*{\norm*{\bar{X}_t}^2}\,\D t \lesssim \int_0^T \EV*{\norm*{\bar{X}_t}^2}\,\D t\\
%%			&\lesssim \int_0^T \EV*{\norm*{\bar{X}_t}^2}\,\D t,
%%		\end{align*}
%%		and hence the claim follows.
%	\begin{align*}
%		\var\left(\int_0^T\norm*{B\inv X_t}^2\,\D t\right)&\le \EV*{\norm*{\mathcal{D}\mathcal{I}_\nu}_{\mathfrak{H}}^2}\\
%		&= 4\EV*{\norm*{\int_0^T \int_\Lambda \sigma(\eta)^{-2} X_t(\eta) G_{\nu(t-\MTemptyplaceholder)}(\eta,\MTemptyplaceholder))\,\D\eta\D t}_{\mathfrak{H}}^2}\\
%		&\le 4\EV*{\int_0^T(T-\tau) \int_\tau^T \norm*{B S_{t-\tau}^\nu[X_t/\sigma^2]}^2\,\D t\D \tau}\\
%		&\le 4T^2\norm*{B}^2\norm*{B\inv}^2\int_0^T\EV*{\norm*{B\inv X_t}^2}\,\D t\\
%		&=4T^2\norm*{B}^2\norm*{B\inv}^2\EV*{\mathcal{I}_\nu}.
%	\end{align*}
	\end{proof}
		We proceed by proving a path-wise bound on the growth of the non-linear part $(\tilde{X}_t)_{t\in[0,T]}$ with respect to the linear part $(\bar{X}_t)_{t\in[0,T]}$.
		\begin{lemma}\label{lem:controlnon-linear}
			Under Assumptions \ref{assump:general} and \ref{assump:general_F} we have for $t\in[0,T]$
			\begin{equation*}
				\norm{\tilde{X}_t}^2 \le 3 e^{2D\theta t}\left(\norm*{S_{\nu t}X_0}^2 + t(D\theta )^2\left( \int_0^t \norm{\bar{X}_s}^2\,\D s + t\right)\right),
			\end{equation*}
			in particular $\EV{\norm{\tilde{X}_t}^2} \lesssim 1 + \phi(\nu,t)$.
			%\begin{equation*}
			%	\EV*{\norm{\tilde{X}_t}^2} \lesssim \phi(\nu).% 3 e^{2D\theta t}\left(\EV*{\norm*{X_0}^2} + t(D\theta )^2\left( \norm*{B}^2\EV*{\bar{\mathcal{I}}_{t,\nu}} + t\right)\right).
			%\end{equation*}
		\end{lemma}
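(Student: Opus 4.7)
The plan is to start from the variation-of-constants formula (Assumption \ref{assump:general}) written for $\tilde X_t = X_t - \bar X_t$, which yields
\begin{equation*}
\tilde X_t = S_{\nu t}X_0 + \int_0^t S_{\nu(t-s)}\theta F(X_s)\,\D s.
\end{equation*}
Taking $\mathcal{H}$-norms, using the contraction property of $(S_t)_{t\ge 0}$ and the upper bound $\norm{F(z)}\le D(\norm{z}+1)$ from Assumption \ref{assump:general_F}, and splitting $\norm{X_s}\le \norm{\tilde X_s}+\norm{\bar X_s}$, I get the scalar integral inequality
\begin{equation*}
\norm{\tilde X_t} \le \psi(t) + D\theta\int_0^t \norm{\tilde X_s}\,\D s,\quad \psi(t)\coloneqq \norm{S_{\nu t}X_0} + D\theta\int_0^t\bigl(\norm{\bar X_s}+1\bigr)\,\D s.
\end{equation*}

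Next I would invoke Gronwall's inequality. Since $\psi$ is non-decreasing in $t$, the integral Gronwall lemma gives $\norm{\tilde X_t}\le \psi(t)\,e^{D\theta t}$ pathwise. Squaring and applying $(a+b+c)^2\le 3(a^2+b^2+c^2)$ to the three summands of $\psi(t)$, followed by the Cauchy--Schwarz bound $\bigl(\int_0^t\norm{\bar X_s}\,\D s\bigr)^2\le t\int_0^t\norm{\bar X_s}^2\,\D s$, yields
\begin{equation*}
\psi(t)^2\le 3\norm{S_{\nu t}X_0}^2 + 3t(D\theta)^2\int_0^t\norm{\bar X_s}^2\,\D s + 3t^2(D\theta)^2,
\end{equation*}
which assembles exactly to the claimed path-wise estimate.

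For the "in particular'' part, I would take expectations, use $\EV{\norm{S_{\nu t}X_0}^2}\le \EV{\norm{X_0}^2}<\infty$ (which is finite by the standing moment assumption on $X_0$), and apply \eqref{eq:linearpart_secondmoment} together with the compatibility bound \eqref{eq:compatibility} (namely $\phi(\nu,s)\le \phi(\nu,t)$ for $s\le t$) to deduce $\int_0^t\EV{\norm{\bar X_s}^2}\,\D s\lesssim t\phi(\nu,t)\lesssim \phi(\nu,t)$, giving $\EV{\norm{\tilde X_t}^2}\lesssim 1+\phi(\nu,t)$. No step here looks delicate: the only mild subtlety is recognising that $\psi$ is monotone, which makes the Gronwall bound tight enough to absorb the exponential factor into the prefactor $e^{2D\theta t}$ rather than degrading it to $e^{2D\theta T}$ inside the integrand.
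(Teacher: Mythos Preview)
Your argument is essentially identical to the paper's: variation-of-constants for $\tilde X_t$, contraction of $S_t$, the linear growth bound on $F$, Gronwall, then square and apply $(a+b+c)^2\le 3(a^2+b^2+c^2)$ with Cauchy--Schwarz; the ``in particular'' part is likewise handled via $\EV{\norm{S_{\nu t}X_0}^2}\le \EV{\norm{X_0}^2}$ and \eqref{eq:linearpart_secondmoment}.

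One small caveat: your claim that $\psi$ is non-decreasing is not quite right, since $t\mapsto\norm{S_{\nu t}X_0}$ is non-increasing by the contraction property. The paper glosses over the same point (it simply writes ``Gronwall's inequality yields''). The clean fix is to bound $\norm{S_{\nu s}X_0}\le\norm{X_0}$ before applying Gronwall, which gives the stated inequality with $\norm{X_0}^2$ in place of $\norm{S_{\nu t}X_0}^2$; this makes no difference for the ``in particular'' conclusion or for any later use of the lemma.
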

		\begin{proof}
			Note that we have
			\begin{align*}
				\norm{\tilde{X}_t}&=\norm*{S_{\nu t}X_0 + \theta\int_0^t S_{\nu(t-s)} F(\tilde{X}_s+\bar{X}_s)\,\D s}\\
				&\le \norm*{S_{\nu t}X_0} + \theta \int_0^t \norm*{F(\tilde{X}_s+\tilde{X}_s)}\,\D s\\
				&\le \norm*{S_{\nu t}X_0} + D\theta \left(\int_0^t \norm*{\bar{X}_s}\,\D s +  t + \int_0^t\norm{\tilde{X}_s}\,\D s\right).
			\end{align*}
			Gronwall's inequality yields
			\begin{equation*}
				\norm{\tilde{X}_t}\le e^{D\theta t}\left(\norm*{S_{\nu t}X_0} + D\theta\left(\int_0^t\norm*{\bar{X}_s}\,\D s + t\right)\right)
			\end{equation*}
			and consequently
			\begin{equation*}
				\norm{\tilde{X}_t}^2 \le 3 e^{2D\theta t}\left(\norm*{S_{\nu t}X_0}^2 + (D\theta )^2\left( t\int_0^t \norm*{\bar{X}_s}^2\,\D s + t^2\right)\right).
			\end{equation*}
			The second claim follows from noting that the initial condition is negligible due to $\EV*{\norm*{S_{\nu t}X_0}^2}\le \EV*{\norm*{X_0}^2}<\infty$ for all $\nu,t>0$ and applying \eqref{eq:rate_linear} together with the monotonicity of $t\mapsto\phi(\nu,t)$.
		\end{proof}
		\begin{proof}[Proof of Lemma \ref{lem:non-linearexpectation}]
		We split the proof into two parts:
		\begin{enumerate}[(i)]
			\item For any $t\in [0,T]$ we have the bound $\EV*{\norm{B\inv F(X_t)}^2} \lesssim 1 + \phi(\nu,t)$ and
			\item For any $t\in [0,T]$ we have the bound
			\begin{align*}
				\EV*{\norm{B\inv F(X_t)}^2}\ge \hspace{-8em}&\\
				&\left[\phi(\nu,t)\left(\norm*{B\inv}^{-2}/2 - 3\norm*{B}^2(D\theta )^2e^{2D\theta t}t^2\right) - 3e^{2D\theta t}\left(\EV*{\norm*{X_0}^2}+(D\theta t)^2\right)\right]\vee 0.
			\end{align*}
			\end{enumerate}
	
			First we prove the upper bound (i): Assumption \ref{assump:general_F} and Lemma \ref{lem:controlnon-linear} together with \eqref{eq:linearpart_secondmoment}, \eqref{eq:rate_linear} yield the bound
				\begin{align*}
				\EV*{\norm*{B\inv F(X_t)}^2}&=\EV*{\norm*{B\inv F(\bar{X}_t + \tilde{X}_t)}^2}\\
				&\le (\norm*{B\inv} D)^2\left(\EV{\norm{\bar{X}_t+\tilde{X}_t}^2} + 1\right)\\
				&\le (\norm*{B\inv}D)^2 \left(2\EV{\norm{\bar{X}_t}^2}+2\EV{\norm{\tilde{X}_t}^2} + 1\right)\\
				&\lesssim \phi(\nu,t) +1.
				%&\le 2(\norm*{B\inv}D)^2 \left[\norm{\bar{X}_t}^2 + 3 e^{2D\theta t}\left(\norm*{S_{\nu t}X_0}^2 + t(D\theta )^2\left( \norm*{B}^2\bar{\mathcal{I}}_{t,\nu} + t\right)\right) + 1\right]\\
				%&\lesssim \norm{\bar{X}_t}^2 + \bar{\mathcal{I}}_{t,\nu} + 1.
			\end{align*}		
			We proceed to the lower bound (ii). We use \eqref{eq:linearpart_secondmoment}, Proposition \ref{prop:LowerboundNorm} with $\alpha=1/2$ and Lemma \ref{lem:controlnon-linear} to obtain the bound
			\begin{align*}
				\norm*{B}^2 d^{-2}\EV*{\norm*{B\inv F(X_t)}^2}\hspace{-10em}&\\
				&\ge \EV*{\norm{\bar{X}_t}^2}/2-\EV*{\norm{\tilde{X}_t}^2}\\
				&\ge\EV*{\norm{\bar{X}_t}^2}/2 -  3 e^{2D\theta t}\left(\EV*{\norm*{X_0}^2} + t(D\theta )^2\left(\int_0^t\EV*{\norm{\bar{X}_s}^2}\,\D s + t\right)\right)\\
				&\begin{multlined}[t]\ge \norm*{B\inv}^{-2}\phi(\nu,t)/2-
					3 e^{2D\theta t}\left(\EV*{\norm*{X_0}^2} + t(D\theta )^2\left(\norm*{B}^{2}\int_0^t\phi(\nu,s)\,\D s + t\right)\right)\end{multlined}\\
				&\ge
				\begin{multlined}[t]
					\phi(\nu,t) \left[\norm*{B\inv}^{-2}/2 - 3(D\theta )^2\norm*{B}^{2} e^{2D\theta  t}t^2\right] -
					 3e^{2D\theta  t}\left(\EV*{\norm*{X_0}^2} + (D\theta t)^2\right).
					\end{multlined}
			\end{align*}
			To conclude, we fix  $\tau\in (0,T]$ such that $c\coloneqq \norm*{B\inv}^{-2}/2 - 3\norm*{B}^2(D\theta )^2e^{2D\theta  \tau}\tau^2>0$. Using \eqref{eq:compatibility} and \eqref{eq:rate_linear}, we obtain
			\begin{align*}
				\EV*{\mathcal{I}_\nu}&\ge \int_0^\tau \EV*{\norm*{B\inv F(X_t)}^2}\,\D t\ge c\int_0^\tau \phi(\nu,t)\,\D t - 3e^{2D\theta t}\left(\EV*{\norm*{X_0}^2}+(D\theta t)^2\right)\\
				&\gtrsim \int_0^\tau \phi(\nu,t)\,\D t\ge (\tau/T)^2 \int_0^T \phi(\nu,t)\,\D t\gtrsim \EV*{\bar{\mathcal{I}}_{T,\nu}}.\qedhere
			\end{align*}
\end{proof}
		
\pagebreak
\subsection{Proofs for Subsection \ref{subsec:GeneralRates}}
		
\begin{lemma}\label{lem:inverse_Fisher}
		Grant Assumptions \ref{assump:general}, \ref{assump:generalA} and \ref{assump:general_F}. Then we have
		\begin{equation*}
			\frac{\EV*{\mathcal{I}_{T,\nu}}}{\mathcal{I}_{T,\nu}} = \mathcal{O}_{\prob*{}}(1).
		\end{equation*}
	\end{lemma}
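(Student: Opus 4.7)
The plan is to adapt the Da Prato--Debussche splitting argument from the proof of Lemma \ref{lem:non-linearexpectation} so that it produces a pathwise (not merely in-expectation) lower bound on $\mathcal{I}_\nu$ of the right order $\phi(\nu)$, and then to leverage Lemma \ref{lem:variance_linear} for concentration of the purely linear part around its mean. The key obstacle is that the Malliavin--Poincaré concentration bound used in Lemma \ref{lem:Variance_Bound} is unavailable for $\mathcal{I}_\nu$ itself, since Assumption \ref{assump:specific_F} is not imposed here; the splitting sidesteps this, because the linear part $\bar{X}$ is Gaussian and does not involve $F$.

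First I would combine $\norm*{B\inv z}\ge \norm*{B}^{-1}\norm*{z}$ with Assumption \ref{assump:general_F} to get $\mathcal{I}_\nu\ge (d/\norm*{B})^2 \int_0^T \norm*{X_t}^2\,\D t$, and then, splitting $X=\bar{X}+\tilde{X}$ and using $\norm*{a+b}^2\ge \norm*{a}^2/2-\norm*{b}^2$ together with the pathwise Gronwall estimate of Lemma \ref{lem:controlnon-linear}, I would arrive at
\begin{equation*}
\mathcal{I}_\nu\ge \frac{d^2}{\norm*{B}^2}\Bigl(\bigl(\tfrac{1}{2}-C_2(\tau)\bigr)\int_0^\tau\norm*{\bar{X}_s}^2\,\D s - C_1(\tau)\norm*{X_0}^2 - C_3(\tau)\Bigr)
\end{equation*}
for any $\tau\in(0,T]$, where $C_2(\tau)=3e^{2D\theta\tau}(D\theta)^2\tau^2\to 0$ as $\tau\to 0$. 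Fixing $\tau$ small enough that $c_\tau\coloneqq \tfrac{1}{2}-C_2(\tau)>0$ -- mirroring the choice of $\tau$ in part (ii) of the proof of Lemma \ref{lem:non-linearexpectation} -- retains a positive leading coefficient in front of the linear-part integral.

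Next, setting $\mathcal{J}_{\tau,\nu}\coloneqq \int_0^\tau \norm*{\bar{X}_s}^2\,\D s \ge \norm*{B\inv}^{-2} \bar{\mathcal{I}}_{\tau,\nu}$, Lemma \ref{lem:variance_linear} (which needs only Assumptions \ref{assump:general} and \ref{assump:generalA}), combined with \eqref{eq:rate_linear} and the comparability \eqref{eq:compatibility}, yields $\var(\bar{\mathcal{I}}_{\tau,\nu})\lesssim \EV*{\bar{\mathcal{I}}_{\tau,\nu}}\sim \phi(\nu)\to\infty$, so that Chebyshev gives $\bar{\mathcal{I}}_{\tau,\nu}/\EV*{\bar{\mathcal{I}}_{\tau,\nu}}\xrightarrow{\prob*{}} 1$. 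Therefore $\mathcal{J}_{\tau,\nu}\gtrsim \phi(\nu)$ with probability tending to one. Since $\EV*{\norm*{X_0}^2}<\infty$ and $\phi(\nu)\to\infty$, the terms $C_1(\tau)\norm*{X_0}^2$ and $C_3(\tau)$ are negligible relative to $\phi(\nu)$ in probability. Combined with $\EV*{\mathcal{I}_\nu}\sim \phi(\nu)$ from Lemma \ref{lem:non-linearexpectation}, this yields $\mathcal{I}_\nu\gtrsim \EV*{\mathcal{I}_\nu}$ in probability, equivalently $\EV*{\mathcal{I}_\nu}/\mathcal{I}_\nu=\mathcal{O}_{\prob*{}}(1)$.

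The hard part is the balancing of the splitting: restricting to a short window $[0,\tau]$ is essential to absorb the Gronwall-type exponential growth of the non-linear remainder, and what makes this harmless is the two-sided comparability \eqref{eq:compatibility}, which guarantees $\phi(\nu,\tau)\sim \phi(\nu)$ for any fixed $\tau>0$. This is the same structural device as in part (ii) of the proof of Lemma \ref{lem:non-linearexpectation}; the present statement is essentially its in-probability upgrade, made possible by the linear-part concentration of Lemma \ref{lem:variance_linear}.
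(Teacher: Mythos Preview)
Your proposal is correct and follows essentially the same route as the paper: both use the lower growth bound from Assumption \ref{assump:general_F}, the splitting $X=\bar X+\tilde X$, the pathwise Gronwall control of Lemma \ref{lem:controlnon-linear} on a short window $[0,\tau]$ to keep the leading coefficient positive, and then the concentration of the linear part via Lemma \ref{lem:variance_linear} together with \eqref{eq:compatibility}, \eqref{eq:rate_linear} and Lemma \ref{lem:non-linearexpectation}. The only cosmetic difference is that the paper works directly with $\bar{\mathcal{I}}_{\tau,\nu}=\int_0^\tau\norm*{B\inv\bar X_s}^2\,\D s$ rather than $\int_0^\tau\norm*{\bar X_s}^2\,\D s$, and packages the remainder as $u_1(\tau)$ (whose variance is controlled via the moment assumption on $X_0$) before applying Chebyshev.
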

		\begin{proof}%[Proof of Lemma \ref{lem:inverse_Fisher}]
			 Note that Lemma \ref{lem:controlnon-linear} implies  for every $\tilde{t}\in[0,T]$ ($\omega$-wise)
			\begin{align}
				\int_0^{\tilde{t}} \norm{\tilde{X}_t}^2\,\D t &\le 3 \int_0^{\tilde{t}}e^{2D\theta t}\left(\norm*{S_{\nu t}X_0}^2 + t^2(D\theta )^2\right)\,\D t + 3(\norm*{B}D\theta)^2 \int_0^{\tilde{t}}e^{2D\theta t}t\,\D t\bar{\mathcal{I}}_{\tilde{t},\nu}\nonumber\\
				&=\colon u_1(\tilde{t}) + u_2(\tilde{t})\bar{\mathcal{I}}_{\tilde{t},\nu}.\label{eq:upperboundwide}
			\end{align}
			Now choose  $c\in (0,2^{-1})$ and $\tau\in(0,T]$ such that $u_2(\tau)<2^{-1} - c$, which is always possible by the monotonicity of $u_2$. Then we can use the lower bound for $F$ of Assumption \ref{assump:general_F}, Proposition \ref{prop:LowerboundNorm} with $\alpha=1/2$ and \eqref{eq:upperboundwide} to find
			\begin{align*}
				\mathcal{I}_\nu&\ge \int_0^\tau \norm*{B\inv F(X_t)}^2\,\D t \ge \norm*{B}^{-2}d^2\int_0^\tau \norm{\bar{X}_t+\tilde{X}_t}^2\,\D t\\
				&\ge \norm*{B}^{-2}d^2\left(2^{-1}\bar{\mathcal{I}}_{\tau,\nu} - \int_0^\tau \norm{\tilde{X}_t}^2\,\D t\right)\ge \norm*{B}^{-2}d^2\left(\left(2^{-1}-u_2(\tau)\right)\bar{\mathcal{I}}_{\tau,\nu}-u_1(\tau)\right).
			\end{align*}
			Introducing $Y_{\tau,\nu}\coloneqq \bar{\mathcal{I}}_{\tau,\nu} - u_1(\tau)/(2^{-1}-u_2(\tau))$, this implies that
			\begin{align*}
				\prob*{\mathcal{I}_{\nu}\le \frac{cd^2}{\norm*{B}^2}\EV*{\bar{\mathcal{I}}_{\tau,\nu}}}&\le \prob*{\left(2^{-1}-u_2(\tau)\right)\bar{\mathcal{I}}_{\tau,\nu}-u_1(\tau)\le c\EV*{\bar{\mathcal{I}}_{\tau,\nu}}}\\
				&=\prob*{Y_{\tau,\nu}- \EV*{Y_{\tau, \nu}}\le \frac{(u_2(\tau)+c-2^{-1})\EV*{\bar{\mathcal{I}}_{\tau,\nu}} + \EV*{u_1(\tau)}}{2^{-1}-u_2(\tau)}}.
			\end{align*}
			For $\nu>0$ small enough, the right-hand side is positive by our choice of $\tau$ and $c$, as well as $\EV*{u_1(\tau)}<\infty$. Consequently, we find
			\begin{equation*}
				\prob*{\mathcal{I}_{\nu}\le \frac{cd^2}{\norm*{B}^2}\EV*{\bar{\mathcal{I}}_{\tau,\nu}}}\le\prob*{\abs*{Y_{\tau,\nu} - \EV*{Y_{\tau,\nu}}}\ge \frac{(u_2(\tau)+c-2^{-1})\EV*{\bar{\mathcal{I}}_{\tau,\nu}} + \EV*{u_1(\tau)}}{-2^{-1}+u_2(\tau)}}.
			\end{equation*}
			Chebyshev's inequality yields
			\begin{equation*}
				\prob*{\mathcal{I}_{\nu}\le \frac{cd^2}{\norm*{B}^2}\EV*{\bar{\mathcal{I}}_{\tau,\nu}}}\le \var\left(Y_{\tau,\nu}\right)\left(\frac{(u_2(\tau)+c-2^{-1})\EV*{\bar{\mathcal{I}}_{\tau,\nu}} + \EV*{u_1(\tau)}}{-2^{-1}+u_2(\tau)}\right)^{-2}.
			\end{equation*}
			By Lemma \ref{lem:variance_linear} and $\var(u_1(\tau)/(2^{-1}-u_2(\tau)))<\infty$, the right-hand side vanishes as $\nu\to 0$. Combining \eqref{eq:rate_linear}, \eqref{eq:compatibility} and Lemma \ref{lem:non-linearexpectation} yields
			\begin{equation*}
				\EV*{\bar{\mathcal{I}}_{\tau,\nu}}\gtrsim \int_0^\tau \phi(\nu,s)\,\D s\gtrsim \int_0^T \phi(\nu,s)\,\D s\gtrsim \EV*{\bar{\mathcal{I}}_{T,\nu}}\gtrsim \EV*{\mathcal{I}_\nu}.
			\end{equation*}
			Hence, for $\epsilon=\liminf_{\nu\to 0}cd^2\norm{B}^{-2}\EV*{\bar{\mathcal{I}}_{\tau,\nu}}/\EV*{\mathcal{I}_\nu}>0$ we have proved the convergence $\prob{\mathcal{I}_\nu\EV*{\mathcal{I}_\nu}^{-1}\le \epsilon}\to 0$ as $\nu\to0$.
\end{proof}

\begin{proof}[Proof of Proposition \ref{thm:non-linearrate}]
Consider the decomposition \eqref{eq:decomthetanu}.
Lemma \ref{lem:non-linearexpectation} combined with \eqref{eq:rate_linear} proves $\EV*{\mathcal{I}_{\nu}}^{1/2} \sim \phi(\nu)^{1/2}$. Furthermore, we have $\mathcal{M}_{\nu}/\EV*{\mathcal{I}_{\nu}}^{1/2}=\mathcal{O}_{\prob*{}}(1)$ by the definition of the quadratic variation. Finally, $\frac{\EV{\mathcal{I}_{\nu}}}{\mathcal{I}_{\nu}} = \mathcal{O}_{\prob*{}}(1)$ follows by Lemma \ref{lem:inverse_Fisher}, which completes the proof.
\end{proof}
	
	\begin{proof}[Proof of Proposition \ref{lem:general_rate}]
		We proceed as in the proof of Proposition \ref{thm:non-linearrate}. Note that $(S_t)_{t\ge 0}$ and $B$ commute and therefore
		\begin{equation*}
			\EV*{\norm*{\bar{X}_t}_B^2}=\EV*{\norm*{B\inv \bar{X}_t}^2}= \int_0^t \norm*{B\inv S_{\nu(t-s)} B}_{\operatorname{HS}}^2\,\D s=\phi(\nu,t).
		\end{equation*}
		For the non-linear part we find similarly to Lemma \ref{lem:controlnon-linear} that
		\begin{equation*}
			\norm{\tilde{X}_t}_B \le \norm*{X_0}_B + D\theta\left(\int_0^t\norm{\bar{X}_s}_B\,\D s+t+\int_0^t\norm{\bar{X}_s}_B\,\D s\right).
		\end{equation*}
		Using Gronwall's inequality, we arrive at
		\begin{equation*}
			\norm{\tilde{X}_t}_B^2 \le 3 e^{2D\theta t}\left(\norm*{X_0}_B^2 + t(D\theta )^2\left( \int_0^t \norm{\bar{X}_s}_B^2\,\D s + t\right)\right).
		\end{equation*}
Working with $\norm*{}_B$ instead of $\norm*{}$ in Lemma \ref{lem:non-linearexpectation}, we deduce $\EV*{\mathcal{I}_\nu}\sim\phi(\nu)$.
		
		We are left with extending Lemma \ref{lem:variance_linear} and thereby Lemma \ref{lem:inverse_Fisher}. To this end set $\bar{X}_{k,t}\coloneqq \iprod*{B\inv \bar{X}_t}{u_k}$ and use Wick's formula to find
		\begin{align*}
			\var\left(\bar{\mathcal{I}}_{T,\nu}\right)&=\int_0^T\int_0^T \cov\left(\norm*{B\inv \bar{X}_t}^2,\norm*{B\inv \bar{X}_s}^2\right)\,\D s\D t\\
			&=2\int_0^T\int_0^t \sum_{k\in\mathbb{N}}\cov\left(\bar{X}_{k,t}^2,\bar{X}_{k,s}^2\right)\,\D s\D t\\
			&=4\int_0^T\int_0^t \sum_{k\in\mathbb{N}}\cov\left(\bar{X}_{k,t},\bar{X}_{k,s}\right)^2\,\D s\D t\\
			&=4\int_0^T\int_0^t\sum_{k\in\mathbb{N}}\left(\int_0^s \iprod*{B\inv S_{\nu(t-z)}Be_k}{B\inv S_{\nu(s-z)}Be_k}\,\D z\right)^2\,\D s\D t\\
			&\le 4\int_0^T\int_0^ts\int_0^s \sum_{k\in\mathbb{N}}\norm*{S_{\nu(s-z)}e_k}^2\,\D z\D s\D t\\
			&\lesssim \EV*{\bar{\mathcal{I}}_{T,\nu}}.\qedhere
		\end{align*}
	\end{proof}

	\section{Proofs for Section \ref{sec:discretize}}\label{sec:Obs_schemes_proofs}
		
	\subsection{\textbf{Localized Perspective}}\label{subsec:Localized_Proof}
	
	\begin{proof}[Proof of Lemma \ref{lem:Sclaing_Linear_Part}]
		Fix some $t\in (t_0-\delta_t,t_0+\delta_t)$. By condition \eqref{eq:assump_nonpara_a}, we find
		\begin{align*}
		\int_{\Lambda_{\delta_y}(y_0)}\EV*{\bar{X}_t(y)^2}\,\D y&= \int_{\Lambda_{\delta_y}(y_0)}\int_0^t\norm*{BG_{\nu(t-s)}(y,\MTemptyplaceholder)}^2\,\D s\D y\\
		%&= \sigma_0^2\int_0^t\int_{\Lambda_{\delta_y}(y_0)}\norm*{G_{\nu(t-s)}(y,\MTemptyplaceholder)}^2\,\D y\D s\\
		&\sim_{c_3\norm*{B\inv}^{-2},c_4\norm*{B}^2}\delta_y^d \phi(\nu,t),
	\end{align*}
	where $\sim_{c_3\norm*{B\inv}^{-2},c_4\norm*{B}^2}$ means $\ge c_3\norm*{B\inv}^{-2} \cdot$ and $\le c_4\norm*{B}^2\cdot$.
	\end{proof}
			
	\begin{proof}[Proof of Lemma \ref{lem:Assumptions_hold_a}] Fix some $t\in (t_0-\delta_t,t_0+\delta_t)$.
	
	\textbf{Step 1:} We start by treating the linear part $\bar{X}$. Using (i), we find for any $y\in\Lambda_{\delta_y}(y_0)$
	\begin{align*}
	\EV*{\bar{X}_t(y)^2} &= \EV*{\Big(\int_0^t \int_\Lambda G_{\nu(t-s)}(y,\eta)\mathcal{W}(\D s,\D\eta)\Big)^2}\nonumber\\
	&= \int_0^t \norm*{BG_{\nu(t-s)}(y,\MTemptyplaceholder)}^2\,\D s \le c_5\norm*{B}^2\int_0^t \norm*{S_{\nu(t-s)}}_{\operatorname{HS}}^2\,\D s\nonumber\\
	&= c_5\norm*{B}^2\phi(\nu,t).
	\end{align*}
	In particular, $\norm{\EV{\bar{X}_t^2}}_\infty\le c_5\norm*{B}^2\phi(\nu,t)$.
	
	\textbf{Step 2:} We treat the non-linear part $\tilde{X}$. Fix any $y\in \Lambda_{\delta_y}(y_0)$ and note that for any $z\in C(\Lambda)$, using non-negativity of $G_t$,
	\begin{align}
		[S_t z](y)^2 &=\left(\int_\Lambda G_t(y,\eta)z(\eta)\,\D \eta\right)^2\le \int_\Lambda G_t(y,\eta)z(\eta)^2\,\D \eta \int_\Lambda G_t(y,\eta)\,\D\eta\nonumber\\
		&=\colon c_8[S_t z^2](y).\label{eq:Riesz_Application}
	\end{align}
	Note that $c_8<\infty$, as (i) and (ii) imply that $G_t(y,\eta)\D\eta$ is a finite measure for every $y\in\Lambda$ due to the Riesz-Markov Theorem (Theorem 2.4 of \cite{Rudin1987}). Combining this with (iv), we arrive at the bound
	\begin{align*}
		\norm{\EV{\tilde{X}_t^2}}_\infty &\le 2\norm*{\EV*{X_0^2}}_\infty + 2t\int_0^t \norm*{\EV*{[S_{\nu(t-s)}\theta F(X_s)]^2}}_\infty\,\D s\\
		&\le 2c_9 + 2tc_8\int_0^t \norm*{S_{\nu(t-s)}\EV*{\theta F(X_s)^2}}_\infty\,\D s\\
		&\le 2c_9 + 4tc_6c_8(\theta D)^2\int_0^t \norm*{\EV{\bar{X}_s^2} + \EV{\tilde{X}_s^2}}_\infty\,\D s + 4t^2c_6c_8(\theta D)^2\\
		&\le 2c_9 + 4tc_6c_8(\theta D)^2\int_0^t \left(c_5\norm*{B}^2\phi(\nu,s) + \norm{\EV{\tilde{X}_s^2}}_\infty\right)\,\D s + 4t^2c_6c_8(\theta D)^2\\
		&\le 2c_9 + 4t^2c_6c_8(\theta D)^2\left(c_5\norm*{B}^2\phi(\nu,t)  + 1\right) + 4Tc_6c_8(\theta D)^2\int_0^t \norm{\EV{\tilde{X}_s^2}}_\infty\,\D s
	\end{align*}
	Gronwall's inequality yields the bound
	\begin{equation*}
		\norm{\EV{\tilde{X}_t^2}}\le e^{4Ttc_6c_8(\theta D)^2}\left(2c_9 + 4t^2c_5c_6c_8 (\norm*{B}\theta D)^2(\phi(\nu,t) + 1)\right).
\end{equation*}
	\textbf{Step 3:}
	Combining the previous estimate with Lemma \ref{lem:Sclaing_Linear_Part}, we find the upper bound
	\begin{align*}
		\int_{\Lambda_{\delta_y}(y_0)} \EV*{X_t(y)^2}\,\D y \hspace{-8em}&\\
		&\le 2\int_{\Lambda_{\delta_y}(y_0)} \EV*{\bar{X}_t(y)^2}\,\D y + 2\int_{\Lambda_{\delta_y}(y_0)} \EV*{\tilde{X}_t(y)^2}\,\D y\\
		&\le 2\delta_y^d \abs*{\Lambda}c_5\norm*{B}^2 \phi(\nu,t) + 2\abs*{\Lambda}\delta_y^d e^{4Ttc_6c_8(\theta D)^2}\left(2c_9 + 4t^2c_5c_6c_8 (\norm*{B}\theta D)^2(\phi(\nu,t) + 1)\right)\\
		&\le
		\begin{multlined}[t]
			2\delta_y^d \phi(\nu,t)\norm*{B}^2\abs*{\Lambda}c_5\left(1 + 4e^{4T tc_6c_8 (\theta D)^2}t^2 c_6c_8(\theta D)^2\right) +\\
			 2\abs*{\Lambda}\delta_y^d e^{4Ttc_6c_8(\theta D)^2}\left(2c_9 + 4t^2c_5c_6c_8 (\norm*{B}\theta D)^2\right).
			\end{multlined}
 	\end{align*}
	Similarly, the lower bound
		\begin{align}
		\int_{\Lambda_{\delta_y}(y_0)} \EV*{X_t(y)^2}\,\D y \hspace{-8em}&\nonumber\\
		&\ge 2^{-1}\int_{\Lambda_{\delta_y}(y_0)} \EV*{\bar{X}_t(y)^2}\,\D y -\int_{\Lambda_{\delta_y}(y_0)} \EV*{\tilde{X}_t(y)^2}\,\D y\nonumber\\
		&\ge c_3 \norm*{B}^2 \delta_y^d \phi(\nu,t)/2 - e^{4Ttc_6c_8(\theta D)^2}\left(2c_9 + 4t^2c_5c_6c_8 (\norm*{B}\theta D)^2(\phi(\nu,t) + 1)\right)\nonumber\\
		\begin{split}&\ge
		\begin{multlined}[t]
			\delta_y^d \phi(\nu,t)\norm*{B}^2\left(c_3/2 - 4\abs*{\Lambda}e^{4Ttc_6c_8(\theta D)^2}c_5c_6c_8(\theta D)^2t^2\right)-\\
			\abs*{\Lambda}\delta_y^d e^{4Ttc_6c_8(\theta D)^2}\left(2c_9 + 4t^2c_5c_6c_8 (\norm*{B}\theta D)^2\right)
			\end{multlined}
			\end{split}\label{eq:largeenough}
	\end{align}
	follows. By our choice of $t_0$ in \eqref{item:assump_nonpara_t}, we see that  Assumption \ref{assump:Non_para_too_strong} holds.
	\end{proof}
	\begin{proof}[Proof of Lemma \ref{lem:Assumptions_hold_b}]
The main ingredient is that the identity $F(-X_s)=-F(X_s)$ in the variation-of-constants formula implies that $-X_t$ is a solution to the same SPDE with cylindrical Brownian motion $(-W_t)$ and initial condition $-X_0=0$. By weak uniqueness, we have $-X_t\stackrel{d}{=}X_t$ and thus $\EV{X_t(y)}=\EV{\tilde X_t(y)}=0$.

Fix some $t\in (t_0-\delta_t,t_0+\delta_t)$. By the Poincaré inequality and Lemma \ref{lem:boundedness_convergence} we therefore find for any $t\in[0,T]$ and $y\in\Lambda$ the bound
	\begin{align*}
		\EV*{\tilde{X}_t(y)^2}&\le \EV*{\norm*{\mathcal{D}\tilde{X}_t(y)}_{\mathfrak{H}}^2}\\
		&=\int_0^T\EV*{ \norm*{\int_0^t \int_\Lambda G_{\nu(t-s)}(y,\eta)f'(X_s(\eta))\mathcal{D}_\tau X_s(\eta)\,\D\eta\D s}_{\mathbb{H}}^2}\D \tau\\
		&\le \int_0^t t\int_0^t \EV*{\norm*{\int_\Lambda G_{\nu(t-s)}(y,\eta)f'(X_s(\eta))\mathcal{D}_\tau X_s(\eta)\,\D\eta}_{\mathbb{H}}^2}\,\D s\D \tau\\
		&\le \norm*{B}^2e^{2\norm*{f'}_\infty t}\int_0^t t\int_0^t\EV*{\norm*{G_{\nu(t-s)}(y,\MTemptyplaceholder) f'(X_s(\MTemptyplaceholder))}^2}\,\D s\D \tau\\
		&\le \norm*{B}^2e^{2\norm*{f'}_\infty t}\norm*{f'}_\infty^2 t^2 \int_0^t \norm*{G_{\nu(t-s)}(y,\MTemptyplaceholder)}^2\,\D s.
	\end{align*}
Condition \eqref{eq:assump_nonpara_a} and Lemma \ref{lem:Sclaing_Linear_Part} yield the upper bound
	\begin{align*}
		\int_{\Lambda_{\delta_y}(y_0)} \EV*{X_t(y)^2}\,\D y \hspace{-5em}&\\
		&\le 2\int_{\Lambda_{\delta_y}(y_0)} \EV*{\bar{X}_t(y)^2}\,\D y + 2\int_{\Lambda_{\delta_y}(y_0)} \EV*{\tilde{X}_t(y)^2}\,\D y\\
		&\le 2\norm*{B}^2\Big(c_4\delta_y^d \phi(\nu,t) + e^{2\norm*{f'}_\infty t}\norm*{f'}_\infty^2 t^2 \int_0^t \int_{\Lambda_{\delta_y}(y_0)}\norm*{G_{\nu(t-s)}(y,\MTemptyplaceholder)}^2\,\D y\D s\Big)\\
			&\le 2\delta_y^d \phi(\nu,t)c_4\norm*{B}^2\left(1 +e^{2\norm*{f'}_\infty t}\norm*{f'}_\infty^2t^2\right).
	\end{align*}
	Similarly, we find the lower bound
		\begin{align*}
		\int_{\Lambda_{\delta_y}(y_0)} \EV*{X_t(y)^2}\,\D y \hspace{-7em}&\\
		&\ge 2^{-1}\int_{\Lambda_{\delta_y}(y_0)} \EV*{\bar{X}_t(y)^2}\,\D y -\int_{\Lambda_{\delta_y}(y_0)} \EV*{\tilde{X}_t(y)^2}\,\D y\\
		&\ge
			\norm*{B\inv}^{-2}\Big(c_3 \delta_y^d \phi(\nu,t)/2 - e^{2\norm*{f'}_\infty t}\norm*{f'}_\infty^2 t^2 \int_0^t \int_{\Lambda_{\delta_y}(y_0)}\norm*{G_{\nu(t-s)}(y,\MTemptyplaceholder)}^2\,\D y\D s\Big)\\
			&\ge \delta_y^d \phi(\nu,t)\norm*{B\inv}^{-2}\left(c_3/2 - c_4 e^{2\norm*{f'}_\infty t}\norm*{f'}_\infty^2 t^2\right).
	\end{align*}
	Assumption \ref{assump:Non_para_too_strong} holds by our choice of $t_0$ in \eqref{item:assump_nonpara_t_2}.
	\end{proof}
	\begin{proof}[Proof of  Lemma \ref{lem:Assumptions_hold_Example}]
	We show that Condition \eqref{eq:assump_nonpara_a} and the conditions (i) - (iv) of Lemma \ref{lem:Assumptions_hold_a}  and (i) - (iii) of Lemma \ref{lem:Assumptions_hold_b} hold for the fractional heat equation \eqref{eq:Guiding_Example}.
	
	\textbf{Step 1:} We start by showing Condition \eqref{eq:assump_nonpara_a}. Fix some $t\in (t_0-\delta_t,t_0+\delta_t)$.
	
	From the representation \eqref{eq:fractional_kernel} of $G_t$ and the uniform boundedness of the eigenfunctions $(e_k)_{k\in\mathbb{N}}$ in supremum norm, we get the upper bound
	\begin{equation*}
		\int_{\Lambda_{\delta_y}(y_0)}\norm*{G_t(y_0+\delta_y y,\MTemptyplaceholder)}^2\,\D y \le \sum_{k\in\mathbb{N}}e^{-2\lambda_k^{\alpha/2}t}\norm*{e_k}_\infty^2 \abs*{\Lambda_{\delta_y}(y_0)}\le 2\abs*{\Lambda}\delta_y^d\norm*{S_t}_{\operatorname{HS}}^2/l
	\end{equation*}
	and hence $c_4=2$.
	
	In order to prove the lower bound, recall that $e_k(y) = \sqrt{2/l}\sin(k\pi y/l)$ and $\Lambda_{\delta_y}(y_0) = ((1-\delta_y)y_0,(1-\delta_y)y_0 + \delta_y l]$. Therefore,
	\begin{equation*}
		\int_{\Lambda_{\delta_y}(y_0)}e_k(y)^2\,\D y = \delta_y+\frac{\sin(2\pi k((1-\delta_y)y_0 + \delta_y l)/l)-\sin(2\pi k(1-\delta_y)y_0/l)}{2\pi k}
	\end{equation*}
	and for high frequencies $k>1/(\pi \delta_y(1-\zeta))=\colon k_{\delta_y}$ we can lower bound the integral by $\delta_y\zeta$ for any fixed $\zeta\in (0,1)$.
	
	In order to treat the low frequencies $k\le k_{\delta_y}$, we take for the moment the following claim for granted:
	
	\textbf{Claim:} There exists a $\xi>0$ such that at least for every second $k$ we have $\sin(\pi k y_0/l)^2>\xi$.
	
Without loss of generality we choose the odd numbers $k\in 2\mathbb{N}-1$. By choosing bounded frequencies, we can find a non-empty neighbourhood $U\subset \Lambda$ of $y_0$ such that $\sin(\pi k(y_0+\delta_y(y-y_0))/l)>\xi/2$ on $U$ for all $\delta_y>0$ and all $k\le k_{\delta_y}= 1/(\pi \delta_y(1-\zeta))$, $k\in 2\mathbb{N}-1$. Hence,
	\begin{equation*}
		\int_\Lambda e_k(y_0+\delta_y(y-y_0)))^2\,\D y\ge \abs*{U} \xi/2
	\end{equation*}
	for at least every second $k\le k_{\delta_y}= 1/(\pi\delta_y(1-\zeta))$.
	Together, we find
	\begin{align*}
		\int_\Lambda\norm*{G_t(y_0+\delta_y(y-y_0),\MTemptyplaceholder)}^2\,\D y &=\sum_{k\in\mathbb{N}}e^{-2\lambda_k^{\alpha/2}t}\int_\Lambda e_k(y_0+\delta_y(y-y_0))^2\,\D y\\
		&\ge \sum_{k>k_{\delta_y}}e^{-2\lambda_k^{\alpha/2}t}\zeta + \sum_{k\le k_{\delta_y},k\in 2\mathbb{N}-1}e^{-2\lambda_k^{\alpha/2}t}\abs*{U}\xi/2\\
		&\ge \sum_{k>k_{\delta_y}}e^{-2\lambda_k^{\alpha/2}t}\zeta + \frac{1}{2}\sum_{k\le k_{\delta_y}}e^{-2\lambda_k^{\alpha/2}t}\abs*{U}\xi/2\\
		&\ge \min(\zeta , \abs*{U}\xi/4)\norm*{S_t}_{\operatorname{HS}}^2,
	\end{align*}
	such that
	\begin{equation}
		c_3=\min(\zeta, \abs*{U}\xi/4).\label{eq:c_3}
	\end{equation}	
	\textbf{Proof of the Claim:} Fix some $0<\xi_1< \sin(\pi y_0/l)^2/2$ (note that $\sin(\pi y_0/l)^2>0$ as $y_0<l$) and assume that for some $k\in\mathbb{N}$ we have $e_k(y_0)^2<\xi_1$. Then, by Proposition \ref{prop:LowerboundNorm},
	\begin{align*}
		l/2\abs*{e_{k+1}(y_0)}^2&=\abs*{\sin((k+1)\pi y_0/l)}^2\\
		&=\abs*{\cos(k\pi y_0/l)\sin(\pi y_0/l) + \sin(k\pi y_0/l)\cos(\pi y_0/l)}^2\\
		&\ge \cos(k\pi y_0/l)^2\sin(\pi y_0/l)^2/2 - \sin(k\pi y_0/l)^2\cos(\pi y_0/l)^2\\
		&=\sin(\pi y_0/l)^2\left[(1-e_k(y_0)^2)/2 + e_k(y_0)^2\right]-e_k(y_0)^2\\
		&\ge \frac{1}{2}\sin(\pi y_0/l)^2-\xi_1=\colon \xi_2.
	\end{align*}
	Hence, at least for every second $k\in\mathbb{N}$ we have $e_k(y_0)^2\ge \max(\xi_1,2\xi_2/l)>0$.
%	 Hence, w.l.o.g.\ assuming that $\abs*{e_2(y_0)}\ge\zeta$, we compute
%	\begin{align*}
%		\norm*{G_t(y_0,\MTemptyplaceholder)}^2&=\sum_{k\in\mathbb{N}}e^{-2\lambda_k^{\alpha/2} t}e_k(y_0)^2\ge \zeta^2 \sum_{k\in2\mathbb{N}}e^{-2 k^\alpha\pi^2 t/l^2}\\
%		&\sim \sum_{k\in\mathbb{N}}e^{-2 k^2\pi^2 t/l^2}=\norm*{S_t}_{\operatorname{HS}}^2.
%	\end{align*}

\textbf{Step 2:} We show that (i) - (iv) of Lemma \ref{lem:Assumptions_hold_a} hold. Note that $(-\Updelta)^{\alpha/2}$, $\alpha\in (1,2]$ is the generator of a (subordinated) stopped Brownian motion $(Z_t^\alpha)_{t\ge 0}$ \cite{Lischke2020}. In particular, positivity of the Green function holds. Condition (i) follows from \eqref{eq:frac_Laplace_kernelnorm} with $c_5=2/l$. For (ii), note that for any $t\ge 0$, $z\in L^\infty(\Lambda)$, we can bound
	\begin{equation*}
		\norm*{S_tz}_\infty = \sup_{x\in\Lambda}\abs*{\EV*{z(Z_t^\alpha + x)}}\le \norm*{z}_\infty
	\end{equation*}
	and hence (ii) of Lemma \ref{lem:Assumptions_hold_a} holds with $c_6=1$. (iii) is satisfied by the choice of $t_0$ and (iv) by $X_0=0$ with $c_9=0$.
	
\textbf{Step 3:} (i) - (iii) of Lemma \ref{lem:Assumptions_hold_a} hold by the choice of $t_0$, the anti-symmetry assumption on $f$ and $X_0=0$.
\end{proof}

%We are now in the position to prove Lemma \ref{lem:CLT_Localized}.
	\begin{proof}[Proof of Proposition \ref{prop:CLT_Localized}]
		Using the Poincaré inequality and Lemma \ref{lem:boundedness_convergence}, we find
	\begin{align*}
		\var\left(\mathcal{I}_{\nu,\delta_t,\delta_y}\right)\hspace{-4em}&\\
		&\le \EV*{\norm*{\mathcal{D}\mathcal{I}_{\nu,\delta_t,\delta_y}}_{\mathbb{H}}^2}\\
		&= 4\EV*{\int_0^T\norm*{\int_{t_0-\delta_t}^{t_0+\delta_t}\int_{\Lambda_{\delta_y}(y_0)} \frac{f(X_s(\eta))f'(X_s(\eta))}{\sigma(\eta)^2}\mathcal{D}_{\tau}X_s(\eta)\,\D\eta \D s}_{\mathbb{H}}^2\D \tau}\\
		 &\le 4\mathrm{E}\left[\int_0^{t_0+\delta_t}2\delta_t\int_{t_0-\delta_t}^{t_0+\delta_t}\norm*{\int_{\Lambda_{\delta_y}(y_0)} \frac{f(X_s(\eta))f'(X_s(\eta))}{\sigma(\eta)^2}\mathcal{D}_{\tau}X_s(\eta)\,\D\eta}_{\mathbb{H}}^2\,\D s\D \tau\right]\\
		 &\le8\norm*{B}^2e^{2\norm*{f'}_\infty T}\mathrm{E}\left[\int_0^{t_0+\delta_t} \delta_t \int_{t_0-\delta_t}^{t_0+\delta_t}\int_{\Lambda_{\delta_y}(y_0)}\frac{f(X_s(y))^2f'(X_s(y))^2}{\sigma(y)^4}\,\D y\D s\,\D \tau\right]\\
		 &\le 8\norm*{B}^2e^{2\norm*{f'}_\infty (t+\delta_t)}\norm*{B\inv}^2\norm*{f'}_\infty^2 (t_0+\delta_t)\delta_t \EV*{\mathcal{I}_{\nu,\delta_t,\delta_y}}\\
		 &\lesssim\EV*{\mathcal{I}_{\nu,\delta_t,\delta_y}}.
	\end{align*}
	This proves the first part of the proposition.
	%Recall that Assumption \ref{assump:Non_para_too_strong} yields
	%\begin{equation*}
	%	c_1(t)\delta_y^d\phi(\nu,t)\le \EV*{\norm*{X_t}_{\delta_y}^2}\le c_2(t) \delta_y^d \phi(\nu,t),\quad t\in (t_0-\delta_t,t_0+\delta_t).
	%\end{equation*}
	 Assumption \ref{assump:Non_para_too_strong} implies the bounds
	\begin{equation*}
		c_1\delta_y^d\delta_t \phi(\nu,t_0-\delta_t)\lesssim \int_{t_0-\delta_t}^{t_0+\delta_t}\EV*{\norm*{X_t}_{\delta_y}^2}\,\D t \lesssim c_2\delta_y^d \delta_t\phi(\nu,t_0+\delta_t).
	\end{equation*}
	Since $c_1>0$ and using the assumptions on $B$ and $F$ from Assumption \ref{assump:general_F}, we obtain
	\begin{equation*}
		\EV*{\mathcal{I}_{\nu,\delta_t,\delta_y}}=\int_{t_0-\delta_t}^{t_0+\delta_t}\EV*{\norm*{B\inv F(X_t)}_{\delta_y}^2}\,\D t \sim \delta_y^d\delta_t\phi(\nu).\qedhere
	\end{equation*}
%	\begin{equation*}
%		\EV*{\mathcal{I}_{\nu,\delta_t,\delta_y}}=\int_0^T \int_{\Lambda_{\delta_y}(y_0)} \EV*{[B\inv F(X_t)](y)^2}\,\D y\D t \sim \epsilon^d\phi(\nu).
%	\end{equation*}
	\end{proof}
	
	\subsection{\textbf{Non-parametric estimation}}\label{subsec:Proof_Nonparametric}
	
	\begin{proof}[Proof of Theorem \ref{thm:nonparaCLT}]
	By Proposition \ref{prop:CLT_Localized}, we only need to treat the approximation error:
	\begin{align*}
		\left|\frac{\int_{t_0-\delta_t}^{t_0+\delta_t}\int_{\Lambda_{\delta_y}(y_0)}[B\inv F(X_t)](y)^2\theta(y,t)\,\D y\D t}{\int_{t_0-\delta_t}^{t_0+\delta_t}\int_{\Lambda_{\delta_y}(y_0)}[B\inv F(X_t)](y)^2\,\D y\D t} - \theta(y_0,t_0)\right| \hspace{-10em}&\\
		&\le\frac{\int_{t_0-\delta_t}^{t_0+\delta_t}\int_{\Lambda_{\delta_y}(y_0)}[B\inv F(X_t)](y)^2|\theta(y,t)-\theta(y_0,t_0)|\,\D y\D t}{\int_{t_0-\delta_t}^{t_0+\delta_t}\int_{\Lambda_{\delta_y}(y_0)}[B\inv F(X_t)](y)^2\,\D y\D t}\\
		&\le (2\delta_t)^{\eta_t}+\operatorname{diam}(\Lambda_{\delta_y}(y_0))^{\eta_y}
		\lesssim \delta_t^{\eta_t} + \delta_y^{\eta_y}
	\end{align*}
	for the diameter $\operatorname{diam}(\Lambda_{\delta_y}(y_0))$ of $\Lambda_{\delta_y}(y_0)$.
%	\begin{align*}
%		\int_t^{t+\delta_t}\int_{B_\epsilon(y_0)}\EV*{\bar{X}_s(y)^2}\,\D y\D s &= \int_t^{t+\delta_t}\int_{B_\epsilon(y_0)}\EV*{\left(S_{\nu(s-t)}X_t + \int_t^s S_{\nu(s-\tau)}\,\D W_\tau\right)(y)^2}\,\D y\D s\\
%		&\ge \int_t^{t+\delta_t}\int_{B_\epsilon(y_0)}\left[2^{-1}\EV*{S_{\nu(s-t)}X_t(y)^2} - \int_t^s \norm*{G_{\nu(s-\tau)}(y,\MTemptyplaceholder)}^2\,\D \tau \right]\,\D y \D s
%	\end{align*}
%	\begin{align*}
%		\int_{B_\epsilon(y_0)} \int_t^s \norm*{G_{\nu(s-\tau)}(y,\MTemptyplaceholder)}^2\,\D\tau\D y\le c_1 \epsilon^d \int_t^s \norm*{S_{\nu(s-\tau)}}_{\operatorname{HS}}^2\,\D\tau = c_1\epsilon^d\left[\phi(\nu,s)-\phi(\nu,t)\right]\le c_1 \epsilon^d s/t \phi(\nu,t).
%	\end{align*}
%	\begin{align*}
%		\int_{B_\epsilon(y_0)}\EV*{S_{\nu(s-t)}X_t(y)^2}\ge \textcolor{red}{c_2(\delta_t)}\int_{B_\epsilon(y_0)}\EV*{X_t(y)^2}\,\D y\ge c_2(\delta_t)c_3 \phi(\nu,t)
%	\end{align*}\todo{only for $t\le\tau$!}
	\end{proof}

	\begin{proof}[Proof of Corollary \ref{cor:nonparaOptimalChoice}]
		Theorem \ref{thm:nonparaCLT} shows
		\begin{equation*}
			\hat{\theta}_{\nu,\delta_t,\delta_y}^{\operatorname{loc}}-\theta(y_0,t_0) = \mathcal{O}_{\prob*{}}\left(\delta_t^{\eta_t}+\delta_y^{\eta_y} + \delta_y^{-d/2}\delta_t^{-1/2}\phi(\nu)^{-1/2}\right).
		\end{equation*}
		Optimizing $\delta_y$ and $\delta_t$, as stated, yields the claim.
	\end{proof}
	
			\subsection{\textbf{Regularity results}}
	
	The quantification of the approximation $\hat{\theta}_{\nu,\delta_t ,\delta_y}^{\operatorname{disc}}$ to $\hat{\theta}_\nu$ depends on the spatial regularity of $X_t$ as $\nu\to 0$. We will make use of the factorization method of \cite{DaPrato2014}. For the spatial discretization we will further embed the interpolation spaces into Sobolev and subsequently into Hölder spaces. Recall	$\bar{X}_t= \int_0^t S_{\nu(t-s)}B\,\D W_s$.

\begin{lemma}\label{lem:spatial_reg_linear}
			Grant Assumption \ref{assump:discretize} with $0\le \gamma<(1-1/\beta)/2-1/p$. Then $\bar{X}_{\MTemptyplaceholder}\in C([0,T],\mathcal{B}_{\gamma,p})$ with
			\begin{equation*}
				\EV*{\norm*{\bar{X}_t}_{\gamma,p}^2}\lesssim \nu^{-2\gamma}\phi(\nu),\quad t\in [0,T].
			\end{equation*}
\end{lemma}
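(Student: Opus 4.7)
The strategy is the classical factorization (stochastic Fubini) method. Pick an auxiliary exponent $\alpha$ with $\gamma+1/p<\alpha<(1-1/\beta)/2$, an interval that is non-empty precisely because of the standing hypothesis $\gamma<(1-1/\beta)/2-1/p$. The identity $\frac{\sin(\pi\alpha)}{\pi}\int_r^t(t-s)^{\alpha-1}(s-r)^{-\alpha}\,\D s=1$ together with the semi-group property and a stochastic Fubini argument rewrites
\begin{equation*}
\bar{X}_t=\frac{\sin(\pi\alpha)}{\pi}\int_0^t (t-s)^{\alpha-1}S_{\nu(t-s)}Y_s\,\D s,\qquad Y_s\coloneqq \int_0^s (s-r)^{-\alpha}S_{\nu(s-r)}B\,\D W_r,
\end{equation*}
exhibiting $\bar{X}$ as a deterministic Bochner integral driven by the Gaussian process $Y$. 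The lower bound $\alpha>\gamma+1/p$ will later provide continuous $\mathcal{B}_{\gamma,p}$-valued paths for the outer convolution via a standard deterministic regularization lemma.

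\textbf{Moment bound on $Y_s$.} For each $y\in\Lambda$, $Y_s(y)$ is centred Gaussian, so Jensen's inequality, Fubini, and Gaussian moment equivalence give
\begin{equation*}
\EV*{\norm*{Y_s}_{L^p}^2}\le \EV*{\norm*{Y_s}_{L^p}^p}^{2/p}\lesssim \sup_{y\in\Lambda}\EV*{Y_s(y)^2}.
\end{equation*}
It\^o's isometry, boundedness of $B$, and the kernel estimate of Assumption \ref{assump:discretize}(v) yield
\begin{equation*}
\sup_{y\in\Lambda}\EV*{Y_s(y)^2}=\sup_{y\in\Lambda}\int_0^s (s-r)^{-2\alpha}\norm*{BG_{\nu(s-r)}(y,\MTemptyplaceholder)}^2\,\D r\lesssim \int_0^s (s-r)^{-2\alpha}\norm*{S_{\nu(s-r)}}_{\operatorname{HS}}^2\,\D r.
\end{equation*}
The eigenvalue bound $\mu_k\sim k^\beta$ implies $\norm*{S_{\nu u}}_{\operatorname{HS}}^2\sim (\nu u)^{-1/\beta}$, and since $2\alpha+1/\beta<1$ the weighted integral is of order $s^{-2\alpha}\phi(\nu,s)$, whence $\EV*{\norm*{Y_s}_{L^p}^2}\lesssim s^{-2\alpha}\phi(\nu,s)$.

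\textbf{Closing the estimate.} Analyticity of $(-A)$ on $L^p$ yields the interpolation-space bound $\norm*{S_{\nu u}z}_{\gamma,p}\lesssim (\nu u)^{-\gamma}\norm*{z}_{L^p}$. Minkowski's integral inequality applied to the factorization formula, together with the monotonicity $\phi(\nu,s)\le\phi(\nu,t)$, then gives
\begin{equation*}
\EV*{\norm*{\bar{X}_t}_{\gamma,p}^2}^{1/2}\lesssim \nu^{-\gamma}\int_0^t (t-s)^{\alpha-1-\gamma}\EV*{\norm*{Y_s}_{L^p}^2}^{1/2}\,\D s\lesssim \nu^{-\gamma}\phi(\nu,t)^{1/2}\int_0^t (t-s)^{\alpha-1-\gamma}s^{-\alpha}\,\D s.
\end{equation*}
The last integral equals $c_{\alpha,\gamma}t^{-\gamma}$ with $c_{\alpha,\gamma}<\infty$ since $\gamma<\alpha<1$. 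Squaring and invoking $\phi(\nu,t)\sim\nu^{-1/\beta}t^{1-1/\beta}$ yields $\EV*{\norm*{\bar{X}_t}_{\gamma,p}^2}\lesssim \nu^{-2\gamma-1/\beta}t^{1-2\gamma-1/\beta}$. Because $1-2\gamma-1/\beta>0$, this is uniformly bounded on $[0,T]$ by a constant multiple of $\nu^{-2\gamma-1/\beta}\sim \nu^{-2\gamma}\phi(\nu)$, establishing the asserted inequality. Path continuity $\bar{X}_{\MTemptyplaceholder}\in C([0,T],\mathcal{B}_{\gamma,p})$ then follows by applying the deterministic convolution regularity lemma for analytic semi-groups to $Y_{\MTemptyplaceholder}\in L^p([0,T],L^p(\Lambda))$ pathwise; a.s.\ finiteness of that norm is a consequence of the moment bound on $Y_s$ and Fubini.

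\textbf{Main obstacle.} The delicate point is that the factorization intrinsically produces a $t^{-2\gamma}$ blow-up at the time origin, which is absorbed only because $\phi(\nu,t)$ vanishes sufficiently fast as $t\to 0$ -- this is exactly the content of $1-2\gamma-1/\beta>0$. The $1/p$-slack in the hypothesis is what opens a non-empty window for $\alpha$, so that the stochastic moment bound on $Y_s$ (requiring $\alpha<(1-1/\beta)/2$) and the deterministic continuity result for the outer convolution (requiring $\alpha>\gamma+1/p$) can be satisfied simultaneously.
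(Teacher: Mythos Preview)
Your proposal is correct and follows essentially the same route as the paper: both use the factorization method with an auxiliary exponent in the window $(\gamma+1/p,\,(1-1/\beta)/2)$, bound the pointwise second moment of the intermediate process $Y$ via Assumption~\ref{assump:discretize}(v) and $\norm{S_{\nu u}}_{\operatorname{HS}}^2\sim(\nu u)^{-1/\beta}$, and then apply the smoothing estimate $\norm{S_{\nu u}z}_{\gamma,p}\lesssim(\nu u)^{-\gamma}\norm{z}_{L^p}$ to the outer convolution. The only cosmetic difference is that you close the outer integral with Minkowski's inequality and a Beta integral, whereas the paper uses H\"older's inequality with conjugate exponent $q=p/(p-1)$; both yield the same final bound $\nu^{-2\gamma-1/\beta}t^{1-2\gamma-1/\beta}$.
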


\begin{proof}
			For $t=0$ the claim is trivial, hence consider $t>0$. $\bar{X}_{\MTemptyplaceholder}\in C([0,T],\mathcal{B}_{\gamma,p})$ follows directly from Theorem 5.25 of \cite{DaPrato2014}. We are left with tracing the impact of $\nu\to 0$ for the norm bound through the proof. Let $\gamma+1/p<\eta< (1-1/\beta)/2$ and define $Y_\eta^\nu(t)\coloneqq\int_0^t (t-s)^{-\eta}S_{\nu(t-s)} B \,\D W_s$. Using the Malliavin divergence operator $\delta$ and $\norm*{S_u}_{\operatorname{HS}}^2\lesssim u^{-1/\beta}$ for $u>0$, we obtain
			\begin{align*}
				\EV*{Y_\eta^\nu(t,x)^2} &= \EV*{\delta((t-\MTemptyplaceholder)^{-\eta}G_{\nu(t-\MTemptyplaceholder)}(x,\MTemptyplaceholder))^2} =\norm*{(t-\MTemptyplaceholder)^{-\eta}G_{\nu(t-\MTemptyplaceholder)}(x,\MTemptyplaceholder)}_{\mathbb{H}}^2 \\
				&= \int_0^t(t-s)^{-2\eta}\norm*{B G_{\nu(t-s)}(x,\MTemptyplaceholder)}^2\,\D s\lesssim \int_0^t (t-s)^{-2\eta}\norm*{S_{\nu(t-s)}}_{\operatorname{HS}}^2 \,\D s\\
				&\lesssim \nu^{-1/\beta}\int_0^t (t-s)^{-2\eta - 1/\beta}\,\D s \lesssim \nu^{-1/\beta}t^{1-2\eta -1/\beta}.
				%&\le \norm{(-A)^{-\beta}}_{\operatorname{HS}}^2\int_0^t s^{-2\eta}\norm*{(-A)^\beta S_{\nu s}}^2\,\D s\\
				%&\lesssim \nu^{-1/\beta}\int_0^t s^{-2\eta-1/\beta}\,\D s\lesssim \nu^{-1/\beta}t^{1-2\eta-1/\beta}.
			\end{align*}
			%by part (iv) of Assumption \ref{assump:kernel}. On the other hand, we have
			%\begin{equation*}
			%	\EV*{Y_\eta^\nu(t,x)^2}=\sum_{k=1}^\infty \int_0^t s^{-2\eta}e^{-2\nu \lambda_ks}\,\D s e_k(x)^2\lesssim \nu^{-1+2\eta}\sum_{k=1}^\infty \lambda_k^{-1+2\eta}.
			%\end{equation*}
			Finiteness of the integral follows from our choice $2\eta<1-1/\beta$. Hence, using $\EV{\abs{Y_\eta^\nu(t,x)}^{p}}\lesssim \EV{\abs{Y_\eta^\nu(t,x)}^2}^{p/2}$ by Gaussianity, we see
			\begin{equation*}
				\EV*{\norm*{Y_\eta^\nu(t)}_{0,p}^p}\lesssim \nu^{-p/(2\beta)} t^{(1-2\eta-1/\beta)p/2}.
			\end{equation*}
			From Theorem 5.10 of \cite{DaPrato2014} we know
			\begin{equation*}
				\bar{X}_t = \frac{\sin(\eta\pi)}{\pi} \int_0^t (t-s)^{\eta-1}S_{\nu(t-s)} Y_\eta^\nu(s)\,\D s.
			\end{equation*}			
			Up to the constant factor, we bound the right-hand side, using Hölder's inequality with $\frac 1p+\frac 1q=1$,
			\begin{align*}
				\EV*{\norm*{\int_0^t (t-s)^{\eta-1}S_{\nu(t-s)} Y_\eta^\nu (s)\,\D s}_{\gamma,p}^2} \hspace{-10em}&\\
				&\lesssim \EV*{\left(\int_0^t \nu^{-\gamma}(t-s)^{\eta-1-\gamma}\norm*{Y_\eta^\nu(s)}_{0,p}\,\D s\right)^2}\\
				&\le	\nu^{-2\gamma}\left(\int_0^t (t-s)^{q(\eta-1-\gamma)}\,\D s\right)^{2/q}
					\EV*{\left(\int_0^t \norm*{Y_\eta^\nu(s)}_{0,p}^p\,\D s\right)^{2/p}}\\
			&\lesssim \nu^{-2\gamma} t^{(1+q(\eta-1-\gamma))2/q} \left(\EV*{\int_0^t \norm*{Y_\eta^\nu(s)}_{0,p}^p\,\D s}\right)^{2/p}\\
			&\lesssim \nu^{-1/\beta-2\gamma}t^{1-2\gamma-1/\beta},
			\end{align*}
noting that $\eta-1-\gamma>1/p-1=-1/q$. Because of $1-2\gamma-1/\beta>0$ we can omit the $t$-dependence.
\end{proof}

In fact, the following lemma shows that the Hilbert space structure in the case $p=2$ allows for a sharper estimate.
		\begin{lemma}\label{lem:spatial_reg_linear_HS}
			Grant Assumption \ref{assump:discretize} with $p=2$ and $0\le \gamma<(1-1/\beta)/2$.  Then $\bar{X}_{\MTemptyplaceholder}\in C([0,T],\mathcal{B}_{\gamma,2})$ with
			\begin{equation*}
				\EV*{\norm*{\bar{X}_t}_{\gamma,2}^2}\lesssim \nu^{-2\gamma}\phi(\nu),\quad t\in [0,T].
			\end{equation*}
		\end{lemma}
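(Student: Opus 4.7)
My plan is to exploit the Hilbert-space structure given by Assumption \ref{assump:discretize}, which at $p=2$ lets us avoid the factorization detour used in Lemma \ref{lem:spatial_reg_linear} and handle $\mu_k^{2\gamma}$ directly at the Hilbert--Schmidt level. Since $(-A)$ is self-adjoint with eigensystem $(\mu_k,w_k)$, the interpolation norm $\norm*{}_{\gamma,2}$ is equivalent to the fractional-power norm $\norm*{(-A)^\gamma\MTemptyplaceholder}$ (standard spectral identification, cf.\ Section 4.4 of \cite{Hairer2009}). Writing the stochastic convolution spectrally and applying It\^o's isometry for Hilbert--Schmidt operator integrands, I would compute
\begin{equation*}
\EV*{\norm*{\bar X_t}_{\gamma,2}^2}\sim \EV*{\norm*{(-A)^\gamma \bar X_t}^2} = \int_0^t \norm*{(-A)^\gamma S_{\nu(t-s)}B}_{\operatorname{HS}}^2\,\D s \le \norm*{B}^2\int_0^t \norm*{(-A)^\gamma S_{\nu(t-s)}}_{\operatorname{HS}}^2\,\D s.
\end{equation*}

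The key point is then a clean spectral bound. Using $\norm*{(-A)^\gamma S_u}_{\operatorname{HS}}^2=\sum_k\mu_k^{2\gamma}e^{-2\mu_k u}$ together with the elementary inequality $x^{2\gamma}e^{-x}\le C_\gamma$, I would trade the spectral weight for a time singularity:
\begin{equation*}
\sum_k\mu_k^{2\gamma}e^{-2\mu_k u}\lesssim u^{-2\gamma}\sum_k e^{-\mu_k u}\lesssim u^{-2\gamma-1/\beta},
\end{equation*}
where the last step uses the eigenvalue growth $\mu_k\asymp k^\beta$ to get $\sum_k e^{-\mu_k u}\asymp u^{-1/\beta}$ as $u\downarrow 0$. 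The integrand $(\nu(t-s))^{-2\gamma-1/\beta}$ is integrable in $s$ precisely when $2\gamma+1/\beta<1$, i.e.\ under the hypothesis $\gamma<(1-1/\beta)/2$. This yields
\begin{equation*}
\EV*{\norm*{\bar X_t}_{\gamma,2}^2}\lesssim \nu^{-2\gamma-1/\beta}t^{1-2\gamma-1/\beta}\lesssim \nu^{-2\gamma}\phi(\nu),
\end{equation*}
using $\phi(\nu)\sim \nu^{-1/\beta}T^{1-1/\beta}$ from the eigenvalue asymptotics, as in Lemma \ref{lem:examples_Assumptions}.

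Continuity $\bar X_\cdot\in C([0,T],\mathcal{B}_{\gamma,2})$ would be obtained by Kolmogorov's criterion: an analogous computation bounds $\EV*{\norm*{\bar X_t-\bar X_s}_{\gamma,2}^2}$ by a Hölder-type power of $|t-s|$ as long as $\gamma$ stays strictly below $(1-1/\beta)/2$, since one may split the It\^o isometry between a contribution on $[0,s]$ (using $\norm{((-A)^\gamma S_{\nu(t-s)}-\identity)S_{\nu(s-u)}B}_{\operatorname{HS}}$ and the standard smoothing estimate for $\identity-S_\tau$) and a contribution on $[s,t]$ controlled as above. Alternatively the factorization argument of Lemma \ref{lem:spatial_reg_linear} goes through verbatim after replacing the $L^p$-bound on $Y_\eta^\nu$ by the Hilbert--Schmidt one.

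The main technical point --- and the reason the Hilbert case recovers the full range $\gamma<(1-1/\beta)/2$ instead of losing $1/p$ --- is precisely that the Hilbert--Schmidt norm absorbs the spectral weight $\mu_k^{2\gamma}$ via the pointwise-in-$k$ trick $x^{2\gamma}e^{-x}\lesssim 1$, producing only a power-law singularity in time. In the $L^p$ proof of Lemma \ref{lem:spatial_reg_linear}, moving from $L^2$-spectral moments to $L^p$-moments via the factorization method forces $\eta>\gamma+1/p$, which is exactly the source of the spurious $1/p$ loss absent here. No substantial obstacle arises beyond carefully justifying the norm equivalence $\norm*{}_{\gamma,2}\sim \norm*{(-A)^\gamma\MTemptyplaceholder}$, which is classical for self-adjoint operators with polynomial eigenvalue growth.
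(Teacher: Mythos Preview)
Your proposal is correct and follows essentially the same route as the paper: identify $\norm*{}_{\gamma,2}$ with the fractional-power norm, apply It\^o's isometry to reduce to $\int_0^t\norm*{(-A)^\gamma S_{\nu s}}_{\operatorname{HS}}^2\,\D s$, and then estimate the resulting spectral sum. The only cosmetic difference is the order of operations: the paper integrates in time first to obtain $\sum_k((\mu_k\nu)^{-1}\wedge t)\mu_k^{2\gamma}\lesssim \nu^{-1/\beta-2\gamma}$ (using $(2\gamma-1)\beta<-1$ for summability), whereas you trade $\mu_k^{2\gamma}e^{-2\mu_k u}$ for $u^{-2\gamma}e^{-\mu_k u}$ pointwise in $k$ before summing and integrating; both yield the same bound. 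For continuity the paper simply invokes Theorem 5.10 of \cite{Hairer2009} after checking the Hilbert--Schmidt condition $\int_0^T t^{-2\kappa}\norm*{(-A)^\gamma S_{\nu t}}_{\operatorname{HS}}^2\,\D t<\infty$, which is quicker than your Kolmogorov sketch but equivalent in spirit.
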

		\begin{proof}
			Note that for any $1/(2\beta)<\rho<1/2$, $(-A)^{-\rho}$ is Hilbert Schmidt and $\gamma<1/2-\rho$. The first part of the claim follows from Theorem 5.10 of \cite{Hairer2009} and the estimate
			\begin{equation*}
				\int_0^T t^{-2\kappa}\norm*{S_{\nu t}(-A)^\gamma}_{\operatorname{HS}}^2\,\D t\le \nu^{-2\gamma+2\rho}\norm*{(-A)^\rho}_{\operatorname{HS}}^2\int_0^T t^{-2\kappa-2\gamma+2\rho}\,\D t<\infty
			\end{equation*}
			for any $\kappa\in (0,(1-2\gamma+2\rho)/2)$. The second claim follows from
			\begin{align*}
				\EV*{\norm*{\bar{X}_t}_{\gamma,2}^2} &= \int_0^t\norm*{(-A)^\gamma S_{\nu(t-s)}}_{\operatorname{HS}}^2\,\D s = \sum_{k\in\mathbb{N}}\int_0^t e^{-2\mu_k\nu (t-s)} \mu_k^{2\gamma}\,\D s \\
&\sim \sum_{k\in\mathbb{N}} \big((\mu_k\nu)^{-1}\wedge t\big) \mu_k^{2\gamma} \sim \sum_{k\in\mathbb{N}} \big((k^\beta\nu)^{-1}\wedge t\big) k^{2\gamma\beta}\lesssim \nu^{-1/\beta-2\gamma},
%&\sim \nu^{-1/\beta-2\gamma}\int_0^t \sum_{k\in\mathbb{N}}e^{-2k^\beta(t-s)}k^{2\gamma\beta}\,\D s = \nu^{-1/\beta-2\gamma}\int_0^t \norm*{(-A)^\gamma S_{t-s}}_{\operatorname{HS}}^2\,\D s\\
%				 &\le \nu^{-1/\beta-2\gamma} \norm*{(-A)^{-\rho}}_{\operatorname{HS}}^2\int_0^t \norm*{(-A)^{\gamma+\rho}S_{t-s}}^2\,\D s \\
%				 &\lesssim \nu^{-1/\beta-2\gamma} \int_0^t (t-s)^{-2\gamma-2\rho}\,\D s\lesssim \nu^{-1/\beta-2\gamma},
\end{align*}
as $(2\gamma-1)\beta<-1$. Recalling  $\phi(\nu)\sim\nu^{-1/\beta}$ completes the proof.
\end{proof}

\begin{lemma}\label{lem:spat_reg}
			Grant Assumption \ref{assump:discretize} with either $0\le \gamma<(1-1/\beta)/2-1/p$ or $p=2$ and $0\le \gamma<(1-1/\beta)/2$. Then $X_t\in \mathcal{B}_{\gamma,p}$ for all $t\in(0,T]$ with
			\begin{equation*}
				\EV*{\norm*{X_t}_{\gamma,p}^2}\lesssim \nu^{-2\gamma}(\phi(\nu) + t^{-2\gamma}).
			\end{equation*}
		\end{lemma}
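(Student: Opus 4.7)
The plan is to apply the variation-of-constants formula from Assumption \ref{assump:general},
\[
X_t = S_{\nu t}X_0 + \theta\int_0^t S_{\nu(t-s)}F(X_s)\,\D s + \bar{X}_t,
\]
and estimate each term in the $\mathcal{B}_{\gamma,p}$-norm. Lemmas \ref{lem:spatial_reg_linear} (for general $p$) and \ref{lem:spatial_reg_linear_HS} (for $p=2$) already take care of the stochastic convolution $\bar{X}_t$, yielding $\EV*{\norm*{\bar{X}_t}_{\gamma,p}^2}\lesssim \nu^{-2\gamma}\phi(\nu)$. For the other two terms I would invoke the standard smoothing estimate for analytic semi-groups on the interpolation space (Assumption \ref{assump:discretize}(i)), namely $\norm*{S_{\nu t}z}_{\gamma,p}\lesssim (\nu t)^{-\gamma}\norm*{z}_{0,p}$, together with the growth condition (iii) on $F$.

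Before attacking the $\gamma$-bound directly, I would first establish the a priori $L^p$-bound $\sup_{s\in[0,T]}\EV*{\norm*{X_s}_{0,p}^2}\lesssim \phi(\nu)$. This follows by taking $\gamma=0$ in the variation-of-constants formula: $S_{\nu t}$ is bounded on $L^p(\Lambda)$ uniformly in $t\ge 0$ (since the semi-group is analytic), the Lipschitz/growth assumption on $F$ yields $\norm*{F(X_s)}_{0,p}\lesssim \norm*{X_s}_{0,p}+1$, and the already-proven estimate $\EV*{\norm*{\bar X_s}_{0,p}^2}\lesssim \phi(\nu)$ (via Gaussian moment equivalence applied to $\bar X_s(x)$, whose pointwise variance is controlled by Assumption \ref{assump:discretize}(v)) gives after squaring, taking expectations and Gronwall the desired uniform bound in $s$.

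With this in hand, for the $\gamma>0$ case I would estimate
\[
\EV*{\norm*{S_{\nu t}X_0}_{\gamma,p}^2}\lesssim (\nu t)^{-2\gamma}\EV*{\norm*{X_0}_{0,p}^2}\lesssim \nu^{-2\gamma}t^{-2\gamma},
\]
using Assumption \ref{assump:discretize}(ii), and for the drift term apply the semi-group smoothing pointwise in $s$ followed by Cauchy--Schwarz in time:
\[
\EV*{\Bigl\lVert\int_0^t S_{\nu(t-s)}F(X_s)\,\D s\Bigr\rVert_{\gamma,p}^2}\lesssim \nu^{-2\gamma}\,t^{1-\gamma}\int_0^t (t-s)^{-\gamma}\bigl(\EV*{\norm*{X_s}_{0,p}^2}+1\bigr)\,\D s\lesssim \nu^{-2\gamma}\phi(\nu),
\]
where I used $\gamma<1$ for integrability of $(t-s)^{-\gamma}$ and the uniform $L^p$-bound from the previous step. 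Summing the three contributions gives the claim $\EV*{\norm*{X_t}_{\gamma,p}^2}\lesssim \nu^{-2\gamma}(\phi(\nu)+t^{-2\gamma})$.

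The main technical subtlety is the two-step bootstrap: the bound on $X_s$ in $\mathcal{B}_{\gamma,p}$ can diverge like $t^{-2\gamma}$ at $s=0$, so we cannot feed it directly into a Gronwall argument at regularity level $\gamma$. Splitting off first the $L^p$-regularity (where no singular weight appears and Gronwall closes cleanly) and only then upgrading to $\mathcal{B}_{\gamma,p}$ via the smoothing estimate is what avoids this issue and yields the sharp scaling in $\nu$.
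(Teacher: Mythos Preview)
Your proposal is correct and follows essentially the same two-step bootstrap as the paper: first close a Gronwall argument at the $L^p$-level to obtain $\sup_{s}\EV*{\norm*{X_s}_{0,p}^2}\lesssim\phi(\nu)$, then upgrade to $\mathcal{B}_{\gamma,p}$ via the analytic smoothing estimate $\norm*{S_{\nu t}z}_{\gamma,p}\lesssim(\nu t)^{-\gamma}\norm*{z}_{0,p}$ applied to the initial and drift terms, combined with Lemmas \ref{lem:spatial_reg_linear}/\ref{lem:spatial_reg_linear_HS} for $\bar X_t$. The only cosmetic differences are that the paper uses the cruder Cauchy--Schwarz $\bigl(\int_0^t\cdots\bigr)^2\le t\int_0^t(\cdots)^2$ (giving a $(t-s)^{-2\gamma}$ weight, still integrable since $\gamma<1/2$) rather than your weighted version, and that the paper obtains the $L^p$-bound on $\bar X$ directly from Lemma \ref{lem:spatial_reg_linear} at $\gamma=0$ instead of your pointwise Gaussian-moment argument via Assumption \ref{assump:discretize}(v); both routes are valid.
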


		\begin{proof}
			The first part of the claim follows from Theorem 6.5 of \cite{Hairer2009}. For a quantification of the $\norm*{}_{0,p}$-norm we compute with Lemma \ref{lem:spatial_reg_linear} for $\gamma=0$
			\begin{align*}
				\EV*{\norm*{X_t}_{0,p}^2}&\le 3\norm*{S_{\nu t} X_0}_{0,p}^2 + 3t\int_0^t \EV*{\norm*{S_{\nu(t-s)} F(X_s)}_{0,p}^2}\,\D s + 3\EV*{\norm*{\bar{X}_t}_{0,p}^2}\\
				&\lesssim \EV*{\norm*{X_0}_{0,p}^2} + t\Big(\int_0^t \EV*{\norm*{X_s}_{0,p}^2}\,\D s+t\Big) + \nu^{-1/\beta}.
			\end{align*}
			Using Gronwall's inequality, we arrive at
			\begin{equation*}
				\EV*{\norm*{F(X_t)}_{0,p}^2}\lesssim \EV*{\norm*{X_t}_{0,p}^2}\lesssim \nu^{-1/\beta}.
			\end{equation*}
			Combined with Lemmas \ref{lem:spatial_reg_linear} and \ref{lem:spatial_reg_linear_HS}, this allows us to compute the interpolation norm
			\begin{align*}
				\EV*{\norm*{X_t}_{\gamma,p}^2}&\le 3\norm*{S_{\nu t} X_0}_{\gamma,p}^2 + 3t\int_0^t \EV*{\norm*{S_{\nu(t-s)} F(X_s)}_{\gamma,p}^2}\,\D s + 3\EV*{\norm*{\bar{X}_t}_{\gamma,p}^2}\\
				&\lesssim (\nu t)^{-2\gamma}\EV*{\norm*{X_0}_{0,p}^2} + t \nu^{-2\gamma}\int_0^t(t-s)^{-2\gamma} \EV*{\norm*{F(X_s)}_{0,p}^2}\,\D s + \nu^{-1/\beta-2\gamma}\\
				&\lesssim (\nu t)^{-2\gamma} + t\nu^{-2\gamma-1/\beta}t^{-2\gamma + 1}+\nu^{-1/\beta-2\gamma}\\
				&\lesssim \nu^{-2\gamma-1/\beta} + t^{-2\gamma}\nu^{-2\gamma}.
			\end{align*}
			Note that the time integral is finite because of $\gamma<(1-1/\beta)/2-1/p<1/2$.
		\end{proof}
%		\begin{theorem}[Sobolev embedding (Theorem 8.2 of \cite{DiNezza2012})]\label{thm:Sobolev_embed}
%			Let $U\subset\mathbb{R}^d$ be a Lipschitz-domain and let $\eta\in (0,1)$, $p>1$ such that $\eta p>d$, then we have
%			\begin{equation*}
%				W^{\eta,p}(U)\hookrightarrow C^{\eta-d/p}(U)
%			\end{equation*}
%			continuously.
%		\end{theorem}

\begin{corollary}\label{cor:Höldernorm}
			Grant Assumption \ref{assump:discretize} with either $0\le \gamma<(1-1/\beta)/2-1/p$ and $\tilde{\gamma}-d/p>0 $ or $p=2$,$ \tilde{\gamma}>d/2$ and $0<\gamma<(1-1/\beta)/2$. Then $X_t\in C^{\tilde{\gamma}-d/p}$ holds for all $t\in (0,T]$ and
			\begin{equation*} \max\left(\EV*{\norm*{F(X_t)}_{C^{\tilde{\gamma}-d/p}}^2},\EV*{\norm*{X_t}_{C^{\tilde{\gamma}-d/p}}^2}\right)\lesssim \nu^{-2\gamma}(\phi(\nu)+t^{-2\gamma}).
			\end{equation*}
		\end{corollary}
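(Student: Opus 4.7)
The plan is to chain together Lemma \ref{lem:spat_reg} with the two continuous embeddings provided by Assumption \ref{assump:discretize}(iv) and classical fractional Sobolev theory. First I would invoke Lemma \ref{lem:spat_reg} under the given parameter ranges to obtain
\[
\EV*{\norm*{X_t}_{\gamma,p}^2}\lesssim \nu^{-2\gamma}(\phi(\nu) + t^{-2\gamma}),\quad t\in(0,T].
\]
Next, by Assumption \ref{assump:discretize}(iv), the interpolation space $\mathcal{B}_{\gamma,p}$ embeds continuously into $W^{\tilde{\gamma},p}$, so $\norm*{z}_{W^{\tilde{\gamma},p}}\lesssim\norm*{z}_{\gamma,p}$ for $z\in\mathcal{B}_{\gamma,p}$.

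The second ingredient is the Sobolev embedding into Hölder spaces. For $p\ge 2$ with $\tilde{\gamma}-d/p>0$, or for $p=2$ with $\tilde{\gamma}>d/2$, the fractional Sobolev embedding (see e.g.\ Theorem 8.2 of \cite{DiNezza2012}) yields $W^{\tilde{\gamma},p}\hookrightarrow C^{\tilde{\gamma}-d/p}$ continuously, so that $\norm*{z}_{C^{\tilde{\gamma}-d/p}}\lesssim \norm*{z}_{W^{\tilde{\gamma},p}}$. Chaining these two embeddings gives
\[
\EV*{\norm*{X_t}_{C^{\tilde{\gamma}-d/p}}^2}\lesssim \EV*{\norm*{X_t}_{\gamma,p}^2}\lesssim \nu^{-2\gamma}(\phi(\nu)+t^{-2\gamma}),
\]
which handles the $X_t$ part of the claim and in particular shows $X_t\in C^{\tilde{\gamma}-d/p}$ almost surely.

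For $F(X_t)$ I would use Assumption \ref{assump:discretize}(iii), namely $\norm*{F(z)}_{\gamma,p}\le C(\norm*{z}_{\gamma,p}+1)$, which yields
\[
\EV*{\norm*{F(X_t)}_{\gamma,p}^2}\lesssim \EV*{\norm*{X_t}_{\gamma,p}^2}+1\lesssim \nu^{-2\gamma}(\phi(\nu)+t^{-2\gamma}),
\]
since $\nu^{-2\gamma}\phi(\nu)\to\infty$ as $\nu\to 0$ dominates the constant. Applying the same two embeddings to $F(X_t)$ closes the argument. There is no substantial obstacle here: the result is essentially a bookkeeping composition of Lemma \ref{lem:spat_reg} with the embedding hierarchy $\mathcal{B}_{\gamma,p}\hookrightarrow W^{\tilde{\gamma},p}\hookrightarrow C^{\tilde{\gamma}-d/p}$, and the mild nuisance is just verifying that the parameter constraints in the two cases ($p>2$ versus $p=2$) match the hypotheses under which the Sobolev--Hölder embedding is valid.
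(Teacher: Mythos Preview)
Your proposal is correct and follows essentially the same route as the paper's proof, which simply invokes Lemma \ref{lem:spat_reg}, Assumption \ref{assump:discretize} (parts (iii) and (iv)), and the Sobolev embedding $W^{\tilde{\gamma},p}\hookrightarrow C^{\tilde{\gamma}-d/p}$ to chain the embeddings $\mathcal{B}_{\gamma,p}\hookrightarrow W^{\tilde{\gamma},p}\hookrightarrow C^{\tilde{\gamma}-d/p}$. Your write-up is in fact more explicit than the paper's one-line proof, in particular by spelling out how Assumption \ref{assump:discretize}(iii) handles the $F(X_t)$ term.
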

		\begin{proof}
			Follows from Assumption \ref{assump:discretize}, Lemma \ref{lem:spat_reg} and the Sobolev Embedding Theorem (Theorem 8.2 of \cite{DiNezza2012}), which yield the continuous embeddings	$\mathcal{B}_{\gamma,p}\hookrightarrow W^{\tilde{\gamma},p}\hookrightarrow C^{\tilde{\gamma}-1/p}$.
\end{proof}

\subsection{\textbf{Temporal discretization}}
		
		First, we control the deviation of the time-discretised estimator $\hat{\theta}_{\nu,\delta_t }^{\operatorname{disc}}$ from $\hat{\theta}_\nu$. We will make use of the regularity results of Lemma \ref{lem:spat_reg} in the case $p=2$. In order to be consistent with the previous chapters, we still employ the notation $\norm*{}=\norm*{}_{0,2}$.
		\begin{lemma}\label{lem:temporal_Hölder}
			Grant Assumption \ref{assump:discretize} with $p=2$, $\gamma< (1-1/\beta)/2$. For $t\in[t_k,t_{k+1}]$, $k\in\Set*{1,\dots,n}$, we have
			\begin{equation*}
				%\EV*{\norm*{F(X_t)-F(X_{t_k})}^2} \le
				\EV*{\norm*{X_t-X_{t_k}}^2}\lesssim \delta_{t,k} ^{2\gamma}\left(t_k^{-2\gamma} +\phi(\nu)\right)\le 1 +\delta_{t,k}^{2\gamma}\phi(\nu).
			\end{equation*}
		\end{lemma}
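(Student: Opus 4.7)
The plan is to decompose $X_t-X_{t_k}$ via the variation-of-constants formula started at time $t_k$,
\begin{equation*}
X_t - X_{t_k} = \bigl(S_{\nu(t-t_k)}-\identity\bigr)X_{t_k} + \int_{t_k}^t S_{\nu(t-s)}\theta F(X_s)\,\D s + \int_{t_k}^t S_{\nu(t-s)}B\,\D W_s,
\end{equation*}
and bound the $L^2(\prob{})$-norm of each summand separately.

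First, for the semi-group increment, I would use the interpolation-space estimate $\norm*{(S_{\nu u}-\identity)z}^2\le (\nu u)^{2\gamma}\norm*{z}_{\gamma,2}^2$ (which for $p=2$ reduces to the spectral inequality $(1-e^{-\nu u \mu_k})^2\le (\nu u\mu_k)^{2\gamma}$, valid for $\gamma\in[0,1]$). Applying this with $z=X_{t_k}$ and invoking Lemma~\ref{lem:spat_reg} (in the Hilbert case $p=2$) yields
\begin{equation*}
\EV*{\norm*{\bigl(S_{\nu\delta_{t,k}}-\identity\bigr)X_{t_k}}^2}\lesssim (\nu\delta_{t,k})^{2\gamma}\cdot\nu^{-2\gamma}\bigl(\phi(\nu)+t_k^{-2\gamma}\bigr)=\delta_{t,k}^{2\gamma}\bigl(\phi(\nu)+t_k^{-2\gamma}\bigr).
\end{equation*}

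Next, for the drift term I would apply the Cauchy–Schwarz inequality in time, contractivity of $S_{\nu(t-s)}$, Assumption~\ref{assump:discretize}(iii) (whence $\norm*{F(X_s)}\lesssim \norm*{X_s}+1$), and the bound $\EV*{\norm*{X_s}^2}\lesssim\phi(\nu)$ from (the proof of) Lemma~\ref{lem:non-linearexpectation}, obtaining a contribution of order $\delta_{t,k}^2\phi(\nu)$. For the stochastic convolution, Itô's isometry together with the Hilbert–Schmidt bound $\norm*{S_{\nu u}B}_{\operatorname{HS}}\le\norm*{B}\norm*{S_{\nu u}}_{\operatorname{HS}}$ yields
\begin{equation*}
\EV*{\norm*{\int_{t_k}^t S_{\nu(t-s)}B\,\D W_s}^2}\lesssim \phi(\nu,t-t_k)\lesssim \nu^{-1/\beta}\delta_{t,k}^{1-1/\beta}\sim \delta_{t,k}^{1-1/\beta}\phi(\nu),
\end{equation*}
using the scaling $\phi(\nu,t)\sim\nu^{-1/\beta}t^{1-1/\beta}$ from Assumption~\ref{assump:discretize}.

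Finally, since $2\gamma<1-1/\beta<2$ and $\delta_{t,k}\le T$ stays bounded, both $\delta_{t,k}^2$ and $\delta_{t,k}^{1-1/\beta}$ are dominated by $\delta_{t,k}^{2\gamma}$ times a constant, so summing the three contributions gives the first inequality. The second inequality then follows from $\delta_{t,k}\le Ct_k$ (which is a consequence of the homogeneity assumption $\delta_t\le C\delta_{t,k}$ forcing $t_k\ge \delta_{t,k-1}\gtrsim\delta_{t,k}$ for $k\ge 1$), so that $\delta_{t,k}^{2\gamma}t_k^{-2\gamma}\lesssim 1$. The main obstacle is a conceptual one, namely identifying the correct interpolation-space regularity that keeps the $\nu$-scaling sharp; this is exactly what Lemma~\ref{lem:spat_reg} provides, so the rest of the argument is routine.
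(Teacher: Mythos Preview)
Your proposal is correct and follows essentially the same line as the paper's proof: both decompose $X_t-X_{t_k}$ via the variation-of-constants formula started at $t_k$, control the semigroup increment by $\norm{(S_{\nu u}-\identity)z}\lesssim(\nu u)^\gamma\norm{z}_{\gamma,2}$ together with Lemma~\ref{lem:spat_reg}, bound the drift via the linear growth of $F$ and \eqref{eq:growthbound_L^2}, and handle the stochastic convolution by It\^o's isometry and the scaling $\phi(\nu,t)\sim\nu^{-1/\beta}t^{1-1/\beta}$. Your use of Cauchy--Schwarz in time for the drift even yields an extra $\delta_{t,k}$ factor compared to the paper's display, and your justification of $\delta_{t,k}^{2\gamma}t_k^{-2\gamma}\lesssim 1$ via grid homogeneity is the same as (indeed cleaner than) the paper's concluding remark.
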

		\begin{proof}
		Recall that there exists a constant $C$, only depending on $A$ and $\gamma$, such that $\norm*{S_tz-z}\le Ct^\gamma\norm*{z}_\gamma$ holds for $t\ge 0$. Using Lemma \ref{lem:spat_reg} with $p=2$ and \eqref{eq:growthbound_L^2}, we bound
		\begin{align*}
%			\EV*{\norm*{F(X_t)-F(X_{t_k})}^2}\hspace{-3em}&\\
			 \EV*{\norm*{X_t-X_{t_k}}^2}
			&\le
			\begin{multlined}[t]
				3\EV*{\norm*{S_{\nu(t-t_k)} X_{t_k}-X_{t_k}}^2} + 3\int_{t_k}^t\EV*{\norm*{S_{\nu(t-s)} F(X_s)}^2}\,\D s\\
				 +3\EV*{\norm*{\int_{t_k}^t S_{\nu(t-s)} B \,\D W_s}^2}
			\end{multlined}\\
			&\le\begin{multlined}[t]
				3C(\nu\delta_{t,k} )^{2\gamma}\EV*{\norm*{X_{t_k}}_{\gamma,2}^2} +3\int_{t_k}^t\EV*{\norm*{F(X_s)}^2}\,\D s \\
				  +3 \int_{t_k}^t \norm*{S_{\nu(t-s)}B}_{\operatorname{HS}}^2 \,\D s
				\end{multlined}\\
			&\lesssim \delta_{t,k}^{2\gamma}t_k^{-2\gamma} +\delta_{t,k}^{2\gamma}\nu^{-1/\beta} +  \delta_{t,k}^{1-1/\beta} \nu^{-1/\beta} + \delta_{t,k} + \delta_{t,k} \nu^{-1/\beta}\\
			&\lesssim \delta_{t,k} ^{2\gamma}\left(t_k^{-2\gamma} +\nu^{-1/\beta}\right).
		\end{align*}
		In the last line we used  $\gamma<(1-1/\beta)/2$ and $\beta>1$ to identify the dominating term for $\nu,\delta_t \lesssim 1$. Noting that $\delta_{t,k} t_k^{-1}\gtrsim 1$, we conclude the proof.
		\end{proof}
%		\begin{remark}\label{rmk:discret_temp}~
		%\begin{enumerate}[(i)]
			%\item In particular Lemma \ref{lem:temporal_Hölder} shows that, for $\tau>0$ we have the bound
			%\begin{equation*}
			%	\EV*{\norm*{F(X)}_{C^\gamma((\tau,T],L^2(\Lambda))}^2}\lesssim\EV*{\norm*{X}_{C^\gamma((\tau,T],L^2(\Lambda))}^2}\lesssim \tau^{-2\gamma} + \nu^{-1/\beta}.
			%\end{equation*}
			%Note that the scaling as $\nu\to 0$ of the right-hand side is independent of $\gamma$, which is a relict of applying the semi-group $S_{\delta_t}^\nu = S_{\nu \delta_t }$ between two time points in the proof of Lemma \ref{lem:temporal_Hölder}.
%			Note that the choice $\delta_t=\smallo(\nu^{1/(2\beta\gamma)})$ leads to
%			\begin{equation*}
%				\EV*{\norm*{F(X_t)-F(X_{t_k})}^2}=1+\smallo(1)
%			\end{equation*}
%			for $\abs*{t-t_k}\le\delta_t$.
			%\end{enumerate}
%		\end{remark}
		\begin{proposition}\label{prop:discret_temp}
			Grant Assumption \ref{assump:discretize} with $p=2$, $\gamma<(1-1/\beta)/2$, assume  the uniform bound $\norm*{B^{-2} F(z)}_{\gamma,2}\lesssim \norm*{z}_{\gamma,2}+1$, $z\in \mathcal{B}_{\gamma,2}$, and
 $\delta_t =\smallo(\nu^{1/(2\beta\gamma)})$. Then
			\begin{equation*}
				\EV*{\abs*{\hat{\theta}_\nu-\hat{\theta}_{\nu,\delta_t }^{\operatorname{disc}}}}=\smallo(\phi(\nu)^{-1/2}).
			\end{equation*}
		\end{proposition}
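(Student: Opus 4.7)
The plan is to expand
$$\hat{\theta}_\nu - \hat{\theta}_{\nu,\delta_t}^{\operatorname{disc}} = \frac{\mathcal{N}_\nu - \mathcal{N}_{\nu,\delta_t}}{\mathcal{I}_\nu} + \mathcal{N}_{\nu,\delta_t}\,\frac{D_{\nu,\delta_t} - \mathcal{I}_\nu}{\mathcal{I}_\nu D_{\nu,\delta_t}},$$
where $\mathcal{N}_\nu = \theta\mathcal{I}_\nu + \mathcal{M}_\nu$ is the continuous numerator and $\mathcal{N}_{\nu,\delta_t}$, $D_{\nu,\delta_t}$ denote the numerator and denominator of $\hat{\theta}_{\nu,\delta_t}^{\operatorname{disc}}$. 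Lemma \ref{lem:non-linearexpectation} together with Lemmas \ref{lem:Variance_Bound} and \ref{lem:inverse_Fisher} show that $\mathcal{I}_\nu$ and $D_{\nu,\delta_t}$ concentrate around $\phi(\nu)$, so the task reduces to proving that $\mathcal{N}_\nu-\mathcal{N}_{\nu,\delta_t}$ and $D_{\nu,\delta_t}-\mathcal{I}_\nu$ are $\smallo(\phi(\nu)^{1/2})$ in $L^1$.

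First I would apply the variation-of-constants formula on each subinterval $[t_k,t_{k+1}]$ to decompose
$$X_{t_{k+1}}-S_{\nu\delta_{t,k}}X_{t_k}=\theta\int_{t_k}^{t_{k+1}}S_{\nu(t_{k+1}-s)}F(X_s)\,\D s+\int_{t_k}^{t_{k+1}}S_{\nu(t_{k+1}-s)}B\,\D W_s.$$
Inserting this into $\mathcal{N}_{\nu,\delta_t}$ splits it into a drift part to be compared with $\theta\mathcal{I}_\nu$ and a martingale part to be compared with $\mathcal{M}_\nu$. For the drift difference I would insert the pivot $\langle B^{-1}F(X_{t_k}),B^{-1}F(X_s)\rangle$, producing (i) a semigroup error $\langle B^{-1}F(X_{t_k}),B^{-1}(S_{\nu(t_{k+1}-s)}-I)F(X_s)\rangle$ and (ii) a temporal error $\langle B^{-1}(F(X_{t_k})-F(X_s)),B^{-1}F(X_s)\rangle$. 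Dualising the semigroup onto $B^{-2}F(X_{t_k})$ gives the bound $(\nu\delta_t)^\gamma\|B^{-2}F(X_{t_k})\|_{\gamma,2}\|F(X_s)\|$; the extra hypothesis $\|B^{-2}F(z)\|_{\gamma,2}\lesssim\|z\|_{\gamma,2}+1$, Lemma \ref{lem:spat_reg} (with $p=2$) and the estimate \eqref{eq:growthbound_L^2} then yield a contribution of order $\delta_t^\gamma\phi(\nu)$. Term (ii) is controlled by Lipschitz continuity of $F$ and Lemma \ref{lem:temporal_Hölder}, of the same order. The denominator error factorises as $D_{\nu,\delta_t}-\mathcal{I}_\nu=\sum_k\int_{t_k}^{t_{k+1}}\langle B^{-1}(F(X_{t_k})-F(X_s)),B^{-1}(F(X_{t_k})+F(X_s))\rangle\,\D s$ and is bounded by the same argument.

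For the martingale part I would use self-adjointness of $B$ to identify the $\D W_s$-integrand on $[t_k,t_{k+1}]$ as $\xi_k(s):=BS_{\nu(t_{k+1}-s)}^\ast B^{-2}F(X_{t_k})$. By the Itô isometry, the second moment of the martingale error reduces to $\sum_k\int_{t_k}^{t_{k+1}}\EV*{\|\xi_k(s)-B^{-1}F(X_s)\|^2}\,\D s$. The decomposition
$$\xi_k(s)-B^{-1}F(X_s)=BS_{\nu(t_{k+1}-s)}^\ast B^{-2}[F(X_{t_k})-F(X_s)]+B\bigl[S_{\nu(t_{k+1}-s)}^\ast-I\bigr]B^{-2}F(X_s)$$
then triggers exactly the same two estimates as above, producing a mean-squared error of order $\delta_t^{2\gamma}\phi(\nu)$ and hence an $L^1$ martingale error of order $\delta_t^\gamma\phi(\nu)^{1/2}$.

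Assembling all bounds, the numerator and denominator errors are both $\mathcal{O}(\delta_t^\gamma\phi(\nu))$ in $L^1$; dividing by the denominator $\sim\phi(\nu)$ yields an overall approximation error of order $\delta_t^\gamma$. Since $\phi(\nu)^{-1/2}\sim\nu^{1/(2\beta)}$, the hypothesis $\delta_t=\smallo(\nu^{1/(2\beta\gamma)})$ is precisely what makes $\delta_t^\gamma\phi(\nu)^{1/2}\to0$, giving the desired $\smallo(\phi(\nu)^{-1/2})$ rate. The main obstacle is upgrading the natural in-probability bounds to the $L^1$-estimate claimed in the proposition, since $\mathcal{I}_\nu$ and $D_{\nu,\delta_t}$ appear in denominators and are a priori only lower-bounded with high probability; I would handle this by truncating on the event $\{c\phi(\nu)\le\mathcal{I}_\nu\wedge D_{\nu,\delta_t}\le c'\phi(\nu)\}$, which has probability tending to $1$ by Chebyshev's inequality and Lemmas \ref{lem:Variance_Bound}--\ref{lem:variance_linear}, and controlling the complement via Cauchy--Schwarz together with the moment bounds on $X_t$ supplied by Assumption \ref{assump:discretize}.
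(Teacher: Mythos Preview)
Your proposal follows essentially the same route as the paper: apply the variation-of-constants formula on each subinterval, split the resulting numerator into a martingale part and a drift remainder, and bound each piece via the interpolation estimate $\norm{(S_{\nu h}-I)z}\lesssim(\nu h)^\gamma\norm{z}_{\gamma,2}$ together with Lemmas \ref{lem:temporal_Hölder} and \ref{lem:spat_reg}. The paper organises the same ingredients by writing $\hat{\theta}_{\nu,\delta_t}^{\operatorname{disc}}=\theta+(\hat{\mathcal{M}}_{\nu,\delta_t}+\mathcal{R}_{\nu,\delta_t})/\hat{\mathcal{I}}_{\nu,\delta_t}$ and comparing termwise with $\hat{\theta}_\nu=\theta+\mathcal{M}_\nu/\mathcal{I}_\nu$, but the underlying estimates are identical to yours.

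Two remarks. First, you omit a technical point the paper handles explicitly: the initial interval $[0,t_1]$ must be treated separately, since Lemma \ref{lem:spat_reg} carries a $t^{-2\gamma}$ singularity at $t=0$ and $X_0\notin\mathcal{B}_{\gamma,2}$ in general. The standing assumption $\delta_{t,0}\ge C'\nu^{1/(2\beta\gamma)}$ is used precisely here to make the $[0,t_1]$ contribution (your $\mathcal{M}_2,\mathcal{I}_2$ analogues) negligible. Second, you are right to flag the $L^1$ versus in-probability issue at the end: the paper's own proof, after assembling the pieces, divides by $\mathcal{I}_\nu$ and $\hat{\mathcal{I}}_{\nu,\delta_t}$ and concludes only $\smallo_{\prob*{}}(1)$, so your truncation idea actually attempts to close a gap the paper leaves open. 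Making that truncation rigorous is delicate, however, since on the complement event the ratio $\hat{\theta}_\nu-\hat{\theta}_{\nu,\delta_t}^{\operatorname{disc}}$ has no a priori moment bound; a cleaner resolution is to read the proposition as an in-probability statement, which is all that is needed for Theorem \ref{thm:Discret}.
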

		\begin{proof}
		Note that we have under $\prob*[\theta]{}$ the representation
		\begin{equation*}
			X_{t_{k+1}} = S_{\nu\delta_{t,k}} X_{t_k} + \theta\int_{t_k}^{t_{k+1}} S_{\nu(t_{k+1}-s)} F(X_s)\,\D s +  \int_{t_k}^{t_{k+1}}S_{\nu(t_{k+1}-s)} B\,\D W_s,
		\end{equation*}
		which implies
		\begin{equation}
			\hat{\theta}_{\nu,\delta_t }^{\operatorname{disc}} =\frac{\sum_{k=1}^{n-1}\iprod*{B\inv F(X_{t_k})}{B\inv \left[\theta \int_{t_k}^{t_{k+1}}S_{\nu(t_{k+1}-s)} F(X_s)\,\D s + \int_{t_k}^{t_{k+1}}S_{\nu(t_{k+1}-s)} B\,\D W_s\right]}}{\sum_{k=1}^{n-1} \delta_{t,k}\norm*{B\inv F(X_{t_k})}^2}.\label{eq:esti_temp_decomp}
		\end{equation}
		This decomposition allows for the representation
		\begin{align*}
			\hat{\theta}_{\nu,\delta_t }^{\operatorname{disc}} &=
			\begin{multlined}[t]
			\frac{\sum_{k=1}^{n-1} \left[\delta_{t,k} \theta \norm*{B\inv F(X_{t_k})}^2 + \int_{t_k}^{t_{k+1}}\iprod*{B S_{\nu(t_{k+1}-s)} B\inv B\inv F(X_{t_k})}{\D W_s}\right]}{\sum_{k=1}^{n-1} \delta_{t,k}\norm*{B\inv F(X_{t_k})}^2} +\\
			\frac{\theta \sum_{k=1}^{n-1}\int_{t_k}^{t_{k+1}}\iprod*{B\inv F(X_{t_k})}{B\inv S_{\nu(t_{k+1}-s)} F(X_s)-B\inv F(X_{t_k})}\,\D s}{\sum_{k=1}^n \delta_{t,k}\norm*{B\inv F(X_{t_k})}^2}
			\end{multlined}\\
			&=\colon\theta + \frac{\hat{\mathcal{M}}_{\nu,\delta_t }+\mathcal{R}_{\nu,\delta_t }}{\hat{\mathcal{I}}_{\nu,\delta_t }}.
		\end{align*}
		For $\delta_t =\smallo(\nu^{1/(2\beta\gamma)})$ we will establish the bounds
		\begin{align*}
			\abs*{\hat{\mathcal{M}}_{\nu,\delta_t }-\mathcal{M}_{\nu}}&\le
			\begin{multlined}[t]
				\left|\sum_{k=1}^{n-1}\int_{t_k}^{t_{k+1}}\iprod*{B S_{\nu(t_{k+1}-s)} B\inv B\inv F(X_{t_k})}{\D W_s} -\right.\\
				\left. \int_{t_1}^T \iprod*{B\inv F(X_t)}{\D W_t}\right| +\abs*{\int_0^{t_1}  \iprod*{B\inv F(X_t)}{\D W_t}}
				\end{multlined}\\
				&=\colon\mathcal{M}_1+ \mathcal{M}_2\overset{!}{=}\mathcal{O}_{\prob*{}}(1),\\
			\abs*{\hat{\mathcal{I}}_{\nu,\delta_t }-\mathcal{I}_\nu}&\le \abs*{\sum_{k=1}^{n-1}\delta_{t,k}\norm*{B\inv F(X_{t_k})}^2-\int_{t_1}^T\norm*{B\inv F(X_t)}^2\,\D t} + \int_0^{t_1} \norm*{B\inv F(X_t)}^2\,\D t\\
			&=\colon \mathcal{I}_1+\mathcal{I}_2\overset{!}{=}\mathcal{O}_{\prob*{}}(\nu^{-1/(2\beta)}),\\
			\abs*{\mathcal{R}_{\nu,\delta_t }}&\le  \theta \sum_{k=1}^{n-1}\int_{t_k}^{t_{k+1}}|\iprod*{B\inv F(X_{t_k})}{B\inv S_{\nu(t_{k+1}-s)} F(X_s)-B\inv F(X_{t_k})}|\,\D s\\
			&\overset{!}{=} \smallo_{\prob*{}}(\nu^{-1/(2\beta)}).
		\end{align*}
		This suffices, as we need
		\begin{align*}
			\sqrt{\phi(\nu)}\abs*{\hat{\theta}_{\nu,\delta_t }^{\operatorname{disc}} -\hat{\theta}_\nu}&\lesssim \nu^{-1/(2\beta)}\abs*{\frac{\hat{\mathcal{M}}_{\nu,\delta_t }+\mathcal{R}_{\nu,\delta_t }}{\hat{\mathcal{I}}_{\nu,\delta_t }}-\frac{\mathcal{M}_\nu}{\mathcal{I}_\nu}}\\
			&\le \frac{\frac{\abs*{\hat{\mathcal{M}}_{\nu,\delta_t }-\mathcal{M}_\nu}+\abs*{\mathcal{R}_{\nu,\delta_t }}}{\nu^{-1/(2\beta)}}}{\frac{\mathcal{I}_\nu}{\nu^{-1/\beta}}-  \frac{\abs*{\hat{\mathcal{I}}_{\nu,\delta_t }-\mathcal{I}_\nu}}{\nu^{-1/\beta}}} + \nu^{-1/(2\beta)}\frac{\mathcal{M}_\nu}{\mathcal{I}_\nu}\frac{\frac{\abs*{\hat{\mathcal{I}}_{\nu,\delta_t }-\mathcal{I}_\nu}}{\nu^{-1/\beta}}}{\frac{\mathcal{I}_\nu}{\nu^{-1/\beta}}-  \frac{\abs*{\hat{\mathcal{I}}_{\nu,\delta_t }-\mathcal{I}_\nu}}{\nu^{-1/\beta}}}\\
			&\overset{!}{=}\smallo_{\prob*{}}(1)
		\end{align*}
		and $\nu^{-1/(2\beta)}\abs*{\mathcal{M}_\nu}/\mathcal{I}_\nu=\mathcal{O}_{\prob*{}}(1)$, $\mathcal{I}_\nu^{-1}=\mathcal{O}_{\prob*{}}(\nu^{1/\beta})$ from Proposition \ref{thm:non-linearrate}. In particular, the choice of $\delta_t=\smallo(\nu^{1/(2\beta\gamma)})$ is required for the bound on $\mathcal{R}_{\nu,\delta_t}$. Taking care of the first time increment $[t_0,t_1]$ individually is needed because $X_{t_1}$ might not have sufficient regularity, as can be seen in Lemmas \ref{lem:spat_reg} and \ref{lem:temporal_Hölder}. If $X_0\in \mathcal{B}_{\gamma,2}$, this separation is not necessary.
		
		\textbf{Step 1:} We start by controlling the difference of the stochastic integrals using Itô's isometry:
		\begin{align*}
			&\EV*{\mathcal{M}_1^2}= \sum_{k=1}^{n-1}\int_{t_k}^{t_{k+1}} \EV*{\norm*{B S_{\nu(t_{k+1}-t)} B^{-2} F(X_{t_k})-B\inv F(X_t)}^2}\,\D t\\
			%\EV*{\abs*{\int_0^T \iprod*{\sum_{k=1}^{n-1} \indicator_{t\in[t_k,t_{k+1}]}B S_{\nu(t_{k+1}-t)} B\inv B\inv F(X_{t_k})}{\D W_t} - \int_0^T \iprod*{B\inv F(X_t)}{\D W_t}}^2}\\
			%&=\int_0^T\EV*{\norm*{\iprod*{\sum_{k=1}^n \indicator_{t\in[t_k,t_{k+1}]}B S_{\nu(t_{k+1}-t)} B\inv B\inv F(X_{t_k})}{\MTemptyplaceholder}-\iprod*{B\inv F(X_t)}{\MTemptyplaceholder}}_{\operatorname{HS}}^2}\,\D t\\
			&\le \norm*{B}^2\sum_{k=1}^{n-1}\int_{t_k}^{t_{k+1}} \EV*{\norm*{S_{\nu(t_{k+1}-t)} B^{-2} F(X_{t_k})-B^{-2} F(X_t)}^2}\,\D t\\
			&\le
			\begin{multlined}[t]
			2\norm*{B}^2 \sum_{k=1}^{n-1}\int_{t_k}^{t_{k+1}} \EV*{\norm*{S_{\nu(t_{k+1}-t)} B^{-2} F(X_{t_k})-S_{\nu(t_{k+1}-t)} B^{-2} F(X_t)}^2} +\\
			 \EV*{\norm*{S_{\nu(t_{k+1}-t)} B^{-2} F(X_t)- B^{-2} F(X_t)}^2}\,\D t
			\end{multlined}\\
			&\le
			\begin{multlined}[t]
				2\norm*{B}^2 \sum_{k=1}^{n-1}\int_{t_k}^{t_{k+1}} \EV*{\norm*{B\inv}^4\norm*{F(X_{t_k})-F(X_t)}^2} +
				 (\nu \delta_{t,k} )^{2\gamma}\EV*{\norm*{B^{-2} F(X_t)}_{\gamma,2}^2}\,\D t
				\end{multlined}\\
			&\lesssim 1+ \delta_t^{2\gamma} \nu^{-1/\beta} + (\nu\delta_t )^{2\gamma} \int_0^T \big(t^{-2\gamma}\nu^{-2\gamma} + \nu^{-2\gamma-1/\beta}\big)\,\D t\lesssim 1+\delta_t ^{2\gamma}\nu^{-1/\beta}
			\lesssim 1,
		\end{align*}
		where we used  $\norm*{S_tz -z}\lesssim t^\gamma \norm*{z}_{\gamma,2}$  and $\delta_{t,k} \nu\le 1$. As we did not assume that $X_0\in \mathcal{B}_{\gamma,2}$, the last step requires treating the first time increment $[0,t_1]$ separately. Furthermore, using Lemma \ref{lem:non-linearexpectation}, \eqref{eq:growthbound_L^2}, $\gamma<(1-1/\beta)/2$ and $\delta_{t,0}\le C\delta_t$, we estimate
		\begin{align*}
			\EV*{\mathcal{M}_2^2}&\le \int_0^{t_1} \EV*{\norm*{B\inv F(X_t)}^2}\,\D t\lesssim \delta_{t,0}^{1-1/\beta}\nu^{-1/\beta} + \delta_{t,0}=\delta_{t,0}^{1-1/\beta}\nu^{-1/\beta} + \delta_{t,0}\\
			&= \smallo(\delta_t^{2\gamma})\nu^{-1/\beta}.
		\end{align*}
		\textbf{Step 2:} For the remainder $\mathcal{R}_{\nu,\delta_t }$ we find, again assuming $\delta_{t,k} \nu\le 1$, that
		\begin{align*}
			\frac{\EV*{\abs*{\mathcal{R}_{\nu,\delta_t }}}}{\theta} &\le  \EV*{\sum_{k=1}^{n-1}\int_{t_k}^{t_{k+1}}\abs*{\iprod*{B\inv F(X_{t_k})}{B\inv S_{\nu(t_{k+1}-t)} F(X_t)-B\inv F(X_{t_k})}}\,\D t}\\
			&\le \sum_{k=1}^{n-1} \int_{t_k}^{t_{k+1}}\EV*{\norm*{B\inv F(X_{t_k})}\norm*{B\inv S_{\nu(t_{k+1}-t)} F(X_t)-B\inv F(X_{t_k})}}\,\D t\\
			%&\le \norm*{B\inv}^2\sum_{k=1}^{n-1} \int_{t_k}^{t_{k+1}} \sqrt{\EV*{\norm*{F(X_{t_k})}^2}}\sqrt{\EV*{\norm*{S_{\nu(t_{k+1}-t)} F(X_t)-F(X_{t_k})}^2}}\,\D t\\
			%&\le \norm*{B\inv}^2 \sum_{k=1}^{n-1} \sqrt{\int_{t_k}^{t_{k+1}} \EV*{\norm*{F(X_{t_k})}^2}\,\D t}\sqrt{\int_{t_k}^{t_{k+1}}\EV*{\norm*{S_{\nu(t_{k+1}-t)}F(X_t)-F(X_{t_k})}^2}\,\D t}\\
			&\lesssim \begin{multlined}[t]
				\left( \sum_{k=1}^{n-1} \int_{t_k}^{t_{k+1}}\EV*{\norm*{B\inv F(X_{t_k})}^2}\,\D t \cdot\right.\\
				\left.\sum_{k=1}^{n-1}\int_{t_k}^{t_{k+1}}\left(\EV*{\norm*{S_{\nu(t_{k+1}-t)} F(X_t)-S_{\nu(t_{k+1}-t)} F(X_{t_k})}^2} +\right.\right.\\
				\left.\left.\EV*{\norm*{S_{\nu(t_{k+1}-t)} F(X_{t_k})-F(X_{t_k})}^2}\right)\,\D t\right)^{1/2}
			\end{multlined}\\
			&\lesssim
			\begin{multlined}[t]
				\left([\nu^{-1/\beta}T^{2-1/\beta} + T]\left(\sum_{k=1}^{n-1} \int_{t_k}^{t_{k+1}} \EV*{\norm*{F(X_t)-F(X_{t_k})}^2}\right.\right.+\\
				\left.\left. (\nu\delta_{t,k} )^{2\gamma}\EV*{\norm*{F(X_{t_k})}_{\gamma,2}^2}\,\D t\right)\right)^{1/2}
				\end{multlined}\\
				 &\lesssim \nu^{-1/(2\beta)}\left(1+\delta_t ^{2\gamma}\nu^{-1/\beta} + 1 + \delta_t ^{2\gamma}\nu^{-1/\beta}\right)^{1/2}\lesssim \nu^{-1/(2\beta)} + 1.
		\end{align*}
		
		\textbf{Step 3:} Finally, we control the difference of the observed Fisher information. We find
		\begin{align*}
			\EV*{\mathcal{I}_1}&\le \sum_{k=1}^{n-1}\int_{t_k}^{t_{k+1}}\EV*{\abs*{\norm*{B\inv F(X_t)}^2 - \norm*{B\inv F(X_{t_k})}^2}}\,\D t\\
			&= \sum_{k=1}^{n-1}\int_{t_k}^{t_{k+1}}\EV*{\abs*{\norm*{B\inv F(X_t)} + \norm*{B\inv F(X_{t_k})}}\abs*{\norm*{B\inv F(X_t)}-\norm*{B\inv F(X_{t_k})}}}\,\D t\\
			&\le \sqrt{2} \norm*{B\inv}^2 \sum_{k=1}^{n-1}\int_{t_k}^{t_{k+1}}\sqrt{\EV*{\norm*{F(X_t)}^2+\norm*{ F(X_{t_k})}^2}}\sqrt{\EV*{\norm*{F(X_t)-F(X_{t_k})}^2}}\,\D t\\
			&\lesssim T^{1/2-1/(2\beta)}\nu^{-1/(2\beta)}\sum_{k=1}^{n-1} \int_{t_k}^{t_{k+1}} \sqrt{1 + \delta_{t,k} ^{2\gamma}\nu^{-1/\beta}}\,\D t\lesssim\nu^{-1/(2\beta)} + \delta_t^{\gamma}\nu^{-1/\beta}\\
			&\lesssim \nu^{-1/(2\beta)} + 1.
		\end{align*}
		Additionally, we find that
		\begin{align*}
			\EV*{\mathcal{I}_2}&=\EV*{\int_0^{t_1}  \norm*{B\inv F(X_t)}^2\,\D t}\lesssim\delta_{t,0}^{1-1/\beta}\nu^{-1/\beta} + \delta_{t,0}=\delta_{t,0}^{1-1/\beta}\nu^{-1/\beta} + \delta_{t,0}\\
			&= \smallo(\delta_t^{2\gamma})\nu^{-1/\beta}.
		\end{align*}
		Recalling  $\delta_t =\smallo(\nu^{1/(2\beta\gamma)})$ concludes the proof.
		%\textbf{Conclusion:}
		%. Hence, overall, we see that
		%\begin{itemize}
		%	\item $\nu^{1/(2\beta)}\EV*{\abs*{\hat{\mathcal{M}}_{\nu,\delta_t }-\mathcal{M}_\nu}}\lesssim \nu^{1/(2\beta)}(1+ \delta_t ^{\gamma}\nu^{-1/(2\beta)})= \smallo(\nu^{1/(2\beta)})$;
		%	\item $\nu^{1/(2\beta)}\EV*{\abs*{\mathcal{R}_{\nu,\delta_t }}} \lesssim \nu^{1/(2\beta)}(1+ \delta_t ^{\gamma}\nu^{-1/\beta})= \smallo(1)$;
		%	\item $\nu^{1/\beta}\EV*{\abs*{\hat{\mathcal{I}}_{\nu,\delta_t }-\mathcal{I}_\nu}}\lesssim \nu^{1/(2\beta)}(\nu^{-1/(2\beta)} + \delta_t ^{\gamma}\nu^{-1/\beta})=\smallo(\nu^{\beta})$.
		%\end{itemize}
		%This concludes the proof.
		\end{proof}
		
	\subsection{\textbf{Spatial discretization}}
	
		The main idea of proof is to use Corollary \ref{cor:Höldernorm}, controlling the spatial Hölder-norm of $X_t$, and then use approximation of Hölder-continuous functions. Recall that we have $\delta_{t,k}\sim\delta_t$ (with constants independent of $\nu$) for $k=1,\dots,n$.

		\begin{proposition}\label{prop:discret_spat}
			 Grant Assumption \ref{assump:discretize} with $p>1/\tilde{\gamma}$, $\gamma<(1-1/\beta)/2-1/p$, and assume  $\delta_y = \smallo(\nu^{(\gamma+1/(2\beta))/(\tilde{\gamma}-1/p)}\delta_t^{1/(\tilde{\gamma}-1/p)})$. Then we have
			 \begin{equation*}
			 	\EV*{\abs*{\hat{\theta}_{\nu,\delta_t }^{\operatorname{disc}} -\hat{\theta}_{\nu,\delta_t ,\delta_y}^{\operatorname{disc}}}}=\smallo(\phi(\nu)^{-1/2}).
			 \end{equation*}
		\end{proposition}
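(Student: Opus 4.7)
Denote by $N_1, D_1$ and $N_2, D_2$ the numerators and denominators of $\hat{\theta}_{\nu,\delta_t }^{\operatorname{disc}}$ and $\hat{\theta}_{\nu,\delta_t,\delta_y}^{\operatorname{disc}}$, respectively, so that
\begin{equation*}
\hat{\theta}_{\nu,\delta_t}^{\operatorname{disc}}-\hat{\theta}_{\nu,\delta_t,\delta_y}^{\operatorname{disc}}=\frac{N_1-N_2}{D_2}+\frac{N_1}{D_1}\cdot\frac{D_1-D_2}{D_2}.
\end{equation*}
Propositions \ref{thm:non-linearrate} and \ref{prop:discret_temp} together with Lemma \ref{lem:non-linearexpectation} yield $D_1\sim\phi(\nu)\sim\nu^{-1/\beta}$ in probability and $N_1/D_1=\hat{\theta}_{\nu,\delta_t}^{\operatorname{disc}}=\mathcal{O}_{\prob*{}}(1)$. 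The claim thus reduces to
\begin{equation*}
\EV*{|N_1-N_2|}=\smallo(\nu^{-1/(2\beta)})\quad\text{and}\quad\EV*{|D_1-D_2|}=\smallo(\nu^{-1/(2\beta)}),
\end{equation*}
since then $D_2/D_1\xrightarrow{\prob*{}}1$ and dividing by $D_2\gtrsim\nu^{-1/\beta}$ produces the desired $\smallo_{\prob*{}}(\phi(\nu)^{-1/2})$.

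\textbf{Spatial interpolation and Hölder regularity.} The standard error bound for piece-wise linear interpolation on a mesh of width $\delta_y$ gives $\norm*{z-L^{\delta_y}z}_\infty\lesssim\delta_y^{\tilde\gamma-1/p}\norm*{z}_{C^{\tilde\gamma-1/p}}$ for $z\in C^{\tilde\gamma-1/p}$, and hence $\norm*{z-L^{\delta_y}z}\lesssim\delta_y^{\tilde\gamma-1/p}\norm*{z}_{C^{\tilde\gamma-1/p}}$ in $L^2(\Lambda)$. Since $\delta_{t,0}\gtrsim\nu^{1/(2\beta\gamma)}$ makes $t_k^{-2\gamma}\lesssim\nu^{-1/\beta}\sim\phi(\nu)$ for all $k\ge 1$, Corollary \ref{cor:Höldernorm} yields
\begin{equation*}
\EV*{\norm*{X_{t_k}-L^{\delta_y}X_{t_k}}^2}\lesssim\delta_y^{2(\tilde\gamma-1/p)}\nu^{-2\gamma-1/\beta},\quad k=1,\dots,n.
\end{equation*}

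\textbf{Bounding the two differences.} For the denominator, the factorisation $(a^2-b^2)=(a-b)(a+b)$, the Lipschitz property of $F$, Cauchy--Schwarz and Lemma \ref{lem:non-linearexpectation} deliver
\begin{equation*}
\EV*{|D_1-D_2|}\lesssim\sum_{k=1}^{n-1}\delta_{t,k}\sqrt{\EV*{\norm*{F(X_{t_k})}^2+\norm*{F(L^{\delta_y}X_{t_k})}^2}}\sqrt{\EV*{\norm*{X_{t_k}-L^{\delta_y}X_{t_k}}^2}}\lesssim T\delta_y^{\tilde\gamma-1/p}\nu^{-\gamma-1/\beta},
\end{equation*}
which is $\smallo(\nu^{-1/(2\beta)})$ under the hypothesis on $\delta_y$. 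For the numerator I split
\begin{equation*}
N_1-N_2=\sum_{k=1}^{n-1}\iprod*{B\inv[F(X_{t_k})-F(L^{\delta_y}X_{t_k})]}{B\inv\Delta_k}+\sum_{k=1}^{n-1}\iprod*{B\inv F(L^{\delta_y}X_{t_k})}{B\inv(\Delta_k-\tilde\Delta_k)},
\end{equation*}
with $\Delta_k\coloneqq X_{t_{k+1}}-S_{\nu\delta_{t,k}}X_{t_k}$ and $\tilde\Delta_k\coloneqq L^{\delta_y}X_{t_{k+1}}-S_{\nu\delta_{t,k}}L^{\delta_y}X_{t_k}$. Itô's isometry combined with $\norm*{S_{\nu u}B}_{\operatorname{HS}}^2\lesssim(\nu u)^{-1/\beta}$ applied to the variation-of-constants representation of $\Delta_k$ gives $\EV*{\norm*{\Delta_k}^2}\lesssim\delta_t^{1-1/\beta}\nu^{-1/\beta}$, while $L^2$-contraction of $S_t$ yields $\EV*{\norm*{\Delta_k-\tilde\Delta_k}^2}\lesssim\delta_y^{2(\tilde\gamma-1/p)}\nu^{-2\gamma-1/\beta}$. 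Summing the $n\sim T/\delta_t$ increments via Cauchy--Schwarz, the second (and dominating) sum is bounded by
\begin{equation*}
\sum_{k=1}^{n-1}\sqrt{\EV*{\norm*{F(L^{\delta_y}X_{t_k})}^2}}\sqrt{\EV*{\norm*{\Delta_k-\tilde\Delta_k}^2}}\lesssim\frac{T}{\delta_t}\delta_y^{\tilde\gamma-1/p}\nu^{-\gamma-1/\beta},
\end{equation*}
which is exactly $\smallo(\nu^{-1/(2\beta)})$ under the assumed scaling $\delta_y=\smallo(\delta_t^{1/(\tilde\gamma-1/p)}\nu^{(\gamma+1/(2\beta))/(\tilde\gamma-1/p)})$; the first sum is strictly smaller, as $\sqrt{\EV*{\norm*{\Delta_k}^2}}\lesssim\delta_t^{(1-1/\beta)/2}\nu^{-1/(2\beta)}$ contributes an extra $\delta_t^{(1-1/\beta)/2}$ factor.

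\textbf{Main obstacle.} The delicate step is the second summand in $N_1-N_2$: since $L^{\delta_y}$ does not commute with $S_{\nu\delta_{t,k}}$, replacing $X_{t_k}$ by $L^{\delta_y}X_{t_k}$ inside the semi-group wastes any smoothing effect and forces the full cost $\delta_y^{\tilde\gamma-1/p}$ at each of the $\sim T/\delta_t$ time steps. It is this accumulation which produces the additional $\delta_t^{1/(\tilde\gamma-1/p)}$ factor in the assumed scaling of $\delta_y$, and it cannot be improved without finer tools such as discrete heat-kernel commutator estimates. The restriction $\delta_{t,0}\gtrsim\nu^{1/(2\beta\gamma)}$ for the first time step plays a similar role: it is needed precisely to remove the $t^{-2\gamma}$ singularity in Corollary \ref{cor:Höldernorm} at $t=t_1$.
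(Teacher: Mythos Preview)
Your proof is correct and follows essentially the same route as the paper's: both use the interpolation error $\norm{X_t-L^{\delta_y}X_t}\lesssim\delta_y^{\tilde\gamma-1/p}\norm{X_t}_{C^{\tilde\gamma-1/p}}$ together with Corollary~\ref{cor:Höldernorm}, then bound the bilinear differences term by term via Cauchy--Schwarz and sum over the $\sim T/\delta_t$ time steps. The only cosmetic difference is that the paper splits the numerator further into $\hat R_k=\iprod{B^{-1}F(X_{t_k})}{B^{-1}X_{t_{k+1}}}$ and $\hat K_k=\iprod{B^{-1}F(X_{t_k})}{B^{-1}S_{\nu\delta_{t,k}}X_{t_k}}$ and bounds $|\hat R_k-\tilde R_k|$, $|\hat K_k-\tilde K_k|$ separately, whereas you keep $\Delta_k$ intact and exploit $\EV{\norm{\Delta_k}^2}\lesssim\delta_t^{1-1/\beta}\nu^{-1/\beta}$ from the variation-of-constants formula; your first sum is therefore slightly sharper (by a factor $\delta_t^{(1-1/\beta)/2}$), but the dominating second sum is identical in both arguments. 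Your ``Main obstacle'' paragraph also matches the paper's own remark that the commutator $L^{\delta_y}S_{\nu\delta_{t,k}}-S_{\nu\delta_{t,k}}L^{\delta_y}$ is the bottleneck.
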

		\begin{remark}
			We bound the individual deviations of the terms appearing in $\hat{\theta}_{\nu,\delta_t ,\delta_y}^{\operatorname{disc}}$ and $\hat{\theta}_{\nu,\delta_t }^{\operatorname{disc}}$. It might be possible to profit from cancellations in the denominator, similar to \eqref{eq:esti_temp_decomp}, but since the additional quantity $(L^{\delta_y} S_{\nu\delta_{t,k}} - S_{\nu\delta_{t,k}}L^{\delta_y})X_{t_k}$ is difficult to control,  we were not able to establish better spatial approximation rates.
		\end{remark}
		\begin{proof}[Proof of Proposition \ref{prop:discret_spat}]~
		
		\textbf{Step 1 (Preliminaries):} By Assumption \ref{assump:discretize}, we have for any $t\in [0,T]$ the interpolation error bound
		\begin{equation}
			\norm*{(\identity-L^{\delta_y})X_t}\le \delta_y^{\tilde{\gamma}-1/p}\norm*{X_t}_{C^{\tilde{\gamma}-1/p}}\lesssim \delta_y^{\tilde{\gamma}-1/p}\norm*{X_t}_{\gamma,p}.\label{eq:Discret_Spat_Hölder_error}
		\end{equation}
		Combining the bound \eqref{eq:Discret_Spat_Hölder_error} with Corollary \ref{cor:Höldernorm} and $\delta_y^{\tilde{\gamma}-1/p}=\smallo(\nu^{\gamma+1/(2\beta)}\delta_t)$ for our choice of $\delta_y$, we find for $t\ge t_1\ge C \nu^{1/(2\beta\gamma)}$ the estimate
		\begin{align*}
			\EV*{\norm*{L^{\delta_y} X_t - X_t}^2}&\lesssim \delta_y^{2\tilde{\gamma}-2/p}(t^{-2\gamma}\nu^{-2\gamma}+\nu^{-2\gamma-1/\beta})=\smallo(\delta_t^2).
		\end{align*}
		
		\textbf{Step 2:} Recalling the full discretization given by \eqref{eq:Discretization_full}, we define  the quantities
		\begin{align*}
			\hat{J}_{k}&\coloneqq \norm*{B\inv F(X_{t_k})}^2,\quad \tilde{J}_k \coloneqq \norm*{B\inv F(L^{\delta_y} X_{t_k})}^2\\
			\hat{R}_k&\coloneqq \iprod*{B\inv F(X_{t_k})}{B\inv X_{t_{k+1}}},\quad \tilde{R}_k\coloneqq \iprod*{B\inv F(L^{\delta_y} X_{t_k})}{B\inv L^{\delta_y} X_{t_{k+1}}}\\
			\hat{K}_k&\coloneqq \iprod*{B\inv F(X_{t_k})}{B\inv S_{\nu\delta_{t,k}} X_{t_k}}, \quad \tilde{K}_k\coloneqq \iprod*{B\inv F(L^{\delta_y} X_{t_k})}{B\inv S_{\nu\delta_{t,k}} L^{\delta_y} X_{t_k}}.
		\end{align*}
		It suffices to prove that the corresponding approximation differences satisfy
		\begin{equation}
		\sum_{k=1}^n\EV*{\abs*{\hat{A}_k-\tilde{A}_k}}=\smallo(\nu^{-1/(2\beta)}),\quad A=R,K,\quad \sum_{k=1}^n\delta_{t,k}\EV*{\abs*{\hat{J}_k-\tilde{J}_k}}=\smallo(\nu^{-1/\beta}),\label{eq:Claim_Spatial}
		\end{equation}
		because
		\begin{align*}
			\nu^{-1/(2\beta)}\abs*{\hat{\theta}_{\nu,\delta_t }^{\operatorname{disc}}  - \hat{\theta}_{\nu,\delta_t ,\delta_y}^{\operatorname{disc}}}&\le
			\begin{multlined}[t]
				\nu^{-1/(2\beta)}\frac{\sum_{k=1}^{n-1} [\abs{\hat{T}_{t_k} -\tilde{T}_{t_k}} + \abs{\hat{R}_{t_k}-\tilde{R}_{t_k}}]}{\sum_{k=1}^{n-1}\delta_{t,k} \tilde{J}_{t_k}} \\
			 +\nu^{-1/(2\beta)}\frac{\sum_{k,l=1}^{n-1}\abs{\hat{T}_{t_k} + \hat{R}_{t_k}}(t_{l+1}-t_l)\abs{\hat{J}_{t_l}-\tilde{J}_{t_l}}}{\sum_{k,l=1}^{n-1}\delta_{t,k}\delta_{t,l}\hat{J}_{t_k}\tilde{J}_{t_l}}
			 \end{multlined}\\
			&\le
			\begin{multlined}[t]
				\frac{\nu^{1/(2\beta)}  \sum_{k=1}^{n-1} [\abs{\hat{T}_{t_k} -\tilde{T}_{t_k}} + \abs{\hat{R}_{t_k}-\tilde{R}_{t_k}}]}{\nu^{1/\beta}\left(\sum_{k=1}^{n-1}\delta_{t,k} \left(\hat{J}_{t_k}-\abs{\hat{J}_{k_k}-\tilde{J}_{t_k}}\right]\right)} \\
				 +\nu^{-1/(2\beta)}\abs{\hat{\theta}_{\nu,\delta_t }^{\operatorname{disc}}}\frac{\nu^{1/\beta} \sum_{k=1}^{n-1} \delta_{t,k}\abs{\hat{J}_{t_k}-\tilde{J}_{t_k}}}{\nu^{1/\beta}\left(\sum_{k=1}^{n-1} \delta_{t,k} \left[\hat{J}_{t_k} - \abs{\hat{J}_{t_k}-\tilde{J}_{t_k}}\right]\right)}
			\end{multlined}
		\end{align*}
		and recalling the stochastic boundedness of $\nu^{-1/(2\beta)}\hat{\theta}_{\nu,\delta_t }^{\operatorname{disc}} $ as well as $(\sum_{k=1}^{n-1}\delta_{t,k}\hat{J}_k)^{-1}=\mathcal{O}_{\prob*{}}(\nu^{1/\beta})$ from Proposition \ref{prop:discret_temp}.

	From Lemma \ref{lem:spat_reg} it follows that for $k=1,\dots, n-1$
		\begin{equation*}
			\max\left(\EV*{\norm*{B\inv F(X_{t_k})}^2},\EV*{\norm*{B\inv X_{t_{k+1}})}^2},\EV*{\norm*{B\inv S_{\nu\delta_{t,k}}(X_{t_k})}^2}\right)\lesssim \nu^{-1/\beta}.
		\end{equation*}
		and
		\begin{align*}
			\EV*{\norm*{B\inv F(X_{t_k}) - B\inv F(L^{\delta_y}X_{t_k})}^2}&\le \norm*{B\inv}^2M^2 \EV*{\norm*{X_{t_k}-L^{\delta_y}X_{t_k}}^2}=\smallo(\delta_t^2),\\
			\EV*{\norm*{B\inv X_{t_{k+1}}-B\inv L^{\delta_y} X_{t_{k+1}}}^2}&=\smallo(\delta_t^2),\\
			\EV*{\norm*{B\inv S_{\nu \delta_{t,k}}X_{t_k}-B\inv S_{\nu \delta_{t,k}}L^{\delta_y}X_{t_k}}^2}&\le \norm*{B\inv}^2\EV*{\norm*{X_{t_k}-L^{\delta_y}X_{t_k}}^2}=\smallo(\delta_t^2).
		\end{align*}
	The general inequality $\abs*{\iprod*{a}{b}-\iprod*{a'}{b'}}\le \norm*{a}\norm*{b-b'} + \norm*{a-a'}\left(\norm*{b} + \norm*{b'-b}\right)$, $a,b, a',b'\in \mathcal{H}$,
		yields
		\begin{equation*} \max\left(\EV*{\abs*{\hat{J}_k-\tilde{J}_k}},\EV*{\abs*{\hat{R}_k-\tilde{R}_k}},\EV*{\abs*{\hat{K}_k-\tilde{K}_k}}\right)=\smallo(\delta_{t,k}\nu^{-1/(2\beta)})
		\end{equation*}
		for all $k=1,\dots, n-1$. This implies \eqref{eq:Claim_Spatial} and completes the proof.
		\end{proof}

	\section{Technical tool}\label{sec:TechnicalTool}
	
			\begin{proposition}\label{prop:LowerboundNorm}
			Let $(V,\iprod*{}{})$ be an inner product space. Then we have for any $a,b\in V$ and $\alpha\in(0,1)$ that
			\begin{equation*}
				\norm*{a+b}^2\ge \alpha \norm*{a}^2 - \frac{\alpha}{1-\alpha}\norm*{b}^2.
			\end{equation*}
		\end{proposition}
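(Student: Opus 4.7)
The statement is an instance of a weighted Cauchy--Schwarz / Young's inequality, so the plan is essentially to expand $\norm*{a+b}^2$ using the inner product structure and then apply Young's inequality to the cross term with an appropriately chosen weight.

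Concretely, I would start from the identity
\begin{equation*}
\norm*{a+b}^2 = \norm*{a}^2 + 2\iprod*{a}{b} + \norm*{b}^2,
\end{equation*}
and bound $2\iprod*{a}{b}$ from below via the trivial inequality $\norm*{\sqrt{\epsilon}\,a + \tfrac{1}{\sqrt{\epsilon}}\,b}^2 \ge 0$ for $\epsilon > 0$, which expanded gives
\begin{equation*}
2\iprod*{a}{b} \ge -\epsilon\norm*{a}^2 - \tfrac{1}{\epsilon}\norm*{b}^2.
\end{equation*}
Substituting this back yields $\norm*{a+b}^2 \ge (1-\epsilon)\norm*{a}^2 + (1 - 1/\epsilon)\norm*{b}^2$, valid for every $\epsilon > 0$.

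The final step is to optimize the weight: setting $\epsilon = 1-\alpha \in (0,1)$ (allowed since $\alpha \in (0,1)$) produces the coefficients $1-\epsilon = \alpha$ in front of $\norm*{a}^2$ and $1 - 1/(1-\alpha) = -\alpha/(1-\alpha)$ in front of $\norm*{b}^2$, giving exactly the claimed bound. There is no real obstacle here -- the only choice is the weight $\epsilon$, which is forced by matching the desired constant $\alpha$ in front of $\norm*{a}^2$.
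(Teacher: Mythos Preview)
Your proof is correct and takes essentially the same approach as the paper: both arguments amount to an application of Young's inequality with the weight chosen to match the constant $\alpha$. The paper phrases it as applying $\norm{v+w}^2\le (1+t)\norm{v}^2+(1+t^{-1})\norm{w}^2$ with $v=a+b$, $w=-b$, $t=\alpha^{-1}-1$, while you expand $\norm{a+b}^2$ and bound the cross term; the underlying inequality and the choice of parameter are the same.
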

		\begin{proof}
After division by $\alpha$ this follows directly from Young's inequality $\norm{v+w}^2\le (1+t)\norm{v}^2+(1+t^{-1})\norm{w}^2$ with $v=a+b$, $w=-b$, $t=\alpha^{-1}-1$.
%			The claim follows directly from
%			\begin{align*}
%				0&\le \norm*{\sqrt{1-\alpha}a + \sqrt{1+\frac{\alpha}{1-\alpha}}b}^2\\
%				&= (1-\alpha)\norm*{a}^2 + \left(1+\frac{\alpha}{1-\alpha}\right)\norm*{b}^2 + 2\sqrt{(1-\alpha)\left(1+\frac{\alpha}{1-\alpha}\right)}\iprod*{a}{b}\\
%				&= \norm*{a+b}^2 -\left(\alpha\norm*{a}^2 - \frac{\alpha}{1-\alpha}\norm*{b}^2\right).
%			\end{align*}
\end{proof}

\end{appendix}

\bibliographystyle{imsart-number} % Style BST file (imsart-number.bst or imsart-nameyear.bst)
%\bibliography{../../Reading/literature.bib}       % Bibliography file (usually '*.bib')
\bibliography{literature}
%% or include bibliography directly:
% \begin{thebibliography}{}
% \bibitem{b1}
% \end{thebibliography}

\end{document}